\newtheorem{theorem}{Theorem}[section]
\newtheorem{conjecture}[theorem]{Conjecture}
\newtheorem{corollary}[theorem]{Corollary}
\newtheorem{example}[theorem]{Example}
\newtheorem{lemma}[theorem]{Lemma}
\newtheorem{proposition}[theorem]{Proposition}
\newtheorem{remark}[theorem]{Remark}
\theoremstyle{definition}
\newtheorem{definition}[theorem]{Definition}
\numberwithin{equation}{section}
\DeclareMathOperator{\C}{\mathbb{C}}
\DeclareMathOperator{\ch}{ch}
\DeclareMathOperator{\codim}{codim}
\DeclareMathOperator{\conv}{Conv}
\DeclareMathOperator{\graph}{graph}
\DeclareMathOperator{\Hom}{Hom}
\DeclareMathOperator{\id}{id}
\DeclareMathOperator{\Log}{Log}
\DeclareMathOperator{\MV}{MV}
\DeclareMathOperator{\Newton}{Newton}
\DeclareMathOperator{\Pic}{Pic}
\DeclareMathOperator{\R}{\mathbb{R}}
\DeclareMathOperator{\Residue}{Res}
\DeclareMathOperator{\todd}{Todd}
\DeclareMathOperator{\vol}{Vol}
\DeclareMathOperator{\Z}{\mathbb{Z}}
\newcommand{\add}{\text{add}}
\newcommand{\GS}{\text{GS}}
\newcommand{\poly}{\text{poly}}
\newcommand{\real}{\text{real}}
\newcommand{\simple}{\text{simple}}
\newcommand{\total}{\text{total}}
\newcommand{\trop}{\text{trop}}
\title{The Gamma Conjecture for Tropical Curves in Local Mirror Symmetry}
\author{Junxiao Wang}
\address{\tiny{Beijing International Center for Mathematical Research, No.5 Yiheyuan Road, Haidian, Beijing 100871, China}}
\email{wangjunxiao@bicmr.pku.edu.cn}
\begin{document}

\maketitle

\begin{abstract}
    In \cite{Hosono_2004}, Hosono conjectured the equality between the central charges of A and B sides in local mirror symmetry. In this paper, following the idea of the tropical approach to the central charges as in \cite{A-G-I-S_2018}, we relate a coherent sheaf supported on a holomorphic curve with its mirror Langrangian submanifold in local mirror symmetry through a tropical curve by interpreting their central charges using the combinatorial information of the tropical curve. This proves Hosono's conjecture in this specific case. Furthermore, we put this description in the Gross-Siebert model of local mirror symmetry as in \citep{Gross-Siebert_2014} and confirm the result in \cite{Ruddat-Siebert_2019} that the parameters in the Gross-Siebert model are the canonical coordinates in mirror symmetry.
\end{abstract}

\section{Introduction}
\label{section1}

The Gamma conjecture as stated in \cite{G-G-I_2016} studies the relation between the quantum differential equation of a Fano manifold and its Gamma class. It can be understood as the equality between the central charges of mirror objects as in \cite{Iritani_2009}, where the central charge of a coherent sheaf is the integral of the solution to the quantum differential equation paired with the Gamma class and the central charge of its mirror Lagrangian submanifold is the oscillatory integral on it. For the case of toric Fano varieties, the equality between central charges was proven in \cite{Fang_2018}, which leads to a proof of the Gamma conjecture for toric Fano varieties \cite{Fang-Zhou_2019}.

In this paper, we study the Gamma conjecture of local mirror symmetry, which is understood as the equality between the central charges of mirror objects in local mirror symmetry as proposed in \cite{Hosono_2004}:
\begin{conjecture}[The Gamma Conjecture of Local Mirror Symmetry \cite{Hosono_2004}] Suppose $X$ is a noncompact toric Calabi-Yau variety and let $Y$ be its Hori-Vafa mirror, then for a pair of mirror objects $E\in D^b Coh(X)\leftrightarrow L\in Fuk(Y)$, the central charge $Z_t(E)$ of $E$ is equal to the central charge $C_t(L)$ of $L$, where the central charges $Z_t(E)$ and $C_t(L)$ are defined in Definition \ref{def_centralcharge_period} and Definition \ref{def_centralcharge_chernclass}.
\label{conj_Gamma_conj_LMS}
\end{conjecture}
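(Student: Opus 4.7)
The plan is to associate, to each pair of mirror objects $E \leftrightarrow L$, a tropical curve $\Gamma$ in the tropical base of the SYZ fibration on the Hori-Vafa mirror $Y$, and then to show that both $Z_t(E)$ and $C_t(L)$ are equal to one and the same combinatorial quantity attached to $\Gamma$. This strategy parallels \cite{A-G-I-S_2018} in the compact toric Fano setting, but must be adapted to the local Calabi-Yau case, where the mirror Lagrangians are not sections of the SYZ fibration but are instead supported on tropical curves, so that $\Gamma$ plays a genuinely two-sided role rather than being merely auxiliary.

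First I would construct $\Gamma$ from the coherent sheaf side: if $E$ is supported on a holomorphic curve $C \subset X$, then the amoeba of $C$ under the toric moment map degenerates to a tropical curve in the tropical base, whose vertices, edges, and multiplicities encode $\ch(E)$. The tropical Lagrangian construction then recovers the mirror $L \subset Y$ from $\Gamma$. The intermediate object is a \emph{tropical central charge} $C^{\trop}_t(\Gamma)$, built as an explicit sum over vertices and edges of $\Gamma$ of polylogarithmic integrals, that will be shown to equal both $Z_t(E)$ and $C_t(L)$.

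On the A-side, I would compute $C_t(L)$ as an oscillatory integral on $L$ and apply steepest descent in the spirit of \cite{Fang_2018}: the integral localizes near the critical points of the Hori-Vafa superpotential, the contributions decompose into local pieces indexed by vertices and edges of $\Gamma$, and these pieces assemble into $C^{\trop}_t(\Gamma)$. On the B-side, I would compute $Z_t(E) = \int_X \widehat{\Gamma}_X \cdot \ch(E) \cdot e^{-\omega}$, decompose $\ch(E)$ along the stratification induced by $\Gamma$, and expand the Gamma class in logarithms; the resulting sum matches $C^{\trop}_t(\Gamma)$ provided the Kähler parameters $t$ are identified with the complex parameters via the correct mirror map. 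This identification is exactly where the Gross-Siebert model of \cite{Gross-Siebert_2014} and the result of \cite{Ruddat-Siebert_2019} enter: because the Gross-Siebert parameters are canonical coordinates, no nontrivial mirror-map corrections appear and the matching is literal.

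The principal obstacle I anticipate is the analytic control on the A-side. The Hori-Vafa superpotential has a noncompact critical locus and the Lefschetz thimbles attached to $L$ are generally unbounded, so the asymptotic reduction of $C_t(L)$ to a sum over vertices of $\Gamma$, together with the correct treatment of edges of $\Gamma$ escaping to infinity, requires careful estimates rather than a formal expansion. A secondary but substantial difficulty is the bookkeeping of normalizations, signs, and constants coming from the Gamma function expansion on the B-side so that they line up exactly with the tropical contributions edge by edge and vertex by vertex; this is where the slab and scattering structure of the Gross-Siebert model is indispensable.
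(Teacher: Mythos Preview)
The statement is a \emph{conjecture}; the paper does not prove it in general but establishes the special case where $E=E_C$ is supported on a complete-intersection curve and $L=L^t_{\trop,C}$ is an explicitly constructed Lagrangian (Theorems~\ref{thm_maintheorem_1} and~\ref{thm_maintheorem_2}). Your overall architecture---interpolate both central charges through a tropical curve---matches the paper's strategy for that special case, but your A-side plan contains a genuine error and your B-side plan diverges from what the paper actually does.

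\textbf{A-side.} You describe $C_t(L)$ as an oscillatory integral to be analyzed by steepest descent near critical points of the superpotential, following \cite{Fang_2018}. That is the setup for compact toric Fano varieties, whose mirrors are Landau--Ginzburg models. In local mirror symmetry the Hori--Vafa mirror $Y^t$ is a Calabi--Yau hypersurface in $\C^2\times(\C^*)^n$, and $C_t(L)=\int_L\Omega_{Y^t}$ is a \emph{period integral} of a holomorphic $(n{+}1)$-form (Definition~\ref{def_centralcharge_period}); there is no oscillatory kernel and no localization to critical points. The paper instead decomposes $L$ into pieces $\Gamma_e$, $\Gamma_v$, $\Gamma_a$ over edges, vertices, and ends of the tropical curve and integrates $\Omega$ on each piece directly. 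The genuinely hard step is the contribution $\int_{\Gamma_a}\Omega$ at the ends touching the discriminant: this is handled not by asymptotics but by symplectically deforming $Y^t$ to a simple model near the end (Lemma~\ref{lemma_defomation_near_amoeba}), attaching an auxiliary cycle $\Gamma_{\add}$ whose period is computable in the style of \cite{Ruddat-Siebert_2019}, and isotoping the union to a singular fiber whose period is $(2\pi\sqrt{-1})^{n+1}w(e)$.

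\textbf{B-side and mirror map.} The paper does not expand the Gamma class in logarithms against a stratification. It computes $\ch(E_C)$ via the Koszul resolution, reduces $Z_t(E_C)$ to intersection numbers of divisors on $X_\Sigma$, and then converts those to mixed volumes via Bernstein's theorem (Theorem~\ref{thm_Bernsteinthm}), which in turn equal sums of volumes of mixed cells dual to the tropical curve (Proposition~\ref{prop_mixed_volume}). Your statement that ``no nontrivial mirror-map corrections appear'' is true only in the Gross--Siebert model; in the Hori--Vafa model the mirror map $\check{t}_s(t)$ of (\ref{eq_mir_map}) is nontrivial and appears explicitly in the result (Theorem~\ref{thm_maintheorem_1}). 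The paper carries it through on both sides and checks the match term by term; the Gross--Siebert discussion (Section~\ref{section4}) is a separate confirmation that those parameters are canonical, not a device to avoid the mirror map in the Hori--Vafa computation.
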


Following the philosophy of SYZ conjecture\cite{S-Y-Z_1996}, we regard $X$ and $Y$ as dual torus fibrations on a base space $B$ and relate the pair of mirror objects $E\leftrightarrow L$ with a tropical object $\beta_{\trop}$ on $B$. The central charges can then be interpreted using the combinatorial information of $\beta_{\trop}$. 

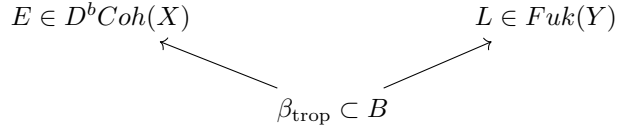
\begin{figure}[htbp!]
\label{fig_tropical_to_mirror_objects}
    \centering
    \begin{tikzcd}
    E\in D^bCoh(X) &                                                    & L\in Fuk(Y) \\
                                          & \beta_{\text{trop}}\subset B \arrow[lu] \arrow[ru] &                                   
    \end{tikzcd}
    \caption{Tropical object lifted to mirror objects}
\end{figure}

We focus on the case where $X=K_{X_\Sigma}$, the canonical bundle of an $n-$dimensional smooth projective toric Fano variety $X_{\Sigma}$, then the base space is $B=\R^n\times\R_{\geq0}$. Now suppose $E_C$ is a coherent sheaf supported on a holomorphic curve $C$ of $X_{\Sigma}$, we construct a tropical curve $\beta_{\trop,C}$ in $B$ with ends on the discriminant locus, and interpret the central charge $Z_t(E_C)$ using the combinatorial information of $\beta_{\trop,C}$.

\begin{theorem}
\label{thm_maintheorem_1}
Suppose $C$ is a holomorphic curve in $X_\Sigma$ which is the complete intersection of a \textit{nice family} of divisors $\{Q_{k}\}, k=1,2,\dots,n-1$ and let $E_C=i_*\mathcal{O}_C(\sum_{k=1}^{n-1}\tilde{Q}_k)$ be the coherent sheaf on $X$ supported on $C$, where $\tilde{Q}_k$ is the pullback divisor in $X$. Then there is a tropical curve $\beta_{\trop,C}$ in $\R^n$ with ends on $\partial G_\trop$, such that 
\begin{equation}
\label{eq_maintheorem_1}
    Z_t(E_C)
    =\sum\limits_{s=1}^{p-n}(2\pi\sqrt{-1})^{n}E_{n+s}(-\log \check{t}_s(t))+\frac{1}{2}(2\pi\sqrt{-1})^{n+1}(\sum\limits_{j=1}^{p}E_j+V),
\end{equation}
where $V$ is the sum of the volumes of the mixed cells dual to the vertices of $\beta_{\trop,C}$, $E_j$ is the sum of the weights of the edges of $\beta_{\trop,C}$ which have ends on the facet of $\partial G_\trop$ dual to $v_j$ and $\check{t}_s(t)$ is the mirror map as in (\ref{eq_mir_map}). Here $G_\trop$ is the unique compact component of the complement of the skeleton of the discriminant locus as defined in (\ref{eq_G_trop}) and $\{v_j\}, j=1,2,\dots,p$ are the generators of the one-cones of the fan $\Sigma$.
\end{theorem}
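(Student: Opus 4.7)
The plan is to compute $Z_t(E_C)$ from its definition in Definition \ref{def_centralcharge_period} as a pairing of the twisted Chern character $\hat{\Gamma}_X \cdot (2\pi\sqrt{-1})^{\deg/2}\ch(E_C)$ against the B-model period basis on the mirror, and then match each term in the resulting expansion with a combinatorial invariant of $\beta_{\trop,C}$. The two simplifying hypotheses — that $C$ is a complete intersection, and that the family $\{Q_k\}$ is nice — make both sides amenable to explicit computation: on the A-side via the Chern classes of the $\tilde{Q}_k$ together with the Gamma class, and on the B-side via the GKZ hypergeometric expansion near the large complex structure limit.

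To carry this out, I would first compute $\ch(E_C)$ for $E_C = i_*\mathcal{O}_C(\sum_k \tilde{Q}_k)$ using the Koszul resolution of $C=\bigcap_k Q_k$ and Grothendieck--Riemann--Roch, obtaining an expression of the form $\prod_k(1-e^{-\tilde{Q}_k})$ together with a $\todd$-correction and a twist $e^{\sum_k \tilde{Q}_k}$. Since $\hat{\Gamma}_X = \prod_j \Gamma(1+D_{v_j})$ expands as $1+\tfrac{1}{2}\sum_j D_{v_j}+\dots$, the product $\hat{\Gamma}_X\cdot\ch(E_C)$ becomes a polynomial in the classes $\tilde{Q}_k$ and $D_{v_j}$. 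I would then construct $\beta_{\trop,C}$ as the tropicalization of $C$, namely the tropical complete intersection $\bigcap_k \trop(Q_k)$; its vertices correspond to mixed cells of the coherent mixed subdivision of $\sum_k \Newton(Q_k)$, and by the Bernstein--Kushnirenko theorem the mixed volume of such a cell equals the local intersection $(\tilde{Q}_1\cdots\tilde{Q}_{n-1})|_C$. Summing over vertices yields $V = \int_C \prod_k \tilde{Q}_k$; analogously, $E_j$ records the multiplicity of $C$ along the toric boundary divisor $D_{v_j}$. Finally, pairing $\hat{\Gamma}_X\cdot\ch(E_C)$ against the GKZ period basis, expanded after the mirror map $t_s\mapsto\check{t}_s(t)$, isolates the three types of terms in (\ref{eq_maintheorem_1}): the single-log solutions in $\log\check{t}_s(t)$ carry coefficient $(2\pi\sqrt{-1})^n E_{n+s}$, while the constant term $\tfrac{1}{2}(2\pi\sqrt{-1})^{n+1}(\sum_j E_j + V)$ comes from pairing the degree-one piece $\tfrac{1}{2}\sum_j D_{v_j}$ of $\hat{\Gamma}_X$ against the top-degree piece of $\ch(E_C)$.

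The hard part will be this final matching step. One must verify that the mirror map $\check{t}_s(t)$ absorbs the instanton corrections precisely so that the logarithmic coefficients agree with the tropical quantities $E_{n+s}$, and that the factor $\tfrac{1}{2}$ in front of $(\sum_j E_j + V)$ arises exactly from the expansion of $\Gamma(1+x)$ at $x=0$. Moreover one must check that the $\R_{\geq 0}$-factor in $B$, corresponding to the Calabi--Yau fiber direction of $K_{X_\Sigma}\to X_\Sigma$, interacts correctly with the discriminant locus $\partial G_\trop$, so that the ends of $\beta_{\trop,C}$ on the facets dual to $v_j$ genuinely assemble into the claimed invariants $E_j$. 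Reconciling the analytic GKZ expansion with the tropical mixed-volume combinatorics — especially in the presence of the mirror map, which is itself a nontrivial hypergeometric series — is where the bulk of the technical work lies.
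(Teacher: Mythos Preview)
Your overall architecture --- compute $\ch(E_C)$ via a Koszul-type resolution, express $Z_t(E_C)$ in terms of intersection numbers of the $[\tilde Q_k]$ and the toric divisors, then convert those intersection numbers into mixed volumes via Bernstein's theorem --- is exactly the route the paper takes. But several of the mechanisms you invoke are misattributed, and one ingredient is missing, so the computation as you describe it would not actually close.

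First, the Koszul resolution must include the zero section $\tilde D_0$: the curve $C$ sits in $X_\Sigma\cong\tilde D_0\subset X$, so $\ch(E_C)=(1-e^{-[\tilde D_0]})\prod_k(1-e^{-[\tilde Q_k]})e^{\sum_k[\tilde Q_k]}$, not just $\prod_k(1-e^{-[\tilde Q_k]})$. This extra factor is what eventually produces $\sum_j[D_j]$ (via $[\tilde D_0]=-\sum_j[\tilde D_j]$ on the Calabi--Yau $X$), and hence the $\sum_j E_j$ term. Second, the coefficient $\tfrac12$ does \emph{not} come from the Gamma class. The expansion $\Gamma(1+x)=1-\gamma x+\cdots$ has no $\tfrac12$ in its linear term, and in any case the paper works with Hosono's form $w(t,\cdot)\ch(E_C)\todd_X$; since $X$ is Calabi--Yau the degree-one part of $\todd_X$ vanishes, and the Gamma/Todd side contributes only its constant term. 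The $\tfrac12$ arises entirely from the degree-$(n+1)$ piece of $\ch(E_C)$, i.e.\ from expanding $(1-e^{-x})=x-\tfrac{x^2}{2}+\cdots$ and $e^{\sum_k[\tilde Q_k]}$. Third, the mirror map does not need to ``absorb instanton corrections'' here: by definition $\log\check t_s(t)=\partial_{\rho_s}w(t,\rho)|_{\rho=0}$ is literally the degree-one coefficient of the cohomology-valued series $w$, so the $\log\check t_s$ factors appear immediately when you pair $w$ against the degree-$n$ piece of $\ch(E_C)$. Finally, your identification $V=\int_C\prod_k\tilde Q_k$ is off by a factor and a self-intersection: the paper shows $2V=(\sum_k[Q_k])\prod_k[Q_k]$ and $E_j=[D_j]\prod_k[Q_k]$, both via Bernstein applied to the appropriate (facet-restricted, in the second case) mixed volumes. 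Once these corrections are in place the matching is a direct substitution, with no further analytic work required.
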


The construction of $\beta_{\trop,C}$ follows from the tropicalization of a holomorphic curve as in \cite{Mikhalkin_2004}. It is dual to a mixed subdivision $T_{\phi}$ of a polytope $G$ which has the same shape as the moment polytope of $X_\Sigma$ with respect to its anticanonical divisor. The right hand side of (\ref{thm_maintheorem_1}) can then be interpreted using the volumes of the mixed cells of $T_\phi$. On the other hand, the central charge $Z_t(E_C)$ can be represented by the intersection number of the divisors, which are also related to the volumes of the mixed cells according to Bernstein's theorem. So both sides of (\ref{eq_maintheorem_1}) are related to the volumes of the mixed cells, which leads to a proof of Theorem \ref{thm_maintheorem_1}.

We then construct a Lagrangian submanifold $L_{\trop,C}^t$ in the Hori-Vafa mirror $Y^t$ from the tropical curve $\beta_{\trop,C}$, whose central charge $C_t(L_{\trop,C}^t)$ can also be interpreted using the combinatorial information of $\beta_{\trop,C}$ and is equal to $Z_t(E_C)$.

\begin{theorem}
\label{thm_maintheorem_2}
For $(t_1,t_2,\dots,t_{p-n})=(t^{\lambda_{n+1}},t^{\lambda_{n+2}},\dots,t^{\lambda_{p}})$ with $\sum_{s=1}^{p-n}\lambda_{n+s}D_{n+s}$ an ample divisor and $t$ a sufficiently small positive real number, there exists a piecewise Lagrangian closed submanifold $L_{\beta_{\trop,C}}^t$ in $Y^t$, such that
\begin{equation}
\label{eq_maintheorem_2}
    C_t(L_{\beta_{\trop,C}}^t)=Z_t(E_C),
\end{equation}
where $D_{n+s}$ is the toric divisor of $X_\Sigma$ dual to $v_{n+s}$.
\end{theorem}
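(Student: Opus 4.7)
The plan is to produce $L_{\beta_{\trop,C}}^t$ as an explicit piecewise Lagrangian assembled from the cells of $\beta_{\trop,C}$, then evaluate $C_t(L_{\beta_{\trop,C}}^t)$ by localizing an oscillatory integral and matching the resulting combinatorial sum with the right-hand side of (\ref{eq_maintheorem_1}); equality with $Z_t(E_C)$ then follows from Theorem \ref{thm_maintheorem_1}. First I would build $L_{\beta_{\trop,C}}^t$ with respect to the SYZ fibration $Y^t\to B$ dual to that of $X$: over each bounded edge $e\subset\beta_{\trop,C}$ I take the subtorus of the fiber perpendicular to the direction of $e$, which sweeps out a Lagrangian cylinder whose radius is controlled by the weight of $e$ and by the power of $t$ determined by the position of $e$ in $G$; at each vertex $v$, dual to a mixed cell of $T_\phi$, I glue in a local Mikhalkin/Matessi-style tropical Lagrangian pair-of-pants whose boundary matches the cylinders along incident edges; and at each unbounded end meeting the facet of $\partial G_\trop$ dual to some $v_j$, I cap off with a vanishing thimble at the corresponding critical fiber of the Hori--Vafa superpotential. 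The output is a piecewise smooth closed Lagrangian whose stratification is in bijection with the cells of $\beta_{\trop,C}$.

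Next I would compute $C_t(L_{\beta_{\trop,C}}^t)=\int_{L_{\beta_{\trop,C}}^t} e^{-W_t}\Omega_t$ by decomposing the integration domain according to this stratification. Along the cylindrical edge pieces the integrand is highly oscillatory in the $S^1$ directions, so integration by parts reduces their contribution to boundary terms supported at vertex and end pieces. At each vertex $v$, stationary-phase asymptotics of $W_t$ on the local pair-of-pants yield a leading contribution proportional to the volume of the mixed cell dual to $v$, assembling into the $V$-term; at each unbounded end above a facet dual to $v_j$, the thimble integral gives a multiple of the incident edge weight, producing the $\sum_j E_j$ summand; and the residual polynomial dependence on ample parameters enters through the radii of the cylinders, which are dictated by tropical distances in $G$ and therefore by $\check{t}_s(t)$, yielding the $\sum_s (2\pi\sqrt{-1})^n E_{n+s}(-\log\check{t}_s(t))$ summand. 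Comparing normalizations, this reproduces exactly (\ref{eq_maintheorem_1}).

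The main obstacle has two parts. First, the gluing: I need to verify that the cylinder, vertex, and thimble pieces can be matched along their common boundaries into a genuinely closed piecewise Lagrangian in $Y^t$, with the balancing condition at each vertex of $\beta_{\trop,C}$ translating exactly into the compatibility of the perpendicular subtori at that vertex. Second, the precise constants: stationary phase yields the factors $(2\pi\sqrt{-1})^n$ and $\tfrac{1}{2}(2\pi\sqrt{-1})^{n+1}$ only if the phase function and Hessian signature are handled carefully at each critical point, and the appearance of the mirror map $\check{t}_s(t)$ rather than the naive Kähler parameter $t_s$ at each end requires absorbing the quantum/instanton corrections built into the Gross--Siebert model. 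Establishing these two normalizations rigorously, rather than the mere power-counting estimate, is what turns the plausible combinatorial picture into the exact identity (\ref{eq_maintheorem_2}).
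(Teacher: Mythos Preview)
Your construction sketch is broadly compatible with the paper's (cylinders over edges, local Lagrangians at vertices, caps at ends), but the evaluation of the central charge rests on a misidentification of what $C_t(L)$ is. In this setting the Hori--Vafa mirror $Y^t$ is a Calabi--Yau hypersurface $uv=W_\Sigma(z)$ in $\mathbb{C}^2\times(\mathbb{C}^*)^n$, and by Definition~\ref{def_centralcharge_period} the central charge is the \emph{period integral} $\int_L\Omega_{Y^t}$ of the holomorphic $(n{+}1)$-form $\Omega=d\log u\wedge d\log z_1\wedge\cdots\wedge d\log z_n$, not an oscillatory integral $\int_L e^{-W_t}\Omega_t$. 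There is no superpotential phase factor, no stationary phase, and the ``critical fibers of the Hori--Vafa superpotential'' are not the right objects; the caps live over the amoeba of $W_\Sigma^{-1}(0)$, not over critical points of $W_\Sigma$. Consequently your mechanism for extracting the $(2\pi\sqrt{-1})^n$ and $\tfrac12(2\pi\sqrt{-1})^{n+1}$ constants via Hessian signatures does not apply.

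The paper instead computes $\int\Omega$ directly on each piece. The edge integrals $\int_{\Gamma_e}\Omega$ are nonzero (equal to $(2\pi\sqrt{-1})^n\log t\cdot w(e)\langle r_e(1)-r_e(0),\xi_e\rangle$), not reduced to boundary terms; the vertex integrals give $(2\pi\sqrt{-1})^{n+1}\vol(\sigma_v)$ by an explicit torus integral. The delicate step is $\int_{\Gamma_a}\Omega$: one adjoins an auxiliary cycle $\Gamma_{\mathrm{add}}$ so that $\Gamma_a\cup\Gamma_{\mathrm{add}}$ is homologous in $Y^t$ to an explicit singular fiber with period $(2\pi\sqrt{-1})^{n+1}w(e)$, and then evaluates $\int_{\Gamma_{\mathrm{add}}}\Omega$ directly. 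The mirror map $\check t_s(t)$ does \emph{not} enter by invoking Gross--Siebert instanton corrections; it arises because the end computation leaves a residual term $\int_{T^n}\log(W_\Sigma(z))\,d\theta$, which picks out exactly the constant term of the power series $\log W_\Sigma$, and this constant equals the function $P(t)$ appearing in the explicit formula $\log\check t_s=\log t_s+(\sum_i v_{n+s,i}-1)P(t)$ obtained from the Frobenius solution~(\ref{eq_mir_map}). Matching this against~(\ref{eq_result2_centralcharge_chernclass}) then uses the toric relation $\sum_j[D_j]=-\sum_s(\sum_i v_{n+s,i}-1)[D_{n+s}]$. Your proposal contains none of this machinery, and stationary-phase asymptotics would at best give leading-order terms, not the exact identity required.
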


The piecewise Lagrangian submanifold $L_{\beta_{\trop,C}}^t$ is constructed along the moment map $\mu_0$ of $\C^2\times(\C^*)^n$ restricted to $Y^t$. It uses the cycles in the fiber of $\Log_t$ induced from the mixed cells as in Definition \ref{def_cycle_in_torus}. For a point of an edge $e$ of $\beta_{\trop,C}$, we take the cycle in its fiber which is induced by the mixed cell dual to $e$, then the union of them is an open Lagrangian submanifold $\Gamma_e$ in $Y^t$. For a vertex $v$ of $\beta_{\trop,C}$, we take a cycle $\Gamma_v$ in its fiber which is induced by the mixed cell dual to $v$, which glues the $\Gamma_e$'s for the edges incident with $v$. For an end $a$ of $\beta_{\trop,C}$, we construct an open Lagrangian submanifold $\Gamma_a$ by deforming the part of $Y^t$ near $a$ to a simple model, which caps $\Gamma_e$ for the edge $e$ with end $a$. Then $L_{\beta_{\trop,C}}^t$ is the union of $\Gamma_e$'s, $\Gamma_v$'s and $\Gamma_a$'s.

\begin{remark}
\begin{itemize}
    \item Our construction away from the ends of the tropical curve matches the constructions in \cite{Matessi_2018} and \cite{Mikhalkin_2019}, while we throw away the smoothness for the purpose of the period computation. It is expected that our piecewise Lagrangian submanifolds could be deformed to Lagrangian submanifolds within the same homology class. 
    \item The construction of the Lagrangian submanifold near the ends also appears in \cite{Mak-Ruddat_2020} and \cite{Hicks_2021}. In \cite{Mak-Ruddat_2020}, they focus on how to make sure the lifting near the end is Lagrangian since the symplectic form they use does not match the symplectic form induced from the fibration, while we do not have this issue. In \cite{Hicks_2021}, their model near the end is already simple since they care about the 2-dimensional case, and their construction near the end matches our construction in the simple model, while we have an additional step to deform it to the simple model.
\end{itemize}
\end{remark}

The central charge $C_t(L_{\beta_{\trop,C}}^t)$ is then the sum of the integrals of $\Omega$ on $\Gamma_e$'s, $\Gamma_v$'s and $\Gamma_a$'s. While the computation of the integrals on $\Gamma_e$'s and $\Gamma_v$'s is canonical, the evaluation of the integral on $\Gamma_a$'s requires some additional work. We add a cycle $\Gamma_{\add}$ to $\Gamma_a$ and isotope $\Gamma_a\cup\Gamma_{\add}$ to a singular fiber within $Y^t$. The integral on $\Gamma_a$ is then the difference between the integrals on $\Gamma_\add$ and the singular fiber, where the evaluation of the integral on $\Gamma_{\add}$ uses the method of the period evaluation as in \cite{Ruddat-Siebert_2019}.

According to Conjecture \ref{conj_Gamma_conj_LMS}, the equality between $C_t(L_{\beta_{\trop,C}}^t)$ and $Z_t(E_C)$ gives
\begin{conjecture}
\label{conj_mirror_objects}
The coherent sheaf $E_C$ and the Lagrangian submanifold $L_{\beta_{\trop,C}}^t$ are mirror to each other under homological mirror symmetry.
\end{conjecture}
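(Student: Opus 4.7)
The plan is to upgrade the numerical equality $C_t(L_{\beta_{\trop,C}}^t)=Z_t(E_C)$ proved in Theorem \ref{thm_maintheorem_2} to a categorical statement, by exploiting the SYZ picture on which the whole construction already rests. The central charge only detects the class in topological $K$-theory (refined by the gamma class), so a separate argument is needed to pin down the actual isomorphism class of the object in $D^b\Fuk(Y^t)$.

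The first step I would take is to set up family Floer theory in the style of Abouzaid for the noncompact Calabi--Yau SYZ fibration $X\to B$, adapted to the Gross--Siebert model already used here. Given the resulting functor $\mathcal{F}\colon D^b\Coh(X)\to D^b\Fuk(Y^t)$, one restricts $E_C=i_*\mathcal{O}_C(\sum_k\tilde{Q}_k)$ fiberwise: its support meets a smooth torus fiber $T_b$ exactly when $b$ lies on the amoeba of $C$, whose spine is $\beta_{\trop,C}$. Thus $\mathcal{F}(E_C)$ is a Lagrangian whose projection to $B$ is $\beta_{\trop,C}$, and away from the vertices and the ends it should be built from the same torus cycles that define $L_{\beta_{\trop,C}}^t$; on open edges this matches the constructions of \cite{Matessi_2018, Mikhalkin_2019}, and at interior vertices it matches the gluing described in Section \ref{section1}.

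The main obstacle, as usual in this circle of ideas, will be the ends of $\beta_{\trop,C}$ lying on $\partial G_\trop$. There the SYZ fibration degenerates along the discriminant locus, so the family Floer transform must incorporate wall-crossing corrections coming from holomorphic discs on fibers approaching the singular locus. The paper's own treatment of $\Gamma_a$ via the auxiliary cycle $\Gamma_\add$ and the Ruddat--Siebert period method already encodes the relevant wall structure at the level of periods; the hope is to lift this to the Floer-theoretic level so that $\mathcal{F}(E_C)$ near $\partial G_\trop$ is Hamiltonian isotopic to the cap $\Gamma_a$. Controlling convergence of the family Floer transform in this noncompact, nonexact setting, and identifying its output with the piecewise construction of $L_{\beta_{\trop,C}}^t$, is the real work.

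As a confirmation, one would compare further invariants: $\mathrm{Ext}^*(E_C,E_C)$ against the self-Floer cohomology $HF^*(L_{\beta_{\trop,C}}^t,L_{\beta_{\trop,C}}^t)$, and the cohomology of $E_C$ twisted by line bundles against the Floer cohomology of $L_{\beta_{\trop,C}}^t$ with SYZ-dual Lagrangian sections. Agreement of Euler characteristics is equivalent to the central-charge statement already proved, so this provides a concrete computational route beyond the numerical match; combined with a generation result for $D^b\Coh(X)$ by line bundles (or by twisted structure sheaves of complete-intersection curves of the type considered here) this would pin down the isomorphism class of $\mathcal{F}(E_C)$ uniquely and conclude the proof.
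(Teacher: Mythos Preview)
The statement you are attempting to prove is labeled a \emph{Conjecture} in the paper, and the paper gives no proof of it. The author derives it only as an \emph{expectation} from Conjecture~\ref{conj_Gamma_conj_LMS} together with the equality $C_t(L_{\beta_{\trop,C}}^t)=Z_t(E_C)$ of Theorem~\ref{thm_maintheorem_2}; there is no further argument. So there is nothing in the paper to compare your proposal against.

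Your outline is a reasonable sketch of how one might eventually attack the conjecture, but as written it is a program, not a proof. The key steps you invoke --- a family Floer functor $\mathcal{F}\colon D^b\Coh(X)\to D^b\Fuk(Y^t)$ in this noncompact, nonexact setting, convergence of that functor near the discriminant, identification of $\mathcal{F}(E_C)$ with the piecewise Lagrangian $L_{\beta_{\trop,C}}^t$ up to Hamiltonian isotopy, and a generation statement for $D^b\Coh(X)$ --- are each substantial open problems or require significant adaptation of existing machinery. None of them is addressed in the paper, and you do not supply arguments for them either. In particular, $L_{\beta_{\trop,C}}^t$ is only a \emph{piecewise} Lagrangian cycle (the paper itself notes in Remark~1.5 that smoothing it to an honest Lagrangian in the same homology class is only expected), so even making sense of it as an object of $\Fuk(Y^t)$ is not immediate. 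Your proposal is therefore best read as a plausible roadmap toward the conjecture rather than a proof.
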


\begin{remark}
\label{remark_tropical_approach_AGIS}
\begin{itemize}
    \item In \cite{Gross-Matessi_2018}, they conjectured the mirror between the line bundles on a toric Calabi-Yau threefold and the Lagrangian sections of its Hori-Vafa mirror. It contains a part of our case when $X_\Sigma$ is a smooth Fano 2-dimensional toric variety. The Lagrangian sections can be regarded as the Lagrangian submanifolds which are lifted from the tropical object which is the whole base space.
    \item The Lagrangian sections and line bundles are also mirror to each other in Batyrev mirrors and their central charges are conjectured to be equal. In \cite{A-G-I-S_2018}, they proposed a tropical approach to the central charge of the Lagrangian section. The tropical object is the whole base space and they evaluate the leading term of the central charge by measuring the difference between the volumes of the tropical amoeba and the actual amoeba. In our case, we evaluate all the central charge, with the subleading terms coming from a delicate evaluation of the lifting $\Gamma_a$ near the end $a$.
    \item In \cite{Hicks_2021}, they constructed a Lagrangian submanifold in $\C\mathbb{P}^2\backslash E$, which is lifted from a tropical curve, and showed that it is mirror to a coherent sheaf supported on an elliptic fiber of $\check{X}_{9111}$, the mirror of $\C\mathbb{P}^2\backslash E$. This gives evidence that the construction could be applied to general Gross-Siebert program.
\end{itemize}
\end{remark}

The same story can be put into the setting of the Gross-Siebert model of local mirror symmetry as described in \cite{Gross-Siebert_2014}. They put $X$ in a toric degeneration $\mathcal{X}$, then its dual toric degeneration $\mathcal{Y}$ contains a family of open sets $Y_{\GS}^{\check{t}}$ which is a modification of the Hori-Vafa mirror $Y^t$ with instanton correction inserted \cite{C-L-L_2012}. We can use the same method to construct a Lagrangian submanifold $L_{\trop,C}^{\check{t}}\subset Y_{\GS}^{\check{t}}$, whose central charge $C_{\check{t}}(L_{\beta_{\trop,C}}^{\check{t}})$ can also be interpreted using the combinatorial information of $\beta_{\trop,C}$.

\begin{theorem}
\label{thm_mainthm_GS}
For suitably chosen $\check{t}$ as in Theorem \ref{thm_maintheorem_2}, there exists a piecewise Lagrangian closed submanifold $L_{\beta_{\trop,C}}^{\check{t}}$ in $Y_\GS^{\check{t}}$, such that
\begin{equation}
\label{eq_maintheorem_GS}
    C_{\check{t}}(L_{\beta_{\trop,C}}^{\check{t}})=\sum_{s=1}^{p-n}(2\pi\sqrt{-1})^{n}E_{n+s}(-\log \check{t}_{s})+\frac{1}{2}(2\pi\sqrt{-1})^{n+1}(\sum_{j=1}^{p}E_j+V),
\end{equation}
where $E_j$ and $V$ are the same as in Theorem \ref{thm_maintheorem_1}.
\end{theorem}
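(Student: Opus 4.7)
The plan is to transport the three-part construction used for Theorem \ref{thm_maintheorem_2} from the Hori-Vafa mirror $Y^t$ to the Gross-Siebert mirror $Y_\GS^{\check{t}}$, and then to rerun the period computation with the instanton corrections turned on. First, I would construct the piecewise Lagrangian submanifold $L_{\beta_{\trop,C}}^{\check{t}}$ as the union of the open Lagrangian pieces $\Gamma_e$ over the edges of $\beta_{\trop,C}$, cycles $\Gamma_v$ over the vertices, and caps $\Gamma_a$ near the ends. Since $Y_\GS^{\check{t}}$ differs from $Y^t$ only by a modification supported near the discriminant locus, the cycle-in-fiber recipes of Definition \ref{def_cycle_in_torus} carry over verbatim for $\Gamma_e$ and $\Gamma_v$; only the neighborhoods of the ends, where the $\Gamma_a$ live, need to be re-set up inside the corrected model, and the gluing check is the same as for Theorem \ref{thm_maintheorem_2} away from the ends.

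Next, I would compute $C_{\check{t}}(L_{\beta_{\trop,C}}^{\check{t}})$ by summing the integrals of $\Omega$ over the three classes of pieces. The $\Gamma_e$ and $\Gamma_v$ contributions produce the mixed-cell volume term $V$ and the interior parts of the $E_j$ terms by the same local computation as in the Hori-Vafa case. For each cap $\Gamma_a$, I would reintroduce the auxiliary cycle $\Gamma_\add$ and isotope $\Gamma_a\cup\Gamma_\add$ within $Y_\GS^{\check{t}}$ to a singular fiber, splitting the integral into (a) a residue-type period on the singular fiber, which is insensitive to the instanton corrections, and (b) the integral of $\Omega$ over $\Gamma_\add$, which is where the Gross-Siebert correction actually enters the answer.

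The main obstacle is step (b): evaluating the integral over $\Gamma_\add$ inside $Y_\GS^{\check{t}}$. Here I would apply the period-evaluation scheme of \cite{Ruddat-Siebert_2019}, which expresses exactly this type of tropical period directly in the Gross-Siebert parameters $\check{t}_s$ rather than in the K\"ahler coordinates $t$. In the Hori-Vafa computation underlying (\ref{eq_maintheorem_1}), the analogous integral produced $\log\check{t}_s(t)$ because one must first pass through the mirror map; in $Y_\GS^{\check{t}}$ the instanton corrections already absorb this transformation, so the integral contributes $\log\check{t}_s$ directly. Combining this with the $\Gamma_e$, $\Gamma_v$, and singular-fiber contributions yields (\ref{eq_maintheorem_GS}), and the comparison with (\ref{eq_maintheorem_1}) is exactly the statement that the $\check{t}_s$ are the canonical coordinates, as predicted in \cite{Ruddat-Siebert_2019}.
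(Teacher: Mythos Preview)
Your overall strategy---rerun the Section~\ref{section3.3} construction inside $Y_\GS^{\check{t}}$ and recompute the period piece by piece---is the paper's strategy, and the $\Gamma_e$, $\Gamma_v$, singular-fiber, and $\Gamma_\add$ decomposition carries over exactly as you describe. Where you diverge is at step~(b): you defer the evaluation of the $\Gamma_\add$ integral to an abstract appeal to \cite{Ruddat-Siebert_2019}, whereas the paper handles it by one concrete observation.

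The point is this. The computation leading to (\ref{eq_integral_on_Gamma_add}) and (\ref{eq_integral_on_Gamma_a}) goes through verbatim with $W_\Sigma$ replaced by $W_{\Sigma,\GS}$, so the only change in the final sum is in the terms
\[
\int_{[\tilde{\sigma}_a(0)]}\log(W_{\Sigma,\GS}(\check{z}))\,d\theta_1\cdots d\theta_n
\quad\text{and}\quad
\int_{[\tilde{\sigma}_a(1)]}\log\bigl((\check{t}_{j-n}\check{z}^{v_j})^{-1}W_{\Sigma,\GS}(\check{z})\bigr)\,d\theta_1\cdots d\theta_n.
\]
By Lemma~\ref{lemma_periodevaluation2} the first integral picks out the constant term (in $\check z$) of $\log(W_{\Sigma,\GS}(\check z))$, and the \emph{normalization condition} defining $W_{\Sigma,\GS}$ says exactly that this constant term has no nonconstant $\check t$-part; hence the integral vanishes. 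The second integral still vanishes by Corollary~\ref{cor_Fanocor1} as in the Hori--Vafa case. So the $C_1(t)$-term in (\ref{eq_LHS_simplified}) is replaced by $0$, and (\ref{eq_main_thm2_formula_need_to_show}) collapses directly to $\sum_s(2\pi)^nE_{n+s}\log\check t_s$ with no mirror-map step. That is the whole proof; no separate input from \cite{Ruddat-Siebert_2019} is required here.

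A minor inaccuracy: $Y_\GS^{\check t}$ does not differ from $Y^t$ ``only by a modification supported near the discriminant locus''---the correction $h(\check t)$ shifts the constant term of the potential globally. This does not affect the construction of $\Gamma_e,\Gamma_v,\Gamma_a$, but it is precisely this global shift, via the normalization condition, that makes the key integral vanish.
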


It turns out that the parameters $t$ for $Y^t$ and $\check{t}$ for $Y_{GS}^{\check{t}}$ are the complex and K\"{a}hler parameters in local mirror symmetry, and the two models of local mirror symmetry are equivalent under the mirror map $\check{t}(t)$.

\begin{theorem}
\label{thm_central_charge_match_mirror_map}
There is a diffeomorphism 
$$
    \Psi_{GS\rightarrow HV}:Y^{\check{t}}_{GS}\rightarrow Y^t
$$
under the mirror map $\check{t}(t)$ when $|t|$ is sufficiently small, such that
\begin{equation*}
    C_t(L_{\beta_{\trop,C}}^t)(t)
    =\int_{\Psi_{GS\rightarrow HV}^{-1}(L_{\beta_{\trop,C}}^t)}d\log\check{u}d\log\check{z}_1\dots d\log\check{z}_n
    =C_{\check{t}}(L_{\beta_{\trop,C}}^{\check{t}})(\check{t}(t)).
\end{equation*}
\end{theorem}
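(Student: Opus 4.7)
The plan is to deduce Theorem \ref{thm_central_charge_match_mirror_map} from Theorems \ref{thm_maintheorem_1}, \ref{thm_maintheorem_2}, and \ref{thm_mainthm_GS} by constructing a diffeomorphism $\Psi_{GS\rightarrow HV}$ that matches the two piecewise Lagrangian submanifolds fiberwise. Chaining the three prior results gives
\[
C_t(L^t_{\beta_{\trop,C}})(t) = Z_t(E_C) = C_{\check{t}(t)}(L^{\check{t}(t)}_{\beta_{\trop,C}}),
\]
where the first equality is Theorem \ref{thm_maintheorem_2} and the second comes from comparing the explicit combinatorial formulas in Theorems \ref{thm_maintheorem_1} and \ref{thm_mainthm_GS} after substituting $\check{t}_s = \check{t}_s(t)$ via the mirror map. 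This already establishes the outer equality; what remains is to produce $\Psi_{GS\rightarrow HV}$ so that the middle integral computes the same quantity by change of variables.

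To construct $\Psi_{GS\rightarrow HV}$, I would follow \cite{C-L-L_2012}: both $Y^t$ and $Y^{\check{t}}_{GS}$ carry SYZ-type torus fibrations over the same tropical base $B=\R^n\times\R_{\geq 0}$, and the Gross-Siebert complex coordinates $(\check{u},\check{z}_1,\dots,\check{z}_n)$ differ from the Hori-Vafa ones by an instanton-corrected coordinate change whose parameter dependence is absorbed by the mirror map $\check{t}(t)$. I would take $\Psi_{GS\rightarrow HV}$ to be the resulting fiber-preserving diffeomorphism, which is well-defined for $|t|$ sufficiently small. The key claim is the identification
\[
\Psi_{GS\rightarrow HV}^{-1}(L^t_{\beta_{\trop,C}}) = L^{\check{t}(t)}_{\beta_{\trop,C}}.
\]
Since both piecewise Lagrangian submanifolds are assembled from the same combinatorial recipe, namely the cycles $\Gamma_e$, $\Gamma_v$, $\Gamma_a$ prescribed by the mixed cells of the subdivision $T_\phi$ and living in fibers of the respective torus fibrations, and since $\Psi_{GS\rightarrow HV}$ is fiber-preserving, the identification can be checked cell by cell. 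Once it holds, the middle integral equals $C_{\check{t}(t)}(L^{\check{t}(t)}_{\beta_{\trop,C}})$ by the definition of the Gross-Siebert central charge and the change-of-variables formula, closing the chain.

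The main obstacle will be controlling the instanton-corrected coordinate change near the ends of $\beta_{\trop,C}$. Away from the ends the fibers of both models are honest tori and the identification on $\Gamma_e$ and $\Gamma_v$ is straightforward, essentially because the defining cycles are monomial in the affine torus coordinates and are therefore preserved by the linear part of $\Psi_{GS\rightarrow HV}$. On the end pieces $\Gamma_a$, however, the Gross-Siebert coordinates incorporate corrections from holomorphic disks, and showing that $\Psi_{GS\rightarrow HV}$ sends the Hori-Vafa model of $\Gamma_a$, built via the deformation to the simple model of Theorem \ref{thm_maintheorem_2}, onto the corresponding Gross-Siebert cell will require precisely the period-evaluation technique of \cite{Ruddat-Siebert_2019} that already enters the proof of Theorem \ref{thm_mainthm_GS}. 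Once this end-wise matching is pinned down, summing the cell-wise contributions yields the full identity.
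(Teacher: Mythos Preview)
Your chain $C_t(L^t_{\beta_{\trop,C}}) = Z_t(E_C) = C_{\check{t}(t)}(L^{\check{t}(t)}_{\beta_{\trop,C}})$ via Theorems \ref{thm_maintheorem_1}, \ref{thm_maintheorem_2}, \ref{thm_mainthm_GS} is correct, and this is exactly how the paper obtains the second equality in the statement. Where your proposal diverges from the paper is in the construction of $\Psi_{GS\rightarrow HV}$ and the handling of the middle integral.

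The paper does not build $\Psi_{GS\rightarrow HV}$ from SYZ fibrations or instanton-corrected coordinates. Instead it writes down an explicit global rescaling
\[
\Psi_{GS\rightarrow HV}(\check{u},\check{v},\check{z}_1,\dots,\check{z}_n)=\bigl(e^{-P(t)/2}\check{u},\,e^{-P(t)/2}\check{v},\,e^{P(t)}\check{z}_1,\dots,e^{P(t)}\check{z}_n\bigr),
\]
where $P(t)$ is the power series appearing in the explicit mirror map formula $\log\check{t}_s(t)=\log t_s+(\sum_i v_{n+s,i}-1)P(t)$. The crucial algebraic point, which your proposal does not touch, is that the normalization condition defining $h(\check{t})$ forces $(1+h(\check{t}(t)))e^{P(t)}=1$; once this is checked, a one-line substitution shows $W_{\Sigma,GS}(\check{t}(t),\check{z})=e^{-P(t)}W_{\Sigma}(t,e^{P(t)}\check{z})$, so the rescaling really does carry $Y^{\check{t}}_{GS}$ onto $Y^t$. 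Since the map is monomial, $\Psi_{GS\rightarrow HV}^*\Omega = d\log\check{u}\,d\log\check{z}_1\cdots d\log\check{z}_n$ exactly, and the first equality is an immediate change of variables. There are no end corrections to worry about.

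Your proposed route has a genuine gap: the cell-by-cell identification $\Psi_{GS\rightarrow HV}^{-1}(L^t_{\beta_{\trop,C}}) = L^{\check{t}(t)}_{\beta_{\trop,C}}$ is neither proved nor needed, and is in fact unlikely to hold as an equality of submanifolds even for the paper's rescaling map, since the two Lagrangians are built with respect to different defining polynomials $W_\Sigma$ and $W_{\Sigma,GS}$ and different deformations near the ends. The paper never asserts this identification; it only needs that the form pulls back correctly, so that the middle integral equals the left-hand side tautologically, and then invokes the outer chain you already established to reach the right-hand side. Your plan to control instanton corrections near the ends via \cite{Ruddat-Siebert_2019} is chasing a difficulty that the explicit rescaling dissolves entirely.
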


This confirms the result in \cite{Ruddat-Siebert_2019} that the parameters $\check{t}$ in the Gross-Siebert model are the canonical coordinates in mirror symmetry.

This paper is organized as follows: Section \ref{section2} gives an introduction to local mirror symmetry and its Gamma conjecture. Section \ref{section3} is the main body of this paper, where we show how to relate $E_C$ and $L^t_{\trop,C}$ through the tropical curve $\beta_{\trop,C}$, and interpret the central charges using the combinatorial information of $\beta_{\trop,C}$. Section \ref{section4} studies the same story in the Gross-Siebert model of local mirror symmetry and we show it is equivalent to the Hori-Vafa mirror through the mirror map.

\section*{Acknowledgement}
I am grateful to Eric Zaslow for bringing up this problem to me. I would like to thank Helge Ruddat and Bernd Siebert for the patient explanation of their work. I would also like to thank Hiroshi Iritani, Cheuk Yu Mak, Ilia Zharkov, Diego Matessi, and Bohan Fang for the helpful discussions and comments.

\section{Local Mirror Symmetry and its Gamma Conjecture}
\label{section2}
Local mirror symmetry studies the Gromov-Witten invariants of a noncompact manifold which are contributed from the holomorphic curves on a compact Fano submanifold of it and its mirror model. It is firstly proposed by Chiang, Klemm, Yau and Zaslow \cite{C-K-Y-Z_1999}. In this section, we introduce local mirror symmetry for the canonical bundle of a smooth toric Fano variety and its Gamma conjecture as proposed by Hosono \cite{Hosono_2004}.

Suppose $X_\Sigma$ is a smooth projective Fano toric variety with $\Sigma$ a smooth Fano fan in $N_{\R}\cong{\R}^n$, whose generators of rays are
$$
v_1=(1,0,\cdots,0), v_2=(0,1,0\cdots,0), \dots,v_{n}=(0,0,\cdots,0,1),
$$
$$
v_{n+1}=(v_{n+1,1},v_{n+1,2},\cdots,v_{n+1,n}),\dots,v_{p}=(v_{p,1},v_{p,2},\cdots,v_{p,n}),
$$
such that the cone generated by $v_1,v_2,\dots,v_n$ is in $\Sigma$.
We denote the toric divisors corresponding to $v_j$ by $D_j$.

Suppose $D=\sum_{j=1}^pa_jD_j$ is a toric divisor, then the polyhedran $G_D$ corresponding to $D$ is a polyhedran in $M_{\R}$
$$
    G_D:=\{m\in M_{\R}\ |\ \langle m,v_j\rangle\geq-a_j\},
$$
where $M=\Hom(N,\mathbb{Z})$ and $M_{\R}=M\otimes\R$. In particular, the polyhedron corresponding to the anticanonical divisor $D_{\text{can}}=D_1+D_2+\cdots+D_p$ of $X_{\Sigma}$ is $G_{\text{can}}:=\{m\in M_{\R}\ |\ \langle m,v_j \rangle\geq -1,j=1,2,\dots,p\}$.

The sections of the line bundle $\mathcal{O}_{X_\Sigma}(D)$ are encoded in the combinatorial information of $G_D$.

\begin{proposition}[\cite{Cox_2011}, Proposition 4.3.3]
\label{prop_globalsection_polytope}
Suppose $D$ is a toric divisor and $G_D$ is the polyhedron corresponding to it, then
$$
    \Gamma(X_\Sigma,\mathcal{O}_{X_\Sigma}(D))=
    \bigoplus_{m\in G_D\cap M}\mathbb{C}\cdot\chi(m),
$$
where $\chi(m)=z^m$ is an element in $\mathbb{C}[M]$ which can be regarded as a function on the open dense torus of $X_\Sigma$.
\end{proposition}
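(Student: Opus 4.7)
The plan is to exploit the torus action on the space of global sections. Since $D$ is a $T_N$-invariant Weil divisor on the toric variety $X_\Sigma$, the algebraic torus $T_N = \spec(\mathbb{C}[M])$ acts on $\mathcal{O}_{X_\Sigma}(D)$, and hence on $\Gamma(X_\Sigma,\mathcal{O}_{X_\Sigma}(D))$. First I would establish that this action decomposes the space of global sections into a direct sum of one-dimensional weight spaces indexed by a subset of characters $\chi(m)$, $m\in M$. For this, I would restrict a section $s$ to the open dense torus $T_N\subset X_\Sigma$: since $D$ is supported on the boundary $X_\Sigma\setminus T_N$, the restriction $s|_{T_N}$ is a regular function on $T_N$, hence a Laurent polynomial $\sum c_m\chi(m)$ in $\mathbb{C}[M]$. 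Thus the problem reduces to identifying which monomials $\chi(m)$ extend to honest global sections of $\mathcal{O}_{X_\Sigma}(D)$, and the question becomes a purely combinatorial one about $M$.

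Next I would compute the order of vanishing of $\chi(m)$ along each toric prime divisor $D_j$. Working on the affine chart $U_{\sigma_j}=\spec(\mathbb{C}[\sigma_j^\vee\cap M])$ associated to the one-cone $\sigma_j=\mathbb{R}_{\geq 0}v_j$, one sees that the local equation of $D_j$ is $z^{u}$ for any $u\in M$ with $\langle u,v_j\rangle=1$, so the valuation along $D_j$ of $\chi(m)$ is exactly $\langle m,v_j\rangle$. Therefore, as a principal Weil divisor on $X_\Sigma$, one has $\mathrm{div}(\chi(m))=\sum_{j=1}^{p}\langle m,v_j\rangle D_j$.

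From here the conclusion is immediate: $\chi(m)$ defines a global section of $\mathcal{O}_{X_\Sigma}(D)$ if and only if $\mathrm{div}(\chi(m))+D\geq 0$ as an effective divisor, which reads
\[
\langle m,v_j\rangle+a_j\geq 0\quad\text{for all }j=1,2,\dots,p,
\]
i.e., precisely $m\in G_D\cap M$. Combining this with the weight-space decomposition from the first step yields the claimed equality $\Gamma(X_\Sigma,\mathcal{O}_{X_\Sigma}(D))=\bigoplus_{m\in G_D\cap M}\mathbb{C}\cdot\chi(m)$.

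The one place demanding care is the first step: showing that a global section is determined by its restriction to $T_N$ (so that the weight-space decomposition is exhaustive and each weight space is at most one-dimensional). This follows from the fact that $X_\Sigma$ is normal and $T_N$ is dense with complement of codimension one, so a section vanishing on $T_N$ vanishes identically. The order-of-vanishing computation in the second step is the technical heart of the argument, but it is a standard local toric calculation once one writes down explicit coordinates on $U_{\sigma_j}$.
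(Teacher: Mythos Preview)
Your argument is correct and is the standard proof of this classical result. Note, however, that the paper does not give its own proof of this proposition: it is stated with a citation to \cite{Cox_2011}, Proposition 4.3.3, and used as background. Your write-up is essentially the argument one finds there, so there is nothing to compare.
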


\begin{corollary}
\label{corollary_polynomial-divisor}
Suppose $f\in\mathbb{C}[M]$ is a polynomial whose Newton polytope is $G_D$ and $Z(f)$ is the zero locus of $f$ in $(\mathbb{C}^*)^n$, then the closure of $Z(f)$ in $X_\Sigma$ is a divisor equivalent to $D$, i.e.,
$$
    [\overline{Z(f)}]=[D].
$$
\end{corollary}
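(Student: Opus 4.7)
The plan is to lift $f$ to a global section of $\mathcal{O}_{X_\Sigma}(D)$ using Proposition \ref{prop_globalsection_polytope} and compute the zero divisor of that section directly. Since the zero scheme of a section of a line bundle represents the class of that bundle, this will identify $[\overline{Z(f)}]$ with $[D]$ once I verify that no spurious components along the toric boundary appear.

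First I would expand $f = \sum_{m \in G_D \cap M} c_m \chi(m)$ and invoke Proposition \ref{prop_globalsection_polytope} to regard the same sum as a section $s_f \in \Gamma(X_\Sigma, \mathcal{O}_{X_\Sigma}(D))$. On the open torus $(\C^*)^n \subset X_\Sigma$ each $\chi(m)$ restricts to a Laurent monomial, so the zero scheme of $s_f$ there coincides with $Z(f)$; hence the Weil divisor of $s_f$ takes the form $\text{div}(s_f) = \overline{Z(f)} + \sum_{j=1}^p b_j D_j$, with each $b_j \geq 0$ the order of vanishing of $s_f$ along the toric prime divisor $D_j$. I would compute each $b_j$ on the affine chart associated to a maximal cone $\sigma$ containing the ray through $v_j$: trivializing $\mathcal{O}_{X_\Sigma}(D)$ there via the vertex $m_\sigma$ of $G_D$ dual to $\sigma$, one finds $b_j = \min\{\langle m, v_j \rangle : c_m \neq 0\} + a_j$, and since the Newton polytope of $f$ equals $G_D$ this minimum equals $-a_j$, forcing $b_j = 0$. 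Therefore $\text{div}(s_f) = \overline{Z(f)}$, which yields $[\overline{Z(f)}] = [\mathcal{O}_{X_\Sigma}(D)] = [D]$ in $\Pic(X_\Sigma)$.

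The main obstacle is the local nonvanishing check in the previous step, which relies squarely on the hypothesis that the Newton polytope of $f$ is all of $G_D$ rather than a proper subpolytope: any strict containment would make some $b_j$ positive and leave an extra boundary contribution in $\text{div}(s_f)$. Verifying that the minimum of $\langle \cdot , v_j \rangle$ on $G_D$ is exactly $-a_j$ uses the standard toric fact that the facet normals of $G_D$ are among the $v_j$'s and that the supporting hyperplane in direction $v_j$ is $\langle \cdot , v_j \rangle = -a_j$; under the stated hypothesis this holds automatically, completing the argument.
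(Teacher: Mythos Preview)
The paper gives no proof of this corollary; it is stated as an immediate consequence of Proposition~\ref{prop_globalsection_polytope} and left at that. Your argument is exactly the standard way to make this precise: lift $f$ to a section of $\mathcal{O}_{X_\Sigma}(D)$, write $\operatorname{div}(s_f)=\overline{Z(f)}+\sum_j b_j D_j$, and check $b_j=0$ for each $j$.

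One small caveat on your last paragraph. The identity $\min\{\langle m,v_j\rangle: m\in G_D\}=-a_j$ does \emph{not} follow from the stated hypothesis alone; it requires that every defining inequality of $G_D$ be tight, which can fail for non--basepoint-free $D$ (e.g.\ $D$ the $(-1)$-curve on the Hirzebruch surface $\mathbb{F}_1$, where $G_D$ is a single point and $\overline{Z(f)}=\emptyset$ while $[D]\neq 0$). So ``holds automatically'' is a bit too strong. That said, the only place the paper invokes this corollary is in \S\ref{section3.2.1}, where the $Q_k$ are explicitly assumed ample; for ample $D$ the support function is strictly convex and every facet inequality is tight, so your computation $b_j=0$ is valid in all the cases that matter. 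Your proof is therefore adequate for the paper's purposes, and more detailed than what the paper itself supplies.
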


The \textit{support function} $\varphi_D$ corresponding to $D$ is a piecewise linear function on $|\Sigma|$,
$$
    \varphi_{D}:|\Sigma|\rightarrow \R,
$$
which is induced by $\varphi_{D}(v_j)=a_j,j=1,2,\dots,p$. In particular, we denote the support function corresponding to $D_{\text{can}}$ by $\varphi_{\text{can}}$.

The Fano condition gives some propositions about the support function and the moment polytope.

\begin{proposition}[\cite{Cox_2011}, Theorem 6.1.14]
\label{prop_Fano-support}
    The support function $\varphi_D$ corresponding to an ample toric divisor $D$ is strictly convex, i.e., $\varphi_D$ is convex and $\varphi_D(x_1+x_2)<\varphi_D(x_1)+\varphi_D(x_2)$ if there are no cones of $\Sigma$ which contain both $x_1$ and $x_2$.
\end{proposition}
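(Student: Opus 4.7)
My plan is to show that for an ample divisor $D$, the fan $\Sigma$ coincides with the normal fan of the polytope $G_D$, from which strict convexity of $\varphi_D$ follows as a standard property of the support function of a polytope with respect to its own normal fan. Recall that for any polytope $P$ with vertices $m_1, \dots, m_r$, the function $\psi_P(v) := \max_i \langle -m_i, v\rangle$ is automatically convex as a pointwise maximum of linear functions, and is strictly convex with respect to the normal fan of $P$ in the sense that the locus where $\psi_P(v) = \langle -m_i, v\rangle$ is exactly the normal cone of $m_i$, and these cones assemble into the normal fan.

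The key step is then to argue that ampleness of $D$ implies $\Sigma$ equals the normal fan of $G_D$, so that $\varphi_D$ coincides with $\psi_{G_D}$. One route is via very ampleness: some positive multiple $nD$ is very ample, giving a $T$-equivariant projective embedding of $X_\Sigma$ whose image is the projective toric variety of $nG_D$, and the fan of this embedding is by construction the normal fan of $nG_D$, which equals the normal fan of $G_D$. Alternatively, one verifies via the Nakai-Moishezon criterion and the toric intersection formula $D \cdot V(\tau) \propto m_{\sigma'} - m_\sigma$, for $V(\tau)$ the invariant curve dual to a wall $\tau = \sigma \cap \sigma'$, that the points $m_\sigma$ defined by $\langle m_\sigma, v_j\rangle = -a_j$ for $v_j \in \sigma$ are pairwise distinct vertices of $G_D$, so $G_D$ has normal fan $\Sigma$.

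Having identified $\varphi_D$ with the support function of $G_D$ with respect to its own normal fan, the strict inequality is immediate. Given $x_1, x_2$ lying in no common cone of $\Sigma$, pick a maximal cone $\sigma \ni x_1+x_2$ and compute
\[
\varphi_D(x_1 + x_2) = \langle -m_\sigma, x_1\rangle + \langle -m_\sigma, x_2\rangle \leq \varphi_D(x_1) + \varphi_D(x_2),
\]
with equality forcing $x_1, x_2$ both into the normal cone of $m_\sigma$, i.e. into $\sigma$ itself, contradicting the hypothesis. The main obstacle is the identification $\Sigma = $ normal fan of $G_D$; once this is in hand the rest is a formal property of polytopal support functions, but the intersection-theoretic computation of $D \cdot V(\tau)$ needed for the Nakai-Moishezon route (or equivalently the verification that the equivariant image of the $|nD|$-embedding is cut out by the polytope $nG_D$) must be handled carefully with the sign conventions chosen for $\varphi_D$ and $G_D$.
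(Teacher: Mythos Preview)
The paper does not supply its own proof of this proposition: it is stated with a citation to \cite{Cox_2011}, Theorem 6.1.14, and used as a black box. So there is nothing in the paper to compare your argument against.

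That said, your sketch is correct and is essentially the standard textbook argument (indeed the one in the cited reference). The identification of $\varphi_D$ with the support function $v \mapsto \max_\sigma \langle -m_\sigma, v\rangle$ of the polytope $G_D$, together with the fact that ampleness forces the $m_\sigma$ to be pairwise distinct vertices (so that $\Sigma$ is the normal fan of $G_D$), is exactly how this is proved in Cox--Little--Schenck. Your final inequality manipulation is clean and the sign conventions match the paper's. The only comment is that what you call an ``obstacle'' --- showing $\Sigma$ equals the normal fan of $G_D$ --- is itself a named theorem in the same reference (ampleness of a torus-invariant Cartier divisor on a complete toric variety is equivalent to $\varphi_D$ being strictly convex, which is equivalent to $G_D$ being full-dimensional with normal fan $\Sigma$), so in practice one simply cites it rather than rederiving the Nakai--Moishezon computation.
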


\begin{corollary}
\label{cor_Fanocor1}
    Suppose $P_{V}$ is the polytope with vertices being $v_1,v_2,\dots,v_p$, then $P_{V}$ is a convex polytope. Furthermore, it is strictly convex at each vertex $v_j$, i.e., there exists a hyperplane $H_j$ such that $P_V\backslash v_j$ is contained in one side of $\R^n$ divided by $H_j$.
\end{corollary}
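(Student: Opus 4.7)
The plan is to exploit the strict convexity of the support function $\varphi_{\text{can}}$ of the anticanonical divisor $D_{\text{can}}=D_1+\cdots+D_p$, which is ample by the Fano hypothesis, so Proposition \ref{prop_Fano-support} applies. The convexity of $P_V$ is automatic since it is defined as a convex hull, so the content is to produce, for each $j$, a hyperplane $H_j$ through $v_j$ that strictly separates $v_j$ from every other $v_i$.

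First I would recall the piecewise-linear description of $\varphi_{\text{can}}$: on every maximal cone $\sigma\in\Sigma$ there is a linear functional $m_\sigma\in M_{\R}$ with $\langle m_\sigma,v\rangle=\varphi_{\text{can}}(v)=1$ for each ray generator $v$ of $\sigma$, and strict convexity implies $\langle m_\sigma,v_i\rangle<1$ whenever $v_i\notin\sigma$. I would then form the averaged functional
\[
m_j \;:=\; \frac{1}{|S_j|}\sum_{\sigma\in S_j} m_\sigma,
\]
where $S_j$ is the set of maximal cones of $\Sigma$ containing $v_j$. By construction $\langle m_j,v_j\rangle=1$, while $\langle m_j,v_i\rangle\leq 1$ for every other $v_i$, with strict inequality exactly when at least one $\sigma\in S_j$ fails to contain $v_i$.

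The main technical step is therefore to rule out the possibility that some $v_i\neq v_j$ lies in every maximal cone containing $v_j$. The plan is to pass to the link of $v_j$ in the quotient $N_{\R}/\R v_j\cong\R^{n-1}$: since $\Sigma$ is complete, the link is a complete fan of dimension $n-1$, and in any complete fan of positive dimension no single ray can lie in every maximal cone---if it did, its opposite ray would lie in the same maximal cone by completeness, forcing a whole line into a strongly convex cone, which is impossible. The edge case $n=1$ is immediate since then the star of $v_j$ is just the single cone $\{tv_j:t\geq 0\}$ and contains no other $v_i$.

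Once $\langle m_j,v_i\rangle<1$ has been established for all $i\neq j$, the hyperplane $H_j:=\{x\in\R^n:\langle m_j,x\rangle=1\}$ passes through $v_j$ while every other $v_i$ sits in the open half-space $\{\langle m_j,\cdot\rangle<1\}$; any convex combination of the $v_i$'s lying on $H_j$ is then forced to be concentrated at $v_j$, so $P_V\cap H_j=\{v_j\}$. This simultaneously shows that each $v_j$ is genuinely a vertex of $P_V$ and that $P_V$ is strictly convex at $v_j$, completing the argument.
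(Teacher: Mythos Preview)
Your proposal is correct and rests on the same input as the paper (strict convexity of $\varphi_{\text{can}}$ from Proposition~\ref{prop_Fano-support}), but the argument proceeds along a genuinely different line. The paper argues by contradiction: implicitly identifying $P_V$ with the sublevel set $\{\varphi_{\text{can}}\le 1\}$, it assumes failure of strict convexity at some $v_j$, extracts two boundary points $x,x'\in\partial P_V$ lying in distinct cones of $\Sigma$ with a convex combination $\lambda x+(1-\lambda)x'$ outside $P_V^\circ$, and observes that $\varphi_{\text{can}}(\lambda x+(1-\lambda)x')\ge 1=\lambda\varphi_{\text{can}}(x)+(1-\lambda)\varphi_{\text{can}}(x')$ violates the strict subadditivity of Proposition~\ref{prop_Fano-support}. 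Your approach is instead constructive: averaging the linear forms $m_\sigma$ over the star $S_j$ produces an explicit functional $m_j$ with $\langle m_j,v_j\rangle=1>\langle m_j,v_i\rangle$ for all $i\ne j$, reducing the problem to the clean combinatorial fact that no other ray generator can lie in every maximal cone containing $v_j$. The paper's route is shorter once one grants $P_V=\{\varphi_{\text{can}}\le 1\}$; yours has the advantage of exhibiting the hyperplane $H_j$ explicitly and of certifying directly that each $v_j$ is an honest vertex of $P_V$. One small point worth making explicit in your link argument: you need the image $\bar v_i$ in $N_\R/\R v_j$ to be nonzero, but the only way it could vanish is $v_i=-v_j$, which is already excluded from every cone of $S_j$ by strong convexity, so the case is harmless.
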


\begin{proof}
Since $D_{\text{can}}=D_1+D_2+\cdots+D_p$ is ample, its support function $\varphi_{\text{can}}$ is strictly convex by Proposition \ref{prop_Fano-support}. Suppose $P_V$ is not strictly convex at some vertex, then there exist two points $x,x'\in\partial P_V$ which are not contained in the same cone of $\Sigma$, and some constant $\lambda\in[0,1]$ such that $\lambda x+(1-\lambda)x'\in N_{\R}\backslash P_V^\circ$ with $P_V^\circ$ being the interior of $P_V$. Then we have that $\varphi_{\text{can}}(\lambda x+(1-\lambda)x')\geq1=\lambda \varphi_{\text{can}}(x)+(1-\lambda)\varphi_{\text{can}}(x')$ since $\varphi_{\text{can}}(x)\geq1$ for $x\in N_{\R}\backslash P_V^\circ$. This contradicts the strict convexity of $\varphi_{\text{can}}$.
\end{proof}

\begin{proposition}[\cite{Cox_2011}, Theorem 8.3.4]
\label{prop_Fano-reflexive}
    If $X_\Sigma$ is a smooth projective Fano toric variety, then $G_{\text{can}}$ is a reflexive polytope, i.e., it has the origin as its unique interior integer point. Conversely, $\Sigma$ is the normal fan of $G_{\text{can}}$.
\end{proposition}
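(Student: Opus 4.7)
The plan is to derive both assertions directly from the Fano hypothesis, which via Proposition~\ref{prop_Fano-support} makes the anticanonical support function $\varphi_{\text{can}}$ strictly convex, together with the completeness of $\Sigma$ that follows from projectivity of $X_\Sigma$.

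For reflexivity, I first observe that the origin lies in the interior of $G_{\text{can}}$ because $\langle 0, v_j \rangle = 0 > -1$ for every $j$. To prove uniqueness, suppose $m \in M \cap \interior(G_{\text{can}})$, so $\langle m, v_j \rangle > -1$ for every $j$; integrality of $m$ and of the $v_j$ then forces $\langle m, v_j \rangle \geq 0$. Since $\Sigma$ is complete, every $n \in N_\R$ lies in some cone of $\Sigma$ and is therefore a nonnegative combination of the ray generators $v_j$, so $\langle m, n \rangle \geq 0$ for all $n \in N_\R$; applying the same bound to $-n$ forces $m = 0$.

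For the normal-fan claim, I will produce, for each maximal cone $\sigma \in \Sigma$, a vertex $m_\sigma$ of $G_{\text{can}}$ whose normal cone is $\sigma$, and then verify that every vertex arises this way. Smoothness of $X_\Sigma$ means the generators $v_{j_1}, \dots, v_{j_n}$ of $\sigma$ form a basis of $N$, so the equations $\langle m_\sigma, v_{j_i} \rangle = -1$ have a unique solution $m_\sigma \in M$. Strict convexity of $\varphi_{\text{can}}$ then upgrades every remaining inequality to $\langle m_\sigma, v_j \rangle > -1$ for $v_j \notin \sigma$, making $m_\sigma$ a vertex of $G_{\text{can}}$ with normal cone exactly $\sigma$. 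Conversely, any vertex of $G_{\text{can}}$ has a full-dimensional normal cone in $N_\R$ which, by completeness of $\Sigma$ and disjointness of relative interiors, must coincide with one of the $\sigma \in \Sigma$. The main obstacle is precisely the strict convexity step: without ampleness of $D_{\text{can}}$, the candidate vertex $m_\sigma$ could saturate additional inequalities $\langle m_\sigma, v_j \rangle = -1$ for rays outside $\sigma$, producing a normal fan strictly coarser than $\Sigma$; this is where the Fano hypothesis enters essentially, via Proposition~\ref{prop_Fano-support}.
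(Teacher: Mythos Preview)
The paper does not prove this proposition; it is quoted as Theorem~8.3.4 of \cite{Cox_2011} without argument, so there is no proof in the paper to compare against.

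Your direct argument is correct. The reflexivity half is clean: integrality forces $\langle m,v_j\rangle\geq 0$, and completeness of $\Sigma$ then pins $m$ to the origin. The normal-fan half follows the standard route---smoothness solves $\langle m_\sigma,v_{j_i}\rangle=-1$ in $M$, strict convexity of $\varphi_{\text{can}}$ from Proposition~\ref{prop_Fano-support} makes the remaining facet inequalities strict, and the resulting injection $\sigma\mapsto m_\sigma$ from maximal cones to vertices, together with both collections covering $N_\R$, identifies the two fans. The only step you leave implicit is that convexity of a piecewise-linear function on a complete fan means each linear piece bounds the function globally from below, which is precisely what converts strict convexity into $\langle m_\sigma,v_j\rangle>-1$ for $v_j\notin\sigma$; this is standard and your invocation of it is acceptable.
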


\begin{corollary}
\label{cor_Fanocor2}
    For $s\in\{1,2,\dots,p-n\}$, $\sum_{i=1}^nv_{n+s,i}-1<0$.
\end{corollary}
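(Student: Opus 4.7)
The plan is to locate the vertex of $G_{\text{can}}$ that corresponds under normal-fan duality to the cone $\sigma_0 := \langle v_1, \ldots, v_n \rangle$ and read off the inequality from its position. By Proposition \ref{prop_Fano-reflexive}, $\Sigma$ is the normal fan of the reflexive polytope $G_{\text{can}}$, and since $\sigma_0 \in \Sigma$ is a maximal cone, it corresponds to a unique vertex $m^{\ast}$ of $G_{\text{can}}$ cut out by the facet equations $\langle m^{\ast}, v_j\rangle = -1$ for $j = 1, \ldots, n$. Because $v_1, \ldots, v_n$ is the standard basis, solving these equations gives $m^{\ast} = (-1, -1, \ldots, -1)$.

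The substance of the proof is then to upgrade the weak inequality $\langle m^{\ast}, v_{n+s}\rangle \geq -1$, which holds because $m^{\ast} \in G_{\text{can}}$, to a strict inequality. Equality would place $m^{\ast}$ on the facet of $G_{\text{can}}$ dual to $v_{n+s}$; but the facets containing $m^{\ast}$ are, under normal-fan duality, exactly those dual to rays of $\Sigma$ lying inside $\sigma_0$. Since $X_\Sigma$ is smooth, the maximal cone $\sigma_0$ is simplicial with extreme rays only $\R_{\geq 0}v_1, \ldots, \R_{\geq 0}v_n$, and the fan axiom that any two cones of $\Sigma$ meet in a common face rules out $\R_{\geq 0}v_{n+s} \subset \sigma_0$ for the distinct ray generator $v_{n+s}$. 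Hence $v_{n+s}$ does not index a facet through $m^{\ast}$, giving the strict inequality $-\sum_{i=1}^n v_{n+s,i} = \langle m^{\ast}, v_{n+s}\rangle > -1$, which rearranges to $\sum_{i=1}^n v_{n+s,i} - 1 < 0$ as required.

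The main obstacle I anticipate is cleanly establishing the strict inequality; pinpointing $m^{\ast}$ is routine, but ruling out $\langle m^{\ast}, v_{n+s}\rangle = -1$ rests on the combinatorial interplay between the fan axioms and the simpliciality of $\sigma_0$. An alternative route sidestepping the normal-fan formalism invokes strict convexity of $\varphi_{\text{can}}$ from Proposition \ref{prop_Fano-support}: on $\sigma_0$ the support function equals the linear form $x \mapsto \sum_{i=1}^n x_i$, and a short argument exploiting strict subadditivity across cones yields $\sum_{i=1}^n v_{n+s,i} < \varphi_{\text{can}}(v_{n+s}) = 1$ whenever $v_{n+s} \notin \sigma_0$.
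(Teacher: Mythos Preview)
Your proof is correct and follows essentially the same route as the paper: both identify $(-1,\ldots,-1)\in G_{\text{can}}$ via the normal-fan correspondence with the maximal cone $\langle v_1,\ldots,v_n\rangle$ and conclude $\langle(-1,\ldots,-1),v_{n+s}\rangle>-1$. The paper's argument is terser and simply asserts the strict inequality without justification, so your explanation via facet--ray duality (and your alternative via strict convexity of $\varphi_{\text{can}}$) actually spells out what the paper leaves implicit.
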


\begin{proof}
Since the cone generated by $v_1,v_2,\dots,v_n$ is in $\Sigma$ and $\Sigma$ is the normal fan of $G_{\text{can}}$ according to Proposition \ref{prop_Fano-reflexive}, we have that $(-1,-1,\cdots,-1)\in G_{\text{can}}$. So we have that $\langle(-1,-1,\dots,-1),v_{n+s}\rangle>-1$, thus $\sum_{i=1}^nv_{n+s,i}-1<0$.
\end{proof}  

Now let $X=K_{X_\Sigma}$ be the canonical bundle of $X_\Sigma$, which is a smooth noncompact Calabi-Yau toric variety determined by a smooth fan $\tilde{\Sigma}$ in $\tilde{N}_{\R}\cong N_{\R}\oplus\R$, whose generators of rays are
$$
\tilde{v}_0=(0,0,\cdots,0,1),\Tilde{v}_1=(1,0,\cdots,0,1), \tilde{v}_2=(0,1,0\cdots,0,1), \dots,\tilde{v}_{n}=(0,0,\cdots,0,1,1),
$$
$$
\tilde{v}_{n+1}=(v_{n+1,1},v_{n+1,2},\cdots,v_{n+1,n},1),\dots,\tilde{v}_{p}=(v_{p,1},v_{p,2},\cdots,v_{p,n},1).
$$
We denote the toric divisors corresponding to $\tilde{v}_j$ by $\tilde{D}_j$.

The projection from $\tilde{N}_{\R}$ to $N_{\R}$ induces a map from $\tilde{\Sigma}$ to $\Sigma$, which induces a map 
$$
    \pi:X\rightarrow X_{\Sigma}.
$$
Then $\tilde{D}_j=\pi^{-1}(D_j)$ for $j=1,2,\cdots,p$ and $\tilde{D}_0$ is the zero section of $\pi$.

The Picard group of $X$ is given by the exact sequence
$$
    0\longrightarrow \tilde{M}\longrightarrow \mathbb{Z}^{p+1}\longrightarrow \Pic(X)\longrightarrow 0,
$$
where $\tilde{M}$ is the dual lattice to $\tilde{N}$ and the map $\tilde{M}\longrightarrow \mathbb{Z}^{p+1}$ is given by $\tilde{m}\mapsto(\langle \tilde{m},\tilde{v}_0\rangle,\langle \tilde{m},\tilde{v}_1\rangle,\dots,\langle \tilde{m},\tilde{v}_p\rangle)$. If we use $e_0,e_1,\dots,e_{p}$ to denote the points $(1,0,\cdots,0),\\(0,1,\cdots,0),\dots,(0,0,\cdots,1)$ in $\mathbb{Z}^{p+1}$, then the images of $e_0,e_1,\dots,e_{p}$ in $\Pic(X)$ are the linear equivalence classes of the toric divisors $[\tilde{D}_0],[\tilde{D}_1],\dots,[\tilde{D}_{p}]$ corresponding to $\tilde{v}_0,\tilde{v}_1,\dots,\tilde{v}_{p}$.
Since $X$ is a smooth toric variety, $\Pic(X)\cong{\Z^{p-n}}$ and it can be generated by $[\tilde{D}_{n+1}],[\tilde{D}_{n+2}],\dots,[\tilde{D}_{p}]$.

The Mori cone $NE(X)$ of $X$ is then a cone in $\R^{p+1}$ whose generators dual to $[\tilde{D}_{n+1}],[\tilde{D}_{n+2}],\dots,[\tilde{D}_{p}]$ are

\begin{equation}
\label{eq_mori_generators}
    \begin{aligned}
        l^{(1)}=&(\sum_{i=1}^{n}v_{n+1,i}-1,-v_{n+1,1},-v_{n+1,2},\cdots,-v_{n+1,n},1,0,\cdots,0),\\
        l^{(2)}=&(\sum_{i=1}^{n}v_{n+2,i}-1,-v_{n+2,1},-v_{n+2,2},\cdots,-v_{n+2,n},0,1,\cdots,0),\\
        \vdots\\
        l^{(p-n)}=&(\sum_{i=1}^{n}v_{p,i}-1,-v_{p,1},-v_{p,2},\cdots,-v_{p,n},0,0,\cdots,0,1).\\
    \end{aligned}
\end{equation}

The Hori-Vafa mirror of $X$ \cite{H-I-V_2000} is then a family of noncompact Calabi-Yau manifolds $Y^t$ in $\mathbb{C}^2\times(\mathbb{C}^*)^n$ which is given by 

\begin{equation}
\label{eq_local_mirror_symmetry}
    Y^t=\{(u,v,z_1,z_2,\cdots,z_n)\in\mathbb{C}^2\times(\mathbb{C}^*)^n\ |\ uv=W_\Sigma(t_1,t_2,\dots,t_{p-n},z_1,z_2,\dots,z_p)\},
\end{equation}
where $t_1,t_2,\cdots,t_{p-n}$ are complex parameters and $W_\Sigma(t_1,t_2,\dots,t_{p-n},z_1,z_2,\dots,z_p)=1+\sum_{i=1}^n z_i+\sum_{s=1}^{p-n}t_sz^{v_{n+s}}$. 

\begin{definition}
\label{def_centralcharge_period}
    Given a cycle $\gamma_t\in H_{n+1}(Y^t,\mathbb{Z})$, the \textit{period integral}, or the $\textit{central charge}$ $\Pi_\gamma(t)$ of $\gamma_t$ is the following integral on $\gamma_t$,
    \begin{equation}
        \Pi_\gamma(t)=\int_{\gamma_t}\Omega_{Y^t},
    \end{equation}
    where $\Omega_{Y^t}=\Residue_{(uv-W_\Sigma(t,z)=0)}(\frac{1}{uv-W_\Sigma(t,z)}dudv\frac{dz_1}{z_1}\cdots\frac{dz_n}{z_n})$ is a holomorphic form on $Y^t$. We also denote the central charge of $\gamma$ by $C_t(\gamma)$.
\end{definition}

\begin{proposition}[\cite{C-K-Y-Z_1999}]
    \label{prop_Picard-Fuchs}
    The period integral $\Pi_\gamma(t)$ satisfies the following system of differential equations:
    \begin{align}
    \label{eq_Picard-Fuchs}
        \mathcal{L}_s(\Pi_\gamma(t))=0,\, s=1,2,\cdots,p-n,
    \end{align}
    where $\mathcal{L}_s=\prod_{l^{(s)}_j>0}\left(\vartheta_j(\vartheta_j-1)\cdots(\vartheta_j-(l^{(s)}_j-1))\right)-t_s\prod_{l^{(s)}_j<0}\left(\vartheta_j(\vartheta_j-1)\cdots(\vartheta_j-(-l^{(s)}_j-1))\right)$ with $\vartheta_j=\sum_{k=1}^{p-n}l^{(k)}_jt_k\frac{\partial}{\partial t_k}$.
\end{proposition}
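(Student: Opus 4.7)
The plan is to realize the operators $\mathcal{L}_s$ as the restriction to the moduli slice of the Gel'fand--Kapranov--Zelevinsky $A$-hypergeometric system attached to $\tilde A = \{\tilde v_0, \dots, \tilde v_p\}$ with lattice of relations generated by the Mori vectors $l^{(1)}, \dots, l^{(p-n)}$. First I would introduce the auxiliary family $W_a(z) = a_0 + \sum_{i=1}^n a_i z_i + \sum_{s=1}^{p-n} a_{n+s} z^{v_{n+s}}$ with independent coefficients $a_0, \dots, a_p$, lift $\gamma_t$ to a small tube $\tilde\gamma_a$ about its image in $\{uv = W_a\} \subset \mathbb{C}^2 \times (\mathbb{C}^*)^n$, and work with the enlarged period $\tilde\Pi_\gamma(a) = \int_{\tilde\gamma_a} \frac{du \wedge dv \wedge d\log z_1 \wedge \cdots \wedge d\log z_n}{uv - W_a(z)}$, which recovers $\Pi_\gamma(t)$ on the slice $a_0 = a_1 = \cdots = a_n = 1$, $a_{n+s} = t_s$ by the definition of the residue form $\Omega_{Y^t}$.

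Next I would derive the ``box'' annihilators. A straightforward induction on $\sum_j n_j$ gives $\prod_j \partial_{a_j}^{n_j}(uv - W_a)^{-1} = (\sum_j n_j)!\, z^{\sum_j n_j v_j} (uv - W_a)^{-1 - \sum_j n_j}$; applying this with $n_j = \max(l_j^{(s)},0)$ and with $n_j = \max(-l_j^{(s)},0)$ produces two expressions that coincide pointwise, because the Mori relation $\sum_j l_j^{(s)} \tilde v_j = 0$ forces both $\sum_j l_j^{(s)} v_j = 0$ in $N_{\mathbb{R}}$ (matching the $z$-monomials) and $\sum_j l_j^{(s)} = 0$ (matching the total differential order, so the powers of $uv - W_a$ agree as well). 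Differentiating under the integral then yields $\bigl(\prod_{l_j^{(s)}>0} \partial_{a_j}^{l_j^{(s)}} - \prod_{l_j^{(s)}<0} \partial_{a_j}^{-l_j^{(s)}}\bigr) \tilde\Pi_\gamma(a) = 0$ with no boundary contributions.

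To descend to the $t$-variables and obtain $\mathcal{L}_s$, I would invoke the torus scalings $(z_i, a_j) \mapsto (\lambda_i z_i, \lambda_i^{-v_{j,i}} a_j)$ together with the overall rescaling $a_j \mapsto c\,a_j$: these preserve $W_a$ and the holomorphic form, so $\tilde\Pi_\gamma(a)$ depends only on the invariants $\prod_j a_j^{l_j}$ with $l$ in the Mori lattice, which on the chosen slice reduce exactly to $t_s = a_{n+s}$; the chain rule then identifies $a_j \partial_{a_j}$ with $\vartheta_j = \sum_k l_j^{(k)} t_k \partial_{t_k}$ on the period. Multiplying the box identity on the left by $\prod_{l_j^{(s)}>0} a_j^{l_j^{(s)}}$, rewriting this factor as $t_s \prod_{l_j^{(s)}<0} a_j^{-l_j^{(s)}}$ via $t_s = \prod_j a_j^{l_j^{(s)}}$, and using the one-variable identity $a^k \partial_a^k = \vartheta_a(\vartheta_a - 1)\cdots(\vartheta_a - k + 1)$, converts the box operator into precisely $\mathcal{L}_s$. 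The main technical point I expect is the combinatorial verification of the pointwise box identity, where both the geometric relation $\sum_j l_j^{(s)} v_j = 0$ and the height relation $\sum_j l_j^{(s)} = 0$ must be used simultaneously; with that in hand, the reduction to the moduli $t$ and the rearrangement into $\mathcal{L}_s$ are purely algebraic, and convergence of $\tilde\Pi_\gamma(a)$ together with its derivatives is automatic from the compactness of the tube $\tilde\gamma_a$.
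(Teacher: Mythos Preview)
The paper does not supply a proof of this proposition; it is quoted directly from \cite{C-K-Y-Z_1999} and used as a black box. Your argument is the standard GKZ derivation and is essentially what one finds in that reference and in the subsequent literature on local mirror symmetry, so there is nothing to compare on the level of strategy.

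Your outline is correct, but two points deserve a sharper statement. First, the overall rescaling $a_j\mapsto c\,a_j$ does \emph{not} preserve $W_a$; it sends $W_a$ to $cW_a$. What is true, and what you need, is that after the change of variables $(u,v)\mapsto(\sqrt{c}\,u,\sqrt{c}\,v)$ the integrand $\frac{du\,dv\,d\log z}{uv-W_a}$ is invariant, so the period is homogeneous of degree zero; combined with the $n$ torus scalings this gives dependence only on the $p-n$ invariants $t_s=\prod_j a_j^{l_j^{(s)}}$. Second, when you multiply the box operator on the left by $\prod_{l_j^{(s)}>0}a_j^{l_j^{(s)}}$ and pull out $t_s$ in the second term, you should note explicitly that the indices with $l_j^{(s)}>0$ are disjoint from those with $l_j^{(s)}<0$, so the commutation $[a_j,\partial_{a_k}]=0$ for $j\neq k$ justifies the regrouping into products of $a_j^{|l_j^{(s)}|}\partial_{a_j}^{|l_j^{(s)}|}$ before invoking $a^k\partial_a^k=\vartheta_a(\vartheta_a-1)\cdots(\vartheta_a-k+1)$. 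With those two clarifications the argument is complete.
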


The differential equations (\ref{eq_Picard-Fuchs}) can be solved using Frobenius method, i.e., we set
$$
    \Pi_\gamma(t)=\sum_{m_1\geq0,\dots,m_{p-n}\geq0}c(m_1+\rho_1,\dots,m_{p-n}+\rho_{p-n})t_1^{m_1+\rho_1}\cdots t_{p-n}^{m_{p-n}+\rho_{p-n}},
$$
and solve for the coefficients $c(m_1+\rho_1,\dots,m_{p-n}+\rho_{p-n})$. The result is a formal solution 
\begin{align}
\label{eq_solution_to_P-F}
    w(t,\rho)=\sum_{m_1\geq0,\dots,m_{p-n}\geq0}\frac{1}{\prod_{j=0}^p\Gamma(1+\sum_{s=1}^{p-n}l_j^{(s)}(m_s+\rho_s))}t_1^{m_1+\rho_1}\cdots t_{p-n}^{m_{p-n}+\rho_{p-n}}. 
\end{align}

Then the solutions to the Picard-Fuchs equations are given by the partial derivatives of $w(t,\rho)$ with respect to $\rho$ at $\rho=0$. In particular, the solutions
\begin{equation}
    \check{t}_s(t):=\exp(\frac{\partial w(t,\rho)}{\partial\rho_s}|_{\rho=0}),\ s=1,2,\dots,p-n
\label{eq_mir_map}
\end{equation}
are the \textit{mirror map} which transform the complex parameters $t$ to the K\"{a}hler parameters $\check{t}$.

The Gamma Conjecture for local mirror symmetry proposes a matching between the central charges of the mirror objects under homological mirror symmetry, i.e. given a mirror pair $E\leftrightarrow L$ for $E\in D^bCoh(X)$ and $L\in Fuk(Y^t)$, we have that $Z_t(E)=C_t(L)$. We now give the definition of $Z_t(E)$ using the formal solution $w(t,\rho)$.

Let us first introduce the cohomology-valued hypergeometric series induced from $w(t,\rho)$, which is 
\begin{align}
\label{eq_cohohypergeometricseries}
    &w(t_1,\cdots,t_{p-n},\frac{[\tilde{D}_{n+1}]}{2\pi\sqrt{-1}},\cdots,\frac{[\tilde{D}_{p}]}{2\pi\sqrt{-1}})\\
    =&w(t_1,\cdots,t_{p-n},\rho_1,\cdots,\rho_{p-n})|_{\rho_1=\frac{-[\tilde{D}_{n+1}]}{2\pi\sqrt{-1}},\dots,\rho_{p-n}=\frac{-[\tilde{D}_{p}]}{2\pi\sqrt{-1}}}\nonumber\\
    =&\sum_{m_1\geq0,\cdots,m_{p-n}\geq0}\frac{1}{\prod_{j=0}^p\Gamma(1+\sum_{s=1}^{p-n}l^{(s)}_j(m_s-\frac{[\tilde{D}_{n+s}]}{2\pi\sqrt{-1}}))}t_1^{m_1-\frac{[\tilde{D}_{n+1}]}{2\pi\sqrt{-1}}}\cdots t_{p-n}^{m_{p-n}-\frac{[\tilde{D}_{p}]}{2\pi\sqrt{-1}}}.\nonumber
\end{align}

\begin{definition}
\label{def_centralcharge_chernclass}
For an element $E\in D^bCoh(X)$, the \textit{central charge} $Z_t(E)$ of $E$ is defined to be
\begin{equation}
\label{eq_centralcharge_chernclass}
    Z_t(E)=(2\pi\sqrt{-1})^{n+1}\int_{X}w(t_1,\cdots,t_{p-n},\frac{[\tilde{D}_{n+1}]}{2\pi\sqrt{-1}},\cdots,\frac{[\tilde{D}_{p}]}{2\pi\sqrt{-1}})\ch(E)\todd_{X},
\end{equation}
where $\ch(E)$ is the Chern character of $E$, $\todd_{X}$ is the Todd class of $X$, and $\int_{X}$ is understood as taking intersection numbers by regarding $[\tilde{D}_{n+s}]$ and $\ch(E)$ as elements in the Chow group of $X$.
\end{definition}

To describe the Gamma Conjecture for $X$, we need the information of the K-group of $X$. Suppose $K(X)$ and $K^c(X)$ are the K-group of algebraic vector bundles on $X$ and the K-group of the complexes of algebraic vector bundles on $X$ which are exact on $X\backslash\tilde{D}_0$. There is a complete pairing between $K(X)$ and $K^c(X)$ which is 

\begin{align}
    \langle\cdot,\cdot\rangle: K(X)\times K^c(X)&\rightarrow\mathbb{Z},\\
    (F,S)&\mapsto\langle F,S\rangle=\int_{X}\ch(S)\ch(F)\todd_{X}.\nonumber
\end{align}

Now let us take a basis $\{F_1,F_2,\cdots,F_d\}$ which generate the K-group $K(X)$ and $\{S_1,S_2,\cdots,S_d\}$ the dual basis of $K^c(X)$ in the sense that $\langle F_i,S_j\rangle=\delta_{ij}$. Then the Gamma conjecture for local mirror symmetry can be stated as follows:

\begin{conjecture}[The Gamma Conjecture for Local Mirror Symmetry \cite{Hosono_2004}] If we expand the cohomology-valued hypergeometric series (\ref{eq_cohohypergeometricseries}) with respect to $\{\ch(F_k)\}$, which is a basis for $H^{even}_c(X)$, then it is conjectured that 
\begin{equation}
    w(t_1,\cdots,t_{p-n},\frac{[\tilde{D}_{n+1}]}{2\pi\sqrt{-1}},\cdots,\frac{[\tilde{D}_{p}]}{2\pi\sqrt{-1}})=(\frac{1}{2\pi\sqrt{-1}})^{n+1}\sum_{k=1}^d\left((\int_{mir(S_k)}\Omega_{Y^t})\ch(F_k)\right),
\end{equation}
where $mir(S_k)$ is the mirror object of $S_k$ under homological mirror symmetry and $\Omega_{Y^t}$ is the holomorphic form defined in Definition \ref{def_centralcharge_period}. This can also be described as the matching of the central charges of the mirror pair $(E,mir(E))$, i.e.,
\begin{align}
    Z_t(E)&=(2\pi\sqrt{-1})^{n+1}\int_{X}w(t_1,\cdots,t_{p-n},\frac{[\tilde{D}_{n+1}]}{2\pi\sqrt{-1}},\cdots,\frac{[\tilde{D}_{p}]}{2\pi\sqrt{-1}})\ch(E)\todd_{X}\\
    &=\sum_{k=1}^d\left((\int_{mir(S_k)}\Omega_{Y^t})\langle F_k,E \rangle\right)\nonumber\\
    &=\int_{mir(E)}\Omega_{Y^t}\nonumber\\
    &=C_t(mir(E)).\nonumber
\end{align}
\end{conjecture}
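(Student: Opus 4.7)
The plan is to reduce the cohomological identity to the pairwise equality of central charges $Z_t(E) = C_t(mir(E))$ for $E$ ranging over a generating class in $K^c(X)$: pairing the proposed equality with $\ch(E)\todd_{X}$ and integrating over $X$, the left-hand side becomes $Z_t(E)/(2\pi\sqrt{-1})^{n+1}$, while the right-hand side should collapse, by the duality of $\{F_k\}$ and $\{S_k\}$ together with linearity of $mir$ on K-theory, to $(1/(2\pi\sqrt{-1}))^{n+1}\int_{mir(E)}\Omega_{Y^t}$. Hence it would suffice to verify $Z_t(E) = C_t(mir(E))$ on enough $E$'s to span the relevant subspace.

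For these test objects I would work tropically: for each holomorphic curve $C$ obtained as a complete intersection of a \emph{nice family} $\{Q_k\}_{k=1}^{n-1}$ of divisors, take $E = E_C = i_*\mathcal{O}_C(\sum_{k}\tilde{Q}_k)$, tropicalise in the sense of Mikhalkin to obtain a tropical curve $\beta_{\trop,C}\subset\R^n\times\R_{\geq 0}$ dual to a mixed subdivision $T_\phi$ of the polytope $G$ sharing the shape of the moment polytope of $X_\Sigma$, and rewrite $Z_t(E_C)$ as an intersection number using Corollary \ref{corollary_polynomial-divisor}. Bernstein's theorem then turns that intersection number into a signed sum of volumes of the mixed cells of $T_\phi$, matching the combinatorial right-hand side of (\ref{eq_maintheorem_1}); this is the substance of Theorem \ref{thm_maintheorem_1}.

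For the mirror side I would construct a piecewise Lagrangian $L^t_{\beta_{\trop,C}}\subset Y^t$ by gluing three kinds of local pieces over $\beta_{\trop,C}$: over each edge $e$, the cycle $\Gamma_e$ in the $\Log_t$-fibre induced by the mixed cell dual to $e$; over each vertex $v$, a cycle $\Gamma_v$ induced by its dual mixed cell, which glues the incident $\Gamma_e$'s; and over each end $a$, a cap $\Gamma_a$ obtained by first deforming a neighbourhood of $a$ in $Y^t$ to a simple Clifford-type model and then taking the obvious filling there. The integral of $\Omega_{Y^t}$ over $L^t_{\beta_{\trop,C}}$ decomposes as a sum over edges, vertices and ends: the edge and vertex contributions are canonical torus integrals whose total is the volume-of-mixed-cells sum, while each end contribution is computed by adjoining an auxiliary cycle $\Gamma_\add$, isotoping $\Gamma_a\cup\Gamma_\add$ into a singular fibre of $Y^t$, and extracting the $E_{n+s}$-polylogarithm terms via the period evaluation of \cite{Ruddat-Siebert_2019}. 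Summing reproduces the same combinatorial expression (Theorem \ref{thm_maintheorem_2}), giving $C_t(L^t_{\beta_{\trop,C}}) = Z_t(E_C)$ and thus verifying the conjectured identity on the pair $(E_C, L^t_{\beta_{\trop,C}})$.

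The hard part will be the end piece: the deformation to the simple model has to keep $\Gamma_a$ Lagrangian throughout, and the $\Gamma_\add$ integral must be evaluated precisely enough to recover the subleading $E_j$-contributions and $\log\check{t}_s$-terms, rather than only the leading amoeba asymptotics handled in \cite{A-G-I-S_2018}. A secondary obstacle, and the reason the full conjecture should remain open beyond this case, is that the complete-intersection curves $C$ equipped with a nice defining family produce only a subclass of $K^c(X)$; upgrading this pointwise verification into a proof of the full conjecture would require extending the tropical construction to further $E$ and confirming that the resulting $E_C$'s span a sufficient subspace of $K^c(X)$.
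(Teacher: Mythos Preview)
The statement you are attempting to prove is recorded in the paper as a \emph{conjecture}, and the paper does not supply a proof of it. What the paper does prove is precisely the partial verification you outline: for the coherent sheaves $E_C$ supported on complete-intersection curves coming from a nice family of divisors, it constructs the tropical curve $\beta_{\trop,C}$, computes $Z_t(E_C)$ combinatorially via mixed volumes and Bernstein's theorem (Theorem~\ref{thm_maintheorem_1}), builds the piecewise Lagrangian $L^t_{\beta_{\trop,C}}$ from edge, vertex, and end pieces, and evaluates $C_t(L^t_{\beta_{\trop,C}})$ by the $\Gamma_{\add}$/singular-fibre trick to obtain the same expression (Theorem~\ref{thm_maintheorem_2}). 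Your sketch of this machinery is accurate and matches the paper's argument essentially step for step, including your identification of the end contribution as the delicate part.

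There is, however, one logical gap in your framing that goes beyond the spanning issue you flag at the end. Even for the single pair $(E_C, L^t_{\beta_{\trop,C}})$, the equality $Z_t(E_C)=C_t(L^t_{\beta_{\trop,C}})$ does not by itself verify an instance of the Gamma Conjecture, because the conjecture is about the pair $(E,\mathit{mir}(E))$, and the paper does \emph{not} prove that $L^t_{\beta_{\trop,C}}=\mathit{mir}(E_C)$ in the sense of homological mirror symmetry. That identification is itself stated as a separate conjecture (Conjecture~\ref{conj_mirror_objects}), for which the central-charge equality is offered as evidence. So the direction of inference is the reverse of what your reduction needs: rather than knowing the mirror pair and checking the central charges agree, the paper checks the central charges agree and \emph{conjectures} the objects are mirror. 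Your opening reduction---pairing with $\ch(E)\todd_X$ and invoking linearity of $\mathit{mir}$ on K-theory---therefore presupposes exactly the mirror functor whose existence and compatibility with central charges is the content of the conjecture.
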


\begin{remark}
    The Todd class can be written as $\todd(X)=\prod_{j=0}^p\left(\Gamma(1-\frac{[\tilde{D}_j]}{2\pi\sqrt{-1}})\Gamma(1+\frac{[\tilde{D}_j]}{2\pi\sqrt{-1}})\right)$. Plug it into (\ref{eq_centralcharge_chernclass}) and we can see that it matches with the formula as in \cite{A-G-I-S_2018}. The Gamma class that appears in the formula accounts for the name `Gamma Conjecture'. 
\end{remark}

\section{The Gamma Conjecture through Tropical Geometry}
\label{section3}
In this section we show how to relate the mirror objects through tropical geometry in the base space of local mirror symmetry. We start from an introduction to tropical geometry.

\subsection{Background of Tropical Geometry}
\label{section3.1}
The \textit{tropical semiring} $(\mathbb{R}\cup\{-\infty\},\oplus,\odot)$ is the set of $\mathbb{R}\cup\{-\infty\}$ paired with two operations addition $\oplus$ and multiplication $\odot$ given by:
$$
    x\oplus y=\text{max}\{x,y\};\ \ \ x\odot y=x+y,
$$
where $-\infty\oplus x=x$ and $-\infty\odot x = -\infty$.

A \textit{tropical polynomial} is a linear combination of finite tropical monomials
$$
    \phi(x_1,x_2,\dots,x_n)=(a\odot x_1^{i_1}x_2^{i_2}\cdots x_n^{i_n})\oplus(b\odot x_1^{j_1}x_2^{j_2}\cdots x_n^{j_n})\oplus\cdots,
$$
where $a,b,\dots$ are real numbers and $i_1, j_1,\dots$ are integers.

The tropical polynomial is precisely the piecewise-linear convex function on $\mathbb{R}^n$ with integer coefficients, which is
$$
    \phi(x_1,x_2,\dots,x_n)=\text{max}\{a+i_1x_1+i_2x_2+\cdots i_nx_n,b+j_1x_1+j_2x_2+\cdots j_nx_n,\cdots\}.
$$

We define the following items related to a tropical polynomial $\phi$, which we will use in the later sections.
\begin{definition} Given a tropical polynomial $\phi(x_1,\dots,x_n)=\bigoplus_v (a_v\odot x^v)$,
\begin{itemize}
    \item The \textit{tropical hypersurface} $V(\phi)$ is the subset of $\mathbb{R}^n$ where the corresponding convex piecewise-liner function $\phi$ fails to be linear.
    \item The \textit{Newton polytope} $\Delta_{\phi}$ is $\text{Conv}\{v\}$, the convex hull of the integer degrees $v$'s appearing in the monomials..
    \item The \textit{Legendre transform} $\check{\phi}$ of $\phi$ is the piecewise-linear function on $\Delta_\phi$ which is determined by $\check{\phi}(v)=-a_v$.
\end{itemize}
\end{definition}

The Legendre transform $\check{\phi}$ induces a subdivision $T_\phi$ of $\Delta_\phi$ by projecting the affine parts of the convex hull of $\graph(\check{\phi})$ to $\Delta_\phi$. The tropical hypersurface $V(\phi)$ is then related to $(\Delta_\phi,T_\phi)$ through the following proposition.

\begin{proposition}[\cite{Mikhalkin_2004}, Proposition 2.1]
\label{prop_trop_dual_polytope}
Suppose $\phi$ is a tropical polynomial, then the corresponding tropical hypersurface $V(\phi)$ is the polyhedral complex which is dual to the subdivision $T_\phi$ of $\Delta_\phi$.
\end{proposition}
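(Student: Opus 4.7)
The plan is to exhibit for each cell $\sigma$ of $T_\phi$ a polyhedral cell $D(\sigma) \subseteq \R^n$ of codimension $\dim \sigma$, and to show that $\sigma \mapsto D(\sigma)$ is an inclusion-reversing bijection identifying the cells of $V(\phi)$ (together with the open chambers of $\R^n \setminus V(\phi)$, which correspond to the vertices of $T_\phi$) with the cells of $T_\phi$. This is classical Legendre/Fenchel duality; the content is mainly bookkeeping.

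First I would introduce, for each $x \in \R^n$, the \emph{active set} $S(x) = \{v : a_v + \langle v, x\rangle = \phi(x)\}$ of monomial exponents achieving the maximum. The key preliminary step is to show that $\sigma(x) := \conv(S(x))$ is always a cell of $T_\phi$. This is precisely where the Legendre transform enters: by construction of $T_\phi$, its cells are the projections to $\R^n$ of the faces of the convex hull of the lifted points $\{(v, -a_v)\}_v \subset \R^{n+1}$ on which a functional of the form $(w, t) \mapsto \langle w, x\rangle - t$ attains its maximum. Evaluating such a functional at $(v, -a_v)$ yields $a_v + \langle v, x\rangle$, so its maximisers are exactly the elements of $S(x)$, which therefore span a face projecting to $\sigma(x)$.

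Next, for each cell $\sigma \in T_\phi$ with vertices $v_0, \ldots, v_k$, I would define
$$\overline{D}(\sigma) = \{x \in \R^n : S(x) \supseteq \{v_0,\ldots,v_k\}\}, \qquad D(\sigma) = \{x \in \R^n : \sigma(x) = \sigma\}.$$
Then $\overline{D}(\sigma)$ is cut out by the $k$ equations $\langle v_i - v_0, x\rangle = a_{v_0} - a_{v_i}$ (for $i = 1,\ldots,k$) together with the inequalities $a_{v_0} + \langle v_0, x\rangle \geq a_v + \langle v, x\rangle$ for the remaining exponents $v$. Since $v_1 - v_0, \ldots, v_k - v_0$ span the affine hull of $\sigma$, the equations cut out an affine subspace of dimension $n - \dim \sigma$, so $\overline{D}(\sigma)$ is a polyhedron of dimension at most $n - \dim \sigma$. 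For the matching lower bound and non-emptiness, one invokes the identification of $\sigma$ with a face of the lifted convex hull: any supporting functional for that face has the form $(w, t) \mapsto \langle w, x\rangle - t$, and any such $x$ automatically lies in the relative interior of $\overline{D}(\sigma)$.

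Finally I would check the duality axioms. The equivalence $\tau \subseteq \sigma \iff \overline{D}(\sigma) \subseteq \overline{D}(\tau)$ is immediate from the definitions, and the $\{D(\sigma)\}$ partition $\R^n$ into relatively open polyhedral cells whose closures are the $\overline{D}(\sigma)$. Since $\phi$ fails to be linear at $x$ exactly when $|S(x)| \geq 2$, equivalently $\dim \sigma(x) \geq 1$, we conclude
$$V(\phi) = \bigsqcup_{\dim \sigma \geq 1} D(\sigma),$$
while the open $n$-chambers of the complement of $V(\phi)$ correspond to the vertices of $T_\phi$. The step I expect to require the most care is the simultaneous dimension count and non-emptiness for $\overline{D}(\sigma)$: one must know that every cell of $T_\phi$, including its lower-dimensional faces, genuinely arises from a supporting functional whose last coordinate can be normalised to $-1$, which is the content of $T_\phi$ being a \emph{regular} (coherent) subdivision induced by the heights $-a_v$.
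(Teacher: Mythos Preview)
The paper does not supply a proof of this proposition; it is quoted with a citation to \cite{Mikhalkin_2004} and used as a black box. Your outline is the standard regular-subdivision/Legendre-duality argument and is correct; in particular, the point you flag as delicate --- that every cell of $T_\phi$ arises from a supporting functional whose last coordinate can be normalised to $-1$ --- is exactly what it means for $T_\phi$ to be the regular subdivision induced by the height function $\check{\phi}(v) = -a_v$, so it holds by construction.
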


The union of tropical hypersurfaces is again a tropical hypersurface.

\begin{proposition}[\cite{M-S_2015}, Theorem 3.2.5]
\label{prop_unionoftrophypersurface}
    Suppose $\phi_1,\phi_2,\dots,\phi_m$ are tropical polynomials and $\phi=\phi_1\odot\phi_2\odot\cdots\odot\phi_m$. Then the tropical hypersurface $V(\phi)$ is the union of tropical hypersurfaces $\bigcup_{i=1}^mV(\phi_i)$.
\end{proposition}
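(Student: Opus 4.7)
The plan is to prove the two inclusions of the equality $V(\phi)=\bigcup_{i=1}^m V(\phi_i)$ separately, using that tropical multiplication $\odot$ is ordinary addition, so that $\phi=\phi_1+\phi_2+\cdots+\phi_m$ pointwise on $\mathbb{R}^n$ as piecewise-linear convex functions.

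For the easy inclusion $V(\phi)\subseteq\bigcup_i V(\phi_i)$, I would argue by contrapositive: if $x\notin V(\phi_i)$ for every $i$, then each $\phi_i$ is affine on some open neighborhood $U_i$ of $x$, so on the common neighborhood $U=\bigcap_i U_i$ the sum $\phi=\sum_i\phi_i$ is also affine, and hence $x\notin V(\phi)$.

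For the reverse inclusion $\bigcup_i V(\phi_i)\subseteq V(\phi)$, suppose $\phi$ is affine on an open neighborhood $U$ of $x$; I would leverage convexity of each summand. For any segment $[a,b]\subseteq U$ with midpoint $c$, affineness of $\phi$ gives $\phi(c)=\tfrac{1}{2}(\phi(a)+\phi(b))$, while convexity of each $\phi_i$ gives $\phi_i(c)\le\tfrac{1}{2}(\phi_i(a)+\phi_i(b))$. Summing the latter and comparing with the former forces equality in every individual inequality, so each $\phi_i$ is affine on every segment through $U$, hence affine on $U$, so $x\notin V(\phi_i)$ for any $i$.

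Alternatively, one can give a fully combinatorial argument using the distributivity of $\odot$ over $\oplus$: expanding
$$
    \phi=\bigoplus_{(v_1,\dots,v_m)}\bigl((a_{1,v_1}+\cdots+a_{m,v_m})\odot x^{v_1+\cdots+v_m}\bigr),
$$
one sees that $x\in V(\phi)$ iff at least two distinct tuples $(v_1,\dots,v_m)$ achieve the maximum at $x$, which is equivalent to some factor $\phi_i$ having at least two maximizing monomials at $x$, i.e.\ $x\in V(\phi_i)$ for some $i$. I expect the only real subtlety is in the reverse inclusion, namely ruling out the possibility that the nonlinearities of the individual summands cancel each other out; convexity of each $\phi_i$ (as a max of affine functions) is exactly what forbids such cancellation and is the key ingredient.
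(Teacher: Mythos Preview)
The paper does not supply a proof of this proposition; it simply cites it as Theorem~3.2.5 of \cite{M-S_2015}. Your argument is correct and is essentially the standard one: the forward inclusion is immediate since a sum of affine functions is affine, and for the reverse inclusion the convexity of each $\phi_i$ forces equality in Jensen's inequality once the sum is affine, so no cancellation of nonlinearities can occur. The combinatorial alternative you sketch is equally valid and is closer in spirit to the treatment in Maclagan--Sturmfels.
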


\begin{definition}
\label{def_trophypersurface_intersect}
The tropical hypersurfaces $V(\phi_1), V(\phi_2),\dots,V(\phi_m)$ are called \textit{transversely intersect} if for any cell $\tau$ of $\bigcup_{i=1}^mV(\phi_i)$ and $V(\phi_{i_1}),V(\phi_{i_2}),\dots,V(\phi_{i_k})$ which contain $\tau$,
$$
\codim \tau=\codim \sigma_{i_1}+\codim \sigma_{i_2}+\cdots+\codim \sigma_{i_k},
$$
where $\sigma_{i_s}$ is the cell of $V(\phi_{i_s})$ which contains $\tau$ with the smallest dimension.
\end{definition}

We give a more detailed analysis of the cells of $V(\phi)$. Note that $V(\phi)$ is dual to $(\Delta_\phi, T_{\phi})$, where $\Delta_\phi=\Delta_{\phi_1}+\Delta_{\phi_2}+\cdots+\Delta_{\phi_m}$, the Minkowski sum of $\Delta_{\phi_1},\Delta_{\phi_2},\dots,\Delta_{\phi_m}$. Now let 
$$
    C(\Delta_{\phi_1},\Delta_{\phi_2},\cdots,\Delta_{\phi_m})=\conv\{e_1\times \Delta_{\phi_1},\dots,e_m\times \Delta_{\phi_m}\}\subset\mathbb{R}^{m+n}
$$
be the Cayley polytope, where $e_k=(0,\cdots,0,1,0,\cdots,0)\in\R^m$ is the integer point with $1$ in the $k$-th position. The integer points of $C(\Delta_{\phi_1},\Delta_{\phi_2},\cdots,\Delta_{\phi_m})$ are the points $e_k\times v_k$ with $v_k$ an integer point of $\Delta_{\phi_k}$. Thus, we can define a piecewise-linear function $\check{\phi}_{\text{mixed}}$ on $C(\Delta_{\phi_1},\Delta_{\phi_2},\cdots,\Delta_{\phi_m})$ by setting $\check{\phi}_{\text{mixed}}(e_k\times v_k)=\check{\phi}_k(v_k)$. The projection of the affine parts of the convex hull of $\graph(\check{\phi}_{\text{mixed}})$ then gives a subdivision $T_{\text{mixed}}$ of $C(\Delta_{\phi_1},\Delta_{\phi_2},\cdots,\Delta_{\phi_m})$. It turns out that the polytope $\Delta_{\phi}$ can be identified with $C(\Delta_{\phi_1},\Delta_{\phi_2},\cdots,\Delta_{\phi_m})\cap \{x_k=\frac{1}{m}\ |\ 1\leq k\leq m\}$ with $x_k$ the $k$th-coordinate of $\mathbb{R}^{m+n}$, and the subdivision $T_{\phi}$ is the same as the restriction of $T_{\text{mixed}}$ to $\Delta_\phi$. Furthermore, if $\sigma_{\text{mixed}}$ is a cell of $T_{\text{mixed}}$ with $\sigma_{\text{mixed}}\cap(e_k\times\R^n)=\sigma_k^{d_k}$, a $d_k$-cell of $T_{\phi_k}$, then the cell $\sigma^{d_1,d_2,\dots,d_m}$ of $T_\phi$ which is identified with $\sigma_{\text{mixed}}|_{\Delta_\phi}$ is of the form
\begin{equation}
\label{eq_mixed_cell}
    \sigma^{d_1,d_2,\dots,d_m}=\sigma_{1}^{d_1}+\sigma_{2}^{d_2}+\cdots+\sigma_{m}^{d_m},
\end{equation}
the Minkowski sum of $\sigma_{1}^{d_1},\sigma_{2}^{d_2},\dots,\sigma_{m}^{d_m}$. Note that if $V(\phi_1), V(\phi_2),\dots,V(\phi_m)$ intersect transversely, $\dim(\sigma^{d_1,d_2,\dots,d_m})=d_1+d_2+\cdots+d_m$. We call the subdivision $T_{\phi}$ and the cells $ \sigma^{d_1,d_2,\dots,d_m}$ \textit{mixed subdivision} and \textit{mixed cells}. The cells of $V(\phi)$ can then be classified by the mixed cells.

\begin{proposition}[\cite{M-S_2015}, Theorem 4.6.9]
\label{prop_dual_mixed_cell}
Suppose $V(\phi_1), V(\phi_2),\dots,V(\phi_m)$ intersect transversely, then the cells of $V(\phi)$ which are contained in $V(\phi_{i_1})\cap V(\phi_{i_2})\cap\cdots\cap V(\phi_{i_k})$ are dual to the mixed cells $\sigma^{d_1,d_2,\dots,d_m}$ with $d_j=0$ for $j\notin\{i_1,i_2,\dots,i_k\}$.
\end{proposition}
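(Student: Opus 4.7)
The plan is to unfold the duality between $V(\phi)$ and the mixed subdivision $T_\phi$ pointwise, and then read off which mixed cells arise from which intersection pattern among the pieces $V(\phi_1),\dots,V(\phi_m)$. The work is essentially bookkeeping: once the Cayley-trick description of mixed cells given immediately before the statement is combined with the characterization of membership in each $V(\phi_i)$, the proposition falls out.

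First I would fix a point $x \in V(\phi)$ and describe the mixed cell of $T_\phi$ dual to the cell $\tau$ of $V(\phi)$ containing $x$. Writing each $\phi_k$ as $\max_{v}\{a^{(k)}_v + \langle v, x\rangle\}$, let $\sigma_k^{d_k} \subset \Delta_{\phi_k}$ denote the convex hull of those exponents $v$ for which the maximum defining $\phi_k(x)$ is attained. By the Cayley-trick construction of $\check\phi_{\mathrm{mixed}}$, the affine pieces of the graph of $\check\phi$ over $\Delta_\phi$ are precisely the Minkowski sums of affine pieces of the graphs of the $\check\phi_k$; hence the mixed cell of $T_\phi$ dual to $\tau$ is exactly $\sigma^{d_1,\dots,d_m} = \sigma_1^{d_1} + \sigma_2^{d_2} + \cdots + \sigma_m^{d_m}$, as in \eqref{eq_mixed_cell}, and $\tau$ consists of all $x'$ with the same tuple $(\sigma_1^{d_1},\dots,\sigma_m^{d_m})$.

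Next I would characterize $x \in V(\phi_i)$ in these terms. By definition, $x \in V(\phi_i)$ iff $\phi_i$ fails to be locally linear at $x$, i.e.\ at least two monomials of $\phi_i$ achieve the maximum, which is equivalent to $d_i = \dim\sigma_i^{d_i} \geq 1$. Conversely, $x \notin V(\phi_j)$ forces $\sigma_j^{d_j}$ to be a single vertex, so $d_j = 0$. Therefore the condition that $\tau \subset V(\phi_{i_1}) \cap \cdots \cap V(\phi_{i_k})$ and $\tau \not\subset V(\phi_j)$ for $j \notin \{i_1,\dots,i_k\}$ translates exactly to $d_{i_s} \geq 1$ and $d_j = 0$ otherwise, which matches the class of mixed cells named in the statement.

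Finally I would use the transversality hypothesis (Definition \ref{def_trophypersurface_intersect}) to match the dimensions. For each $s$, the cell of $V(\phi_{i_s})$ containing $\tau$ is dual in $T_{\phi_{i_s}}$ to $\sigma_{i_s}^{d_{i_s}}$, hence has codimension $d_{i_s}$. Transversality then gives $\codim\tau = \sum_s d_{i_s}$, which agrees with $\dim \sigma^{d_1,\dots,d_m} = d_1 + \cdots + d_m$ since the $d_j = 0$ summands contribute nothing. The only step that requires genuine care is verifying that, under transversality, the Minkowski summands $\sigma_{i_s}^{d_{i_s}}$ have affine hulls in general position so that the sum indeed has dimension $\sum_s d_{i_s}$ rather than something smaller; this is the main obstacle, but it is dictated directly by the codimension-additivity in Definition \ref{def_trophypersurface_intersect} and can be extracted by comparing the affine span of $\tau$ with the affine spans of the cells of $V(\phi_{i_s})$ containing it.
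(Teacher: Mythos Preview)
The paper does not supply its own proof of this proposition; it is quoted from Maclagan--Sturmfels with a citation and no argument. Your proof is correct and is essentially the standard one: the key observation is that the mixed cell dual to a cell $\tau\subset V(\phi)$ is the Minkowski sum $\sigma_1^{d_1}+\cdots+\sigma_m^{d_m}$, where $\sigma_k^{d_k}$ records which monomials of $\phi_k$ attain the maximum along $\tau$, so that $\tau\subset V(\phi_i)$ iff $d_i\ge 1$. Your final paragraph correctly isolates transversality as the hypothesis guaranteeing $\dim(\sigma_1^{d_1}+\cdots+\sigma_m^{d_m})=\sum_k d_k$.

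One small remark: as literally stated the proposition is slightly loose, since a cell contained in $V(\phi_{i_1})\cap\cdots\cap V(\phi_{i_k})$ could also lie in some further $V(\phi_j)$, which would force $d_j\ge 1$ rather than $d_j=0$. Your sharper reading---taking $\{i_1,\dots,i_k\}$ to be the \emph{exact} set of indices $i$ with $\tau\subset V(\phi_i)$---is the intended interpretation and is what the subsequent application to tropical curves actually uses.
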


We now give the definition of \textit{tropical curve}, which is the intersection of several tropical hypersurfaces.

\begin{definition}
\label{def_tropcurve}
Suppose $V(\phi_1),\cdots,V(\phi_{n-1})$ are $n-1$ tropical hypersurfaces in $\mathbb{R}^n$ which intersect transversely, then the \textit{tropical curve}  $\beta_{\trop}(\phi_1,\phi_2,\cdots,\phi_{n-1})$ is the intersection of $V(\phi_1),\cdots,V(\phi_{n-1})$ whenever it is nonempty, i.e.,
$$
    \beta_{\trop}(\phi_1,\phi_2,\cdots,\phi_{n-1}):=\bigcap_{k=1}^{n-1}V(\phi_i).
$$
\end{definition}

Use Proposition \ref{prop_dual_mixed_cell}, we see that the mixed cell $\sigma_v$ dual to a vertex $v$ of $\beta_{\trop}(\phi_1,\phi_2,\cdots,\phi_{n-1})$ is of the form $\sigma^{d_1,d_2,\dots,d_{n-1}}$ with one of $\{d_1,d_2,\dots,d_{n-1}\}$ being $2$ and the others being $1$. The mixed cell $\sigma_e$ dual to an edge $e$ of $\beta_{\trop}(\phi_1,\phi_2,\cdots,\phi_{n-1})$ is of the form $\sigma^{1,1,\dots,1}$.

\subsection{From a Coherent Sheaf to a Tropical Curve}
\label{section3.2}
In this subsection, we consider a coherent sheaf supported on a holomorphic curve in $X_{\Sigma}$ and evaluate its central charge through tropical geometry.

\subsubsection{The tropicalization of a holomorphic curve}
\label{section3.2.1}
Let us first give the tropicalization of a polynomial with complex coefficients.
\begin{definition}
\label{def_tropicalization_of_polynomial}
Suppose $f\in\C[z_1,z_2,\dots,z_n]$ is given as 
$$
    f(z_1,z_2,\dots,z_n)=az_1^{i_1}\dots z_n^{i_n}+bz_1^{j_1}\dots z_n^{j_n}+\cdots,
$$
then the \textit{tropicalization} of $f$ is the tropical polynomial
$$
    f_{\trop}=(\log|a|\odot x_1^{i_1}x_2^{i_2}\cdots x_n^{i_n})\oplus(\log|b|\odot x_1^{j_1}x_2^{j_2}\cdots x_n^{j_n})\oplus\cdots.
$$
\end{definition}

Now let $\{Q_1,Q_2,\dots,Q_{n-1}\}$ be a family of ample divisors in $X_\Sigma$ such that $[Q_k]=[\sum_{j=1}^{p}a_{k,j}D_j]$, whose corresponding polytope is
$G_k=\{m\in M_{\mathbb{R}}\ |\ \langle m,v_j \rangle\geq -a_{k,j}\}$. The ampleness guarantees that the shapes of $G_k$'s are the same as the shape of the polytope $G_{\text{can}}$. We assume that $Q_k$ are taken generically, in the sense that $Q_k=\overline{Z(f_k)}$ with $\Newton(f_k)=G_k$ according to Proposition \ref{prop_globalsection_polytope}. We further assume that the tropical hypersurfaces $V(f_{k,\trop})$'s intersect transversely. We call such a family of divisors a \textit{nice family} of divisors in $X_\Sigma$. For the simplicity of notations, we abuse $f_k$ to denote $f_{k,\trop}$ in the rest of this section.

\begin{definition}
\label{def_trop-holo_curve}
Suppose $C=\bigcap_{k=1}^{n-1}Q_k$ is a holomorphic curve in $X_\Sigma$ which is a complete intersection of a nice family of divisors $\{Q_1,Q_2,\dots,Q_{n-1}\}$. Then $\beta_{\trop,C}$, the $\textit{tropicalization}$ of $C$, is the tropical curve $\beta_{\trop}(f_1,f_2,\dots,f_{n-1})$, i.e.,
\begin{equation}
\label{eq_trop_curve}
    \beta_{\trop,C}=\bigcap_{k=1}^{n-1}V(f_k).
\end{equation}
\end{definition}

Note that $\bigcup_{k=1}^{n-1}V(f_k)$ is dual to $(G,T_{f})$ with $G=G_1+G_2+\cdots+G_{n-1}$ and $f=f_1\odot f_2\odot\cdots\odot f_{n-1}$. The vertices and edges of $\beta_{\trop,C}$ are then dual to the mixed cells $\sigma_v$'s and $\sigma_e$'s of $T_f$.

\subsubsection{Evaluation of the central charge of a coherent sheaf}
\label{section3.2.2}
Let $\{Q_1,Q_2,\dots,Q_{n-1}\}$ be a nice family of divisors in $X_\Sigma$, and $C=\bigcap_{k=1}^{n-1}Q_k$ be the holomorphic curve which is the complete intersection of them. We consider a coherent sheaf
$$
    E_C=i_*\mathcal{O}_C(\sum_{k=1}^{n-1}\tilde{Q}_k),
$$
where $\tilde{Q}_k$ is the the divisor in $X$ pulled back from $Q_k$ along $\pi:X\rightarrow X_\Sigma$. We do the evaluation of the central charge $Z_t(E_C)$ as in Definition \ref{def_centralcharge_chernclass} in this subsection.

We first show what $\ch(E_C)$ is.

\begin{lemma}
\label{lemma_chern_character}
Then Chern character of the coherent sheaf $E_C$ is 
\begin{equation}
\label{eq_chern_class}
    \ch(E_C)=(1-e^{-[\tilde{D}_0]})\prod_{k=1}^{n-1}(1-e^{-[\tilde{Q}_k]})e^{\sum_{k=1}^{n-1}[\tilde{Q}_k]}.
\end{equation}
\end{lemma}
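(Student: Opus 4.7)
The plan is to realize $i_*\mathcal{O}_C$ as the cohomology of a Koszul complex on $X$ and then twist. First, since $X_\Sigma$ sits inside $X = K_{X_\Sigma}$ as the zero section $\tilde{D}_0$, and $\tilde{Q}_k = \pi^{-1}(Q_k)$ restricts to $Q_k$ on $\tilde{D}_0 \cong X_\Sigma$, the curve $C = \bigcap_{k=1}^{n-1} Q_k$ in $X_\Sigma$ is recovered inside $X$ as the scheme-theoretic intersection
\[
    C = \tilde{D}_0 \cap \tilde{Q}_1 \cap \tilde{Q}_2 \cap \cdots \cap \tilde{Q}_{n-1}.
\]
This is a complete intersection of codimension $n = \dim X - 1$: the ``nice family'' assumption guarantees that the $Q_k$ meet transversely inside $X_\Sigma$, and the zero section $\tilde{D}_0$ is transverse to $\pi^*$ divisors by construction. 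Hence the sections of $\mathcal{O}(\tilde{D}_0) \oplus \bigoplus_k \mathcal{O}(\tilde{Q}_k)$ cutting out $C$ form a regular sequence on the smooth variety $X$.

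Next, I would use the Koszul resolution associated to this regular sequence,
\[
    0 \to \Lambda^n \mathcal{E}^\vee \to \cdots \to \Lambda^2 \mathcal{E}^\vee \to \mathcal{E}^\vee \to \mathcal{O}_X \to i_*\mathcal{O}_C \to 0,
\]
where $\mathcal{E} = \mathcal{O}(\tilde{D}_0) \oplus \mathcal{O}(\tilde{Q}_1) \oplus \cdots \oplus \mathcal{O}(\tilde{Q}_{n-1})$. Taking Chern characters and using multiplicativity under tensor product, this resolution yields the classical complete-intersection formula
\[
    \ch(i_*\mathcal{O}_C) \;=\; \bigl(1 - e^{-[\tilde{D}_0]}\bigr)\prod_{k=1}^{n-1}\bigl(1 - e^{-[\tilde{Q}_k]}\bigr).
\]

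Finally, by the projection formula
\[
    i_*\mathcal{O}_C\!\left(\sum_{k=1}^{n-1}\tilde{Q}_k\right) \;=\; i_*\bigl(\mathcal{O}_C \otimes i^*\mathcal{O}_X(\textstyle\sum_k \tilde{Q}_k)\bigr) \;=\; i_*\mathcal{O}_C \otimes \mathcal{O}_X\!\left(\sum_{k=1}^{n-1}\tilde{Q}_k\right),
\]
so multiplying the previous expression by $\ch(\mathcal{O}_X(\sum_k \tilde{Q}_k)) = e^{\sum_{k=1}^{n-1}[\tilde{Q}_k]}$ produces the claimed formula (\ref{eq_chern_class}).

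The only genuinely substantive point is the regularity of the sequence $(\tilde{D}_0,\tilde{Q}_1,\dots,\tilde{Q}_{n-1})$ on $X$, which I expect to be the main obstacle to present cleanly: one must combine the transversality built into the notion of a nice family with the transversality of $\tilde{D}_0$ to the pullback divisors $\tilde{Q}_k = \pi^* Q_k$. Once this is established, everything else is a formal application of the Koszul resolution and the projection formula, and no delicate cancellations occur.
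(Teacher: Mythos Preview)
Your proof is correct and follows essentially the same approach as the paper: both rest on the Koszul description of $i_*\mathcal{O}_C$ for the complete intersection $C=\tilde{D}_0\cap\tilde{Q}_1\cap\cdots\cap\tilde{Q}_{n-1}$, followed by a twist using the projection formula. The paper phrases the key step as the identity $E_C=i_*\mathcal{O}_{\tilde{D}_0}\otimes(\bigotimes_{k}i_*\mathcal{O}_{\tilde{Q}_k})\otimes\mathcal{O}_X(\sum_k\tilde{Q}_k)$, which is really the same Koszul statement read in $K$-theory; your explicit invocation of the Koszul resolution and of the regularity of the defining sequence makes that step more transparent.
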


\begin{proof}
Use the exact sequence
$$
    0\rightarrow\mathcal{O}_{X}(-\tilde{Q}_k)\rightarrow \mathcal{O}_{X}\rightarrow i_*\mathcal{O}_{\tilde{Q}_k}\rightarrow 0,
$$
we have that 
$$
    \ch(i_*\mathcal{O}_{\tilde{Q}_k})=1-e^{-[\tilde{Q}_k]}.
$$
Now note that $E_C=i_*\mathcal{O}_{\tilde{D}_0}\otimes(\bigotimes\limits_{k=1}^{n-1}i_*\mathcal{O}_{\tilde{Q}_k})\otimes\mathcal{O}_X(\sum\limits_{k=1}^{n-1}\tilde{Q}_k)$, we have that 
\begin{equation*}
\begin{aligned}
    \ch(E_C)=&\ch(i_*\mathcal{O}_{\tilde{D}_0})\prod_{k=1}^{n-1}\ch(i_*\mathcal{O}_{\tilde{Q}_k})\ch(\mathcal{O}_X(\sum\limits_{k=1}^{n-1}\tilde{Q}_k))\\
    =&(1-e^{-[\tilde{D}_0]})\prod_{k=1}^{n-1}(1-e^{-[\tilde{Q}_k]})e^{\sum\limits_{k=1}^{n-1}[\tilde{Q}_k]}.
\end{aligned}
\end{equation*}
\end{proof}

Note that the degree of $\ch(E_C)$ is greater than $n$, so only the degree $0$ and degree $1$ terms of $\todd_{K_{X_\Sigma}}$ and $w(t_1,\cdots,t_{p-n},\frac{[\tilde{D}_{n+1}]}{2\pi\sqrt{-1}},\cdots,\frac{[\tilde{D}_{p}]}{2\pi\sqrt{-1}})$ contribute to $Z_t(E_C)$.

For $\todd_{X}$, its degree $0$ term is $1$, and its degree $1$ term is $0$ since $X$ is Calabi-Yau.

For $w(t_1,\cdots,t_{p-n},\frac{[\tilde{D}_{n+1}]}{2\pi\sqrt{-1}},\cdots,\frac{[\tilde{D}_{p}]}{2\pi\sqrt{-1}})$, recall that the generators $l^{(s)}$ of the Mori cone of $X$ are given as in (\ref{eq_mori_generators}), so its degree $0$ term is 
\begin{align*}
    &w(t,0)\\
    =&\sum_{m_1\geq0,\dots,m_{p-n}\geq0}\frac{1}{\prod_{j=0}^p\Gamma(1+\sum\limits_{s=1}^{p-n}l_j^{(s)}m_s)}t_1^{m_1}\cdots t_{p-n}^{m_{p-n}}\\
    =&\sum_{m_1\geq0,\dots,m_{p-n}\geq0}\frac{1}{\Gamma(1+\sum\limits_{s=1}^{p-n}m_s(\sum\limits_{i=1}^nv_{n+s,i}-1))\prod\limits_{i=1}^n\Gamma(1+\sum\limits_{s=1}^{p-n}m_s(-v_{n+s,i}))\prod\limits_{s=1}^{p-n}\Gamma(1+m_s)}t_1^{m_1}\cdots t_{p-n}^{m_{p-n}}\\
    =&1,
\end{align*}
where the third equation is because $(\sum\limits_{i=1}^nv_{n+s,i}-1)<0$ according to Corollary \ref{cor_Fanocor2}, thus only the term with all $m_s=0$ remains. Its degree $1$ term is

\begin{equation}
\label{eq_w(t)_deg1}
    \sum_{s=1}^{p-n}\frac{\partial w(t,\rho)}{\partial\rho_s}|_{\rho=0}\frac{-[\tilde{D}_{n+s}]}{2\pi\sqrt{-1}}=\sum_{s=1}^{p-n}\log\check{t}_s(t)\frac{-[\tilde{D}_{n+s}]}{2\pi\sqrt{-1}}
\end{equation}
according to (\ref{eq_mir_map}).

Plugging (\ref{eq_chern_class}) and (\ref{eq_w(t)_deg1}) into (\ref{eq_centralcharge_chernclass}), we have that the central charge $Z_t(E_C)$ is
\begin{equation}
\label{eq_result_centralcharge_chernclass}
    \begin{aligned}
        &Z_t(E_C)\\
        =&(2\pi\sqrt{-1})^{n}\sum\limits_{s=1}^{p-n}\Big((-[\tilde{D}_{n+s}][\tilde{D}_0]\prod_{k=1}^{n-1}[\tilde{Q}_k])\log \check{t}_s(t)\Big)+\\
        &(2\pi\sqrt{-1})^{n+1}(-\frac{1}{2})\big(([\tilde{D}_0]+\sum\limits_{k=1}^{n-1}[\tilde{Q}_k])[\tilde{D}_0]\prod\limits_{k=1}^{n-1}[\tilde{Q}_k]\big){+(2\pi\sqrt{-1})^{n+1}([\tilde{D}_0]\prod\limits_{k=1}^{n-1}[\tilde{Q}_k]\sum\limits_{k=1}^{n-1}[\tilde{Q}_k])}\\
        =&(2\pi\sqrt{-1})^{n}\sum\limits_{s=1}^{p-n}\Big(([D_{n+s}]\prod_{k=1}^{n-1}[Q_k])(-\log \check{t}_s(t))\Big)+(2\pi\sqrt{-1})^{n+1}\frac{1}{2}\big((\sum\limits_{j=1}^p[D_j]{+}\sum\limits_{k=1}^{n-1}[Q_k])\prod\limits_{k=1}^{n-1}[Q_k]\big),
\end{aligned}
\end{equation}
where the second equation is because $[\tilde{D}_0]=[X_{\Sigma}]$ and $\sum_{j=0}^p[\tilde{D}_j]=0$.

\subsubsection{Interpretation of the central charge through tropical geometry}
\label{section3.2.3}
We want to interpret the central charge $Z_t(E_C)$ using the combinatorial information of $\beta_{\trop,C}$. To do it, we need the concept of \textit{mixed volume}.

\begin{proposition}[H. Minkowski]
    Suppose $P_1,P_2,\dots,P_m$ are $m$ polytopes in $\mathbb{R}^n$, and let $P(\lambda_1,\dots,\lambda_m)$ be the Minkowski sum of the polytopes $\lambda_1P_1,\lambda_2P_2,\dots,\lambda_{m}P_m$ with $\lambda_k$'s nonnegative real numbers, then the volume of $P(\lambda_1,\dots,\lambda_m)$ with respect to the canonical metric of $\mathbb{R}^n$ is a real polynomial of degree $n$ in $\lambda_1,\lambda_2,\dots,\lambda_m$.
\end{proposition}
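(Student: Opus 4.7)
The plan is to use the mixed subdivision machinery developed in Section \ref{section3.1}. First, I would fix a sufficiently generic piecewise-linear lifting on the Cayley polytope $C(P_1,\ldots,P_m)$ to produce a regular mixed subdivision $T$ of $P_1+\cdots+P_m$ all of whose maximal cells are of the form $\sigma=\sigma_1+\cdots+\sigma_m$, with $\sigma_k$ a $d_k$-dimensional face of $P_k$ satisfying $d_1+\cdots+d_m=n$, and such that the translated affine hulls of the $\sigma_k$ span complementary linear subspaces of $\R^n$. The existence of such a \emph{fine} mixed subdivision is a standard transversality fact on the space of liftings.

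Next I would observe that $T$ rescales compatibly: for any $\lambda_1,\ldots,\lambda_m>0$, the polytope $P(\lambda_1,\ldots,\lambda_m)$ inherits a regular mixed subdivision $T_\lambda$ whose maximal cells are precisely $\lambda_1\sigma_1+\cdots+\lambda_m\sigma_m$, one for each maximal cell $\sigma$ of $T$. Indeed, rescaling the $k$th block of Cayley coordinates by $\lambda_k$ preserves the upper envelope combinatorics, so the induced subdivision on the slice $P(\lambda)$ is the corresponding rescaled copy of $T$. In particular, the combinatorial type of $T_\lambda$ is independent of $\lambda$ on the open orthant.

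The key computation is that, by the complementary-subspace property, the Minkowski sum $\lambda_1\sigma_1+\cdots+\lambda_m\sigma_m$ is linearly isomorphic (via a map whose Jacobian does not depend on $\lambda$) to the Cartesian product $\lambda_1\sigma_1\times\cdots\times\lambda_m\sigma_m$ inside $V_1\oplus\cdots\oplus V_m\cong\R^n$, where $V_k$ is the linear span parallel to the affine hull of $\sigma_k$. Consequently,
\begin{equation*}
\vol\bigl(\lambda_1\sigma_1+\cdots+\lambda_m\sigma_m\bigr)=\lambda_1^{d_1}\lambda_2^{d_2}\cdots\lambda_m^{d_m}\cdot\vol(\sigma_1+\cdots+\sigma_m).
\end{equation*}
Summing over all maximal mixed cells of $T$ then gives
\begin{equation*}
\vol\bigl(P(\lambda_1,\ldots,\lambda_m)\bigr)=\sum_{\sigma}\vol(\sigma)\,\lambda_1^{d_1(\sigma)}\cdots\lambda_m^{d_m(\sigma)},
\end{equation*}
which is manifestly a homogeneous polynomial of degree $n$ in $\lambda_1,\ldots,\lambda_m$. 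The identity then extends from the open orthant to the closed orthant $\lambda_k\geq 0$ by continuity.

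The main obstacle I anticipate is the genericity step: producing a regular mixed subdivision in which every maximal cell is simultaneously of the correct Minkowski shape and in general position. This is standard but should be invoked explicitly, since the whole argument relies on having a uniform combinatorial type that survives the rescaling operation, so that the scaling formula for the volume of each mixed cell can be applied term by term.
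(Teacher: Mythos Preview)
The paper does not actually supply a proof of this proposition; it is stated as a classical result attributed to Minkowski and then immediately used to define mixed volumes. So there is no ``paper's own proof'' to compare against.

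That said, your argument is correct and is essentially the standard modern proof via fine mixed subdivisions. In fact, the paper's next result, Proposition~\ref{prop_mixed_volume} (cited from Maclagan--Sturmfels), records precisely the identity you derive: the coefficient of $\lambda_1^{d_1}\cdots\lambda_m^{d_m}$ in $\vol(P(\lambda))$ equals the sum of volumes of the mixed cells of type $\sigma^{d_1,\ldots,d_m}$. Your proposal is effectively a proof of that proposition together with the observation that the resulting expression is a degree-$n$ polynomial. The genericity step you flag (existence of a fine mixed subdivision in which each maximal cell has complementary summands) is indeed the only point requiring an explicit citation; once granted, the rescaling and cell-by-cell volume computation go through exactly as you describe.
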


\begin{definition}[\cite{Ewald_1996}, Definition 3.3]
    If we write the volume of $P(\lambda_1,\dots,\lambda_m)$ as 
    $$
        \vol(P(\lambda_1,\dots,\lambda_m))=\sum_{a_1,a_2,\dots,a_n=1}^{m}\MV(P_{a_1},P_{a_2},\cdots,P_{a_n})\lambda_{a_1}\lambda_{a_2}\cdots\lambda_{a_n},
    $$
    then the coefficient $\MV(P_{a_1},P_{a_2},\cdots,P_{a_n})$ is called the \textit{mixed volume} of the polytopes $P_{a_1},P_{a_2},\cdots,P_{a_n}$.
\end{definition}

The mixed volume is the sum of the volumes of mixed cells.

\begin{proposition}[\cite{M-S_2015}, Proof of Proposition 4.6.3]
\label{prop_mixed_volume}
Suppose $\phi_1,\phi_2,\dots,\phi_m$ are $m$ tropical polynomials whose tropical hypersurfaces $V(\phi_1),V(\phi_2),\dots,V(\phi_m)$ intersect transversely, and $\phi=\phi_1\odot\phi_2\odot\cdots\odot\phi_{m}$. Let $(\Delta_{\phi_1},T_{\phi_1}),(\Delta_{\phi_2},T_{\phi_2}),\dots,(\Delta_{\phi_m},T_{\phi_m})$ and $(\Delta_{\phi},T_{\phi})$ be their Newton polytopes and corresponding subdivisions. Then when $d_1+d_2+\cdots+d_m=n$, the coefficients of $\lambda_1^{d_1}\lambda_2^{d_2}\cdots\lambda_m^{d_m}$ in $\vol(\Delta_\phi(\lambda_1,\lambda_2,\dots,\lambda_m))$ is equal to the sum of the mixed cells in $T_{\phi}$ of the form $\sigma^{d_1,d_2,\dots,d_m}$ as in (\ref{eq_mixed_cell}), i.e.,
\begin{equation}
\label{eq_sum_mixed_cell_mixed_volume}
\sum\limits_{\sigma^{d_1,d_2,\dots,d_m}\in T_{\phi}}\vol(\sigma^{d_1,d_2,\dots,d_m})=\sum\limits_{\{a_1,\dots,a_n\}=\{\underbrace{1,\dots,1}_{d_1\text{ times}},\dots,\underbrace{m,\dots,m}_{d_m\text{ times}}\}}\MV(\Delta_{\phi_{a_1}},\dots,\Delta_{\phi_{a_n}}).
\end{equation}
\end{proposition}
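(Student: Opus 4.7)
The plan is to express $\vol(\Delta_\phi(\lambda_1,\ldots,\lambda_m))$ as a sum of contributions from the mixed cells of $T_\phi$, then identify the coefficient of each monomial $\lambda_1^{d_1}\cdots\lambda_m^{d_m}$ with the right-hand side of (\ref{eq_sum_mixed_cell_mixed_volume}). The starting point is the observation that $\Delta_\phi(\lambda_1,\ldots,\lambda_m)=\lambda_1\Delta_{\phi_1}+\cdots+\lambda_m\Delta_{\phi_m}$ inherits a scaled mixed subdivision through the same Cayley construction that produced $T_\phi$: at the node $e_k\times v_k$ one simply replaces $\check{\phi}_k(v_k)$ by $\lambda_k\check{\phi}_k(v_k)$ before projecting the lower envelope. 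The resulting subdivision of $\Delta_\phi(\lambda_1,\ldots,\lambda_m)$ is combinatorially isomorphic to $T_\phi$, and its cells are exactly the rescaled Minkowski sums $\lambda_1\sigma_1^{d_1}+\cdots+\lambda_m\sigma_m^{d_m}$ indexed by the mixed cells $\sigma^{d_1,\ldots,d_m}$ of $T_\phi$.

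Next I would compute the $n$-volume of each rescaled mixed cell. Cells with $d_1+\cdots+d_m<n$ lie in a proper affine subspace of $\R^n$ and contribute zero, and by the transversality assumption (Definition \ref{def_trophypersurface_intersect}) there are no mixed cells with $d_1+\cdots+d_m>n$. For the remaining cells, transversality forces the affine hulls of $\sigma_1^{d_1},\ldots,\sigma_m^{d_m}$ to have linearly independent tangent spaces that together span $\R^n$, so the Minkowski sum map $\sigma_1^{d_1}\times\cdots\times\sigma_m^{d_m}\to\sigma_1^{d_1}+\cdots+\sigma_m^{d_m}$ is an affine isomorphism. Rescaling each factor by $\lambda_k$ multiplies the Jacobian determinant by $\lambda_k^{d_k}$, so
\begin{equation*}
    \vol\bigl(\lambda_1\sigma_1^{d_1}+\cdots+\lambda_m\sigma_m^{d_m}\bigr)
    =\lambda_1^{d_1}\cdots\lambda_m^{d_m}\,\vol\bigl(\sigma^{d_1,\ldots,d_m}\bigr).
\end{equation*}

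Summing the volumes of all top-dimensional mixed cells of the scaled subdivision, which partition $\Delta_\phi(\lambda_1,\ldots,\lambda_m)$, yields
\begin{equation*}
    \vol\bigl(\Delta_\phi(\lambda_1,\ldots,\lambda_m)\bigr)
    =\sum_{d_1+\cdots+d_m=n}\lambda_1^{d_1}\cdots\lambda_m^{d_m}\sum_{\sigma^{d_1,\ldots,d_m}\in T_\phi}\vol\bigl(\sigma^{d_1,\ldots,d_m}\bigr).
\end{equation*}
Comparing this polynomial expansion with the defining expansion of the mixed volume, and collecting those orderings $(a_1,\ldots,a_n)$ in which index $k$ appears exactly $d_k$ times, identifies the coefficient of $\lambda_1^{d_1}\cdots\lambda_m^{d_m}$ with both $\sum_{\sigma^{d_1,\ldots,d_m}}\vol(\sigma^{d_1,\ldots,d_m})$ and $\sum_{\{a_1,\ldots,a_n\}}\MV(\Delta_{\phi_{a_1}},\ldots,\Delta_{\phi_{a_n}})$, giving (\ref{eq_sum_mixed_cell_mixed_volume}).

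The main obstacle is the volume factorization for a single top-dimensional mixed cell. What must be checked carefully is that the transversality of the tropical hypersurfaces $V(\phi_k)$, phrased as a codimension condition on the primal side, translates via Proposition \ref{prop_dual_mixed_cell} into genuine linear independence of the tangent directions of the Newton-side cells $\sigma_k^{d_k}$. Once this linear-algebraic translation is in hand, the rescaling of a $d_k$-dimensional factor by $\lambda_k$ contributes exactly $\lambda_k^{d_k}$ to the $n$-volume of the Minkowski sum, and the rest of the argument is a routine comparison of coefficients in the Minkowski polynomial.
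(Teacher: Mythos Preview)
The paper does not supply its own proof of this proposition; it is stated with a citation to \cite{M-S_2015}, Proof of Proposition~4.6.3, and then immediately applied. Your argument is essentially the standard one found in that reference: subdivide the scaled Minkowski sum $\lambda_1\Delta_{\phi_1}+\cdots+\lambda_m\Delta_{\phi_m}$ into scaled mixed cells, observe that an $n$-dimensional cell of type $(d_1,\ldots,d_m)$ has volume $\lambda_1^{d_1}\cdots\lambda_m^{d_m}\vol(\sigma^{d_1,\ldots,d_m})$, sum, and compare coefficients with the Minkowski polynomial.

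One small technical slip: your description of how to produce the scaled subdivision via the Cayley construction is off. Replacing the heights $\check{\phi}_k(v_k)$ by $\lambda_k\check{\phi}_k(v_k)$ does \emph{not} rescale the polytopes; it only perturbs the lifting and can change the combinatorics of the subdivision. What you actually need is to form the Cayley polytope of the scaled polytopes $\lambda_k\Delta_{\phi_k}$ and transport the original heights along the scaling maps, setting $\check{\phi}_{\text{mixed}}(e_k\times\lambda_k v_k)=\check{\phi}_k(v_k)$. Because the scaling $v\mapsto\lambda_k v$ on each factor is linear, the lower envelope has the same facial structure, so the induced subdivision of $\lambda_1\Delta_{\phi_1}+\cdots+\lambda_m\Delta_{\phi_m}$ has precisely the cells $\lambda_1\sigma_1^{d_1}+\cdots+\lambda_m\sigma_m^{d_m}$ you want. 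With this correction, the rest of your argument (the transversality-to-linear-independence translation via Proposition~\ref{prop_dual_mixed_cell}, the Jacobian computation, and the coefficient comparison) is sound.
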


For our case of the tropical curve $\beta_{\trop,C}$ as in Definition \ref{def_tropcurve}, Proposition \ref{prop_mixed_volume} can be applied to the mixed cells $\sigma_v$ and we have that
\begin{equation}
\label{eq_sigma_v_mixed_volume}
    \begin{aligned}
        &\sum\limits_{v\in\{\text{vertices of }\beta_{\trop,C}\}}\vol(\sigma_v)\\ 
        =&\sum\vol(\sigma^{2,1,\dots,1})+\sum\vol(\sigma^{1,2,\dots,1})+\cdots+\sum\vol(\sigma^{1,1,\dots,2})\\ 
        =&\sum\limits_{\{a_1,\dots,a_n\}=\{1,1,2,3,\dots,n-1\}}\MV({G_{a_1}},\dots,{G_{a_n}})+\sum\limits_{\{a_1,\dots,a_n\}=\{1,2,2,3,\dots,n-1\}}\MV({G_{a_1}},\dots,{G_{a_n}})\\ 
        &+\cdots+\sum\limits_{\{a_1,\dots,a_n\}=\{1,2,3,\dots,n-2,n-1,n-1\}}\MV({G_{a_1}},\dots,{G_{a_n}})\\ 
        =&\sum_{k=1}^{n-1}\frac{n!}{2}\MV(G_k,G_1,G_2,\cdots,G_{n-1}).
\end{aligned}
\end{equation}

It can also be applied to the mixed cells $\sigma_e$. Note that $G_1,G_2,\dots,G_{n-1}$ and $G$ all have the same shape as $G_{\text{can}}$. Thus they all have $p$ facets, which are dual to $v_j,j=1,2,\dots,p$. We denote them by $F_{j,1},F_{j,2},\dots,F_{j,n-1},F_j$ respectively. Then we want to apply Proposition \ref{prop_mixed_volume} to a single facet $F_j$. To do it, let $H_j$ be the $(n-1)$-subspace of $\R^n$ which is parallel to the hyperplane containing $F_j$. Choose a basis $\{\xi_j^{\perp,1},\xi_j^{\perp,2},\dots,\xi_j^{\perp,n-1}\}$ of the sublattice $H_j\cap\Z^n$, and $\gamma_j$ such that $\{\xi_j^{\perp,1},\xi_j^{\perp,2},\dots,\xi_j^{\perp,n-1},\gamma_j\}$ is a basis of $\Z^n$. Then let 
$$
    \Psi_j:\R^{n}\rightarrow \R^{n-1}
$$
be the linear map which is induced by $\Psi_j(\xi_j^{\perp,i})=e_i,i=1,2,\dots,n-1$ and $\Psi_j(\gamma_j)=0$. It induces a map 
$$
    \Psi_{j,\poly}:\C[z_1,z_2,\dots,z_n]\rightarrow\C[z_1,z_2,\dots,z_{n-1}]
$$
which is induced by $z^v=z^{\psi_j(v)}$. Then the tropical hypersurfaces $V(\Psi_{j,\poly}(f_k))$ and $\bigcup\limits_{k=1}^{n-1}V(\Psi_{j,\poly}(f_{k}))$ are dual to $(\Psi_j(F_{j,k}),\Psi_j(T_{f_k}|_{F_{j,k}}))$ and $(\Psi_j(F_{j}),\Psi_j(T_{f}|_{F_{j}}))$. Then apply Proposition \ref{prop_mixed_volume} to the mixed cells $\sigma_e$ contained in $F_j$, we have that 
\begin{equation}
\label{eq_sigma_e_mixed_volume}
    \begin{aligned}
    &\sum\limits_{\sigma_e\subset F_j}\vol_{\text{int}}(\sigma_e)\\ 
    =&\sum\limits_{\{a_1,\dots,a_{n-1}\}=\{1,2,3,\dots,n-1\}}\MV(\Psi_j(F_{j,a_1}),\Psi_j(F_{j,a_2}),\dots,\Psi_j(F_{j,a_{n-1}}))\\ 
    =& (n-1)!\MV(\Psi_j(F_{j,1}),\Psi_j(F_{j,2}),\dots,\Psi_j(F_{j,n-1})),
\end{aligned}
\end{equation}
where $\vol_{\text{int}}(\sigma_e)=\vol(\Psi_j(\sigma_e))$.

Now we relate the intersection numbers $[D_j]\prod_{k=1}^{n-1}[Q_k]$ and $(\sum_{k=1}^{n-1}[Q_k])\prod_{k=1}^{n-1}[Q_k]$ in (\ref{eq_result_centralcharge_chernclass}) with the mixed volumes using Bernstein's theorem.

\begin{theorem}[Bernstein's Theorem \cite{Bernstein_1975}]
\label{thm_Bernsteinthm}
For a generic choice of the polynomials $f_1,f_2\dots,f_n$ whose Newton polytopes are $G_1,G_2,\dots,G_n$, the number of the solutions in $(\mathbb{C}^*)^{n}$ of the system
$$
    f_1(z)=f_2(z)=\dots=f_n(z)=0
$$
is equal to $n!\MV(G_1,G_2,\cdots,G_n)$.
\end{theorem}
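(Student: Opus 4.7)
My plan is to prove Bernstein's theorem by tropical degeneration: deform the system so that its tropicalizations intersect transversely, localize the solutions near the (necessarily finite) set of tropical intersection points using Puiseux valuations, count solutions at each point by reducing to a binomial system, and finally apply Proposition \ref{prop_mixed_volume} to match the total with $n!\,\MV(G_1,\ldots,G_n)$.

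First I would observe that the number of isolated solutions in $(\mathbb{C}^*)^n$ is upper semicontinuous and generically constant in the coefficients, so it suffices to compute it for one convenient choice. I would then pick Puiseux-series coefficients $f_k(t,z) = \sum_{\alpha \in G_k \cap \mathbb{Z}^n} c_{k,\alpha}\, t^{\nu_k(\alpha)} z^\alpha$ with generic $c_{k,\alpha} \in \mathbb{C}^*$ and generic height functions $\nu_k$, chosen so that the induced subdivisions $T_{f_k}$ are the desired ones and the tropical hypersurfaces $V(f_{1,\trop}),\ldots,V(f_{n,\trop})$ intersect transversely in the sense of Definition \ref{def_trophypersurface_intersect}.

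Next, a Puiseux-series solution $z(t) = (z_1(t),\ldots,z_n(t))$ has a valuation vector $\val(z(t)) \in \R^n$ that must lie on every $V(f_{k,\trop})$, since otherwise a single monomial would dominate $f_k$ at $z(t)$ and prevent cancellation. By transversality and Proposition \ref{prop_dual_mixed_cell}, $\bigcap_{k=1}^{n} V(f_{k,\trop})$ is a finite set, and each point $p$ is dual to a mixed cell $\sigma^{1,1,\ldots,1} = e_1 + \cdots + e_n$ of $T_\phi$, where each $e_k$ is an edge of $T_{f_k}$. At each such $p$, only the two monomials of $f_k$ whose exponents are the endpoints of $e_k$ contribute to lowest $t$-order, so the leading system is a binomial system $z^{w_k} = \lambda_k$, $k=1,\ldots,n$, with $w_k$ the integer direction vector of $e_k$ and $\lambda_k \in \mathbb{C}^*$ generic. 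The number of solutions in $(\mathbb{C}^*)^n$ of such a binomial system is $\lvert \det(w_1,\ldots,w_n) \rvert$, which coincides with the Euclidean volume $\vol(\sigma^{1,1,\ldots,1})$ of the parallelepiped $e_1 + \cdots + e_n$, and each leading solution extends uniquely to a full Puiseux-series solution by Hensel's lemma.

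Summing over all tropical intersection points and applying Proposition \ref{prop_mixed_volume} with $(d_1,\ldots,d_n) = (1,\ldots,1)$ gives
\[
\#\bigl\{z \in (\mathbb{C}^*)^n : f_1(z)=\cdots=f_n(z)=0\bigr\}
= \sum_{\sigma^{1,1,\ldots,1} \in T_\phi} \vol(\sigma^{1,1,\ldots,1})
= n!\,\MV(G_1,\ldots,G_n),
\]
where the factor $n!$ arises because the coefficient of $\lambda_1\cdots\lambda_n$ in $\vol(\Delta_\phi(\lambda_1,\ldots,\lambda_n))$ collects all $n!$ orderings of $(1,\ldots,n)$ by symmetry of the mixed volume. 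The hard part will be the local analysis at each tropical intersection point: verifying that the binomial leading system accurately predicts the multiplicity, that Hensel-type perturbation truly produces distinct Puiseux solutions rather than collapsing several into one, and that no solutions escape to the toric boundary during the degeneration. These issues depend on the combined genericity of the height functions $\nu_k$, which ensures transversality, and of the coefficients $c_{k,\alpha}$, which ensures the binomial leading systems are themselves nondegenerate.
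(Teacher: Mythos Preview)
The paper does not prove Bernstein's theorem at all: it is quoted as a known result from \cite{Bernstein_1975} and then immediately applied to compute the intersection numbers $[D_j]\prod_k[Q_k]$ and $(\sum_k[Q_k])\prod_k[Q_k]$. There is therefore no ``paper's own proof'' to compare against.

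That said, your proposed argument is a legitimate and well-known route to Bernstein's theorem --- essentially the polyhedral homotopy / tropical intersection proof going back to Huber--Sturmfels and developed in \cite{M-S_2015}. The outline is sound: tropicalize, localize solutions at the transverse tropical intersection points, solve the leading binomial system, lift via Hensel, and sum using Proposition~\ref{prop_mixed_volume}. The places you flag as ``the hard part'' are exactly where the work is. Two points to watch: first, to rule out solutions escaping to the toric boundary you need genericity of the $c_{k,\alpha}$ relative to the \emph{facial} systems of the $G_k$ (this is Bernstein's nondegeneracy condition), not just transversality of the tropicalizations; second, the identification $\lvert\det(w_1,\ldots,w_n)\rvert = \vol(\sigma^{1,\ldots,1})$ is the normalized lattice volume of the parallelepiped, so be sure your volume convention matches the one in Proposition~\ref{prop_mixed_volume}. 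With those caveats your sketch is correct, but for the purposes of this paper you should simply cite the theorem as the author does.
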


Applying Bernstein's Theorem \ref{thm_Bernsteinthm} to (\ref{eq_result_centralcharge_chernclass}), we have that 
\begin{equation}
\label{eq_intersection_number_vertex_mixed_volume}
    (\sum_{k=1}^{n-1}[Q_k])\prod_{k=1}^{n-1}[Q_k]=n!\sum_{k=1}^{n-1}\MV(G_k,G_1,G_2,\cdots,G_{n-1})
\end{equation}
and
\begin{equation}
\label{eq_intersection_number_edge_mixed_volume}
    [D_j]\prod_{k=1}^{n-1}[Q_k]=(n-1)!\MV(\Psi_j(F_{1,j}),\dots,\Psi_j(F_{n-1,j}))
\end{equation}
for $j=1,2,\dots,p$.

Now we conclude the proof of Theorem \ref{thm_maintheorem_1}

\begin{proof}[Proof of Theorem \ref{thm_maintheorem_1}]
Combining (\ref{eq_sigma_v_mixed_volume}) and (\ref{eq_sigma_e_mixed_volume}) with (\ref{eq_intersection_number_vertex_mixed_volume}) and (\ref{eq_intersection_number_edge_mixed_volume}), we have that $$
    (\sum_{k=1}^{n-1}[Q_k])\prod_{k=1}^{n-1}[Q_k]=2\sum\limits_{v\in\{\text{vertices of }\beta_{\trop,C}\}}\vol(\sigma_v)$$
and 
$$
    [D_j]\prod_{k=1}^{n-1}[Q_k]=\sum\limits_{\sigma_e\subset F_j}\vol_{\text{int}}(\sigma_e)
$$
for $j=1,2,\dots,p$. Plug these into (\ref{eq_result_centralcharge_chernclass}), we have that  
\begin{equation}
\label{eq_result2_centralcharge_chernclass}
    Z_t(E_C)=(2\pi\sqrt{-1})^{n}\sum\limits_{s=1}^{p-n}\Big(E_{n+s}(-\log \check{t}_s(t))\Big)+(2\pi\sqrt{-1})^{n+1}\frac{1}{2}\big(\sum\limits_{j=1}^pE_{j}{+}V\big),
\end{equation}

where $V=\sum\limits_{v\in\{\text{vertices of }\beta_{\trop,C}\}}\vol(\sigma_v)$ and $E_j=\sum\limits_{\sigma_e\subset F_j}\vol_{\text{int}}(\sigma_e)$.
\end{proof}

The following is an example of tropical curve for $X_\Sigma=\mathbb{CP}^3$.

\begin{example}
The green and brown polyhedral complex in Figure \ref{CP3example_tropical_objets_mixed} are the tropical hypersurface $V(\phi_1)$ and $V(\phi_2)$ with $Q_1$ and $Q_2$ two degree 1 divisors in $\mathbb{CP}^3$. Let $C=Q_1\cap Q_2$, then the blue graph in Figure \ref{CP3example_tropical_objets_mixed} is the tropicalization $\beta_{\trop,C}$ of $C$. The mixed subdivision of $G$ is shown in Figure \ref{CP3example_dual_subdivision_mixed} and the two vertices of $\beta_{\trop,C}$ are dual to the two prisms in the mixed subdivision. Note that we have $\vol(\sigma_v)=\vol_{int}(\sigma_e)=1$ in this example. We can check that $([Q_1]+[Q_2])[Q_1][Q_2]=2$, which is equal to the number of the vertices of $\beta_{\trop,C}$, and $[D_j][Q_1][Q_2]=1$ for the toric divisor $D_j$ corresponding to the one-cone generated by $v_j$, which is equal to the number of $\sigma_e$'s contained in $F_j$.
\begin{figure}[htbp!]
\begin{subfigure}{.4\textwidth}
    \centering
    \includegraphics[scale=0.7]{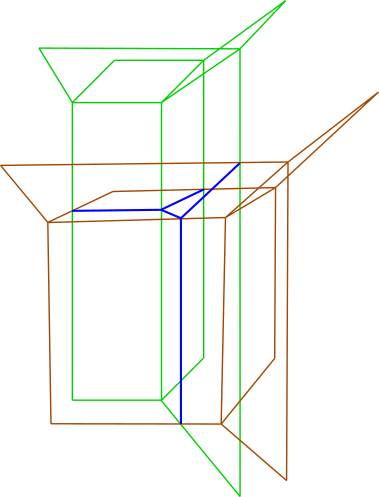}
    \caption{$V(\phi_1)$ and $V(\phi_2)$}
    \label{CP3example_tropical_objets_mixed}
\end{subfigure}
\begin{subfigure}{.4\textwidth}
    \centering
    \includegraphics[scale=0.4]{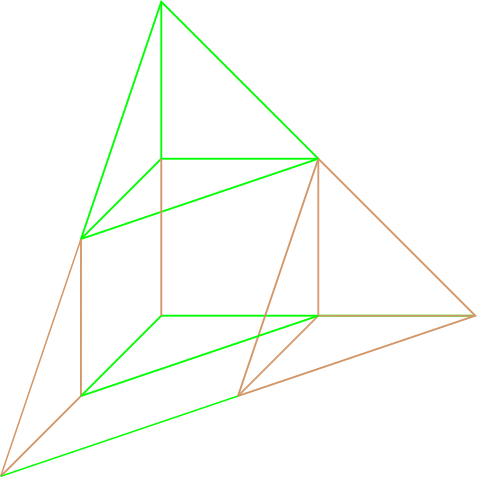}
    \caption{The mixed subdivision}
    \label{CP3example_dual_subdivision_mixed}
\end{subfigure}
\caption{An example of tropical curve in $\mathbb{CP}^3$}
\end{figure}
\end{example}

\subsection{From a Tropical Curve to a Lagrangian Submanifold}
\label{section3.3}
In this subsection we show how to lift the tropical curve $\beta_{\trop,C}$ to a Lagrangian submanifold $L^t_{\trop,C}$ in the Hori-Vafa mirror $Y^t$ and evaluate its central charge $C_t(L_{\trop,C}^t)$.

\subsubsection{The construction of Lagrangian submanifold from a tropical curve}
\label{section3.3.1}
Recall that the Hori-Vafa mirror $Y^t$ is given as in (\ref{eq_local_mirror_symmetry}). In this subsection, rather than considering the multi-parameters $(t_1,t_2,\dots,t_{p-n})\in\C^{p-n}$, we consider the one-parameter family of manifolds by setting $(t_1,t_2,\dots,t_{p-n})=(t^{\lambda_{n+1}},t^{\lambda_{n+2}},\dots,t^{\lambda_{p}})$ for $t\in\R_{>0}$. We further assume that $\sum_{s=1}^{p-n}\lambda_{n+s}D_{n+s}$ is an ample toric divisor. Then we have that
\begin{equation}
\label{eq_lms_one_parameter}
    Y^t=\{(z_1,z_2,\cdots,z_n,u,v)\in(\mathbb{C}^*)^{n}\times\mathbb{C}^2\ |\ uv=W_{\Sigma}(z_1,z_2,\dots,z_n,t)\}
\end{equation}
with
$$
    W_{\Sigma}(z_1,z_2,\dots,z_n,t)=1+\sum_{i=1}^nz_i+\sum_{s=1}^{p-n}t^{\lambda_{n+s}}z^{v_{n+s}}.
$$
We also write 
$$
    W_{\Sigma}(z_1,z_2,\dots,z_n,t)=1+\sum_{j=1}^pt^{\lambda_j}z^{v_j}
$$
by setting $\lambda_1=\lambda_2=\cdots=\lambda_n=0$, and use $W_\Sigma$ or $W_\Sigma(z)$ to denote it as a polynomial in $z_1,z_2,\dots,z_n$ for a fixed $t$ when there is no ambiguity.
\begin{remark}
The choice of $\lambda_{n+1},\lambda_{n+2},\cdots,\lambda_p$ corresponds to a choice of convex multivalued piecewise linear function in the Gross-Siebert model, which is mirror to the choice of K\"{a}hler class $\sum_{s=1}^{p-n}\lambda_{n+s}[\Tilde{D}_{n+s}]$ on $X$. 
\end{remark}

Now let 
$$
    \omega_0=\sum_{i=1}^n\frac{dz_i\wedge d\bar{z}_i}{|z_i|^2}+du\wedge d\bar{u}+dv\wedge d\bar{v}
$$ 
be the canonical symplectic form on $(\mathbb{C}^*)^n\times\mathbb{C}^2$, and $\omega^t:=\omega_0|_{Y^t}$ be the restriction of $\omega_0$ on $Y^t$. Then the submanifold $L^t_{\trop,C}$ in $Y^t$ that we will lift from $\beta_{\trop,C}$ is Lagrangian with respect to $\omega^t$. Consider the moment map $\mu_0$ for $\C^2\times(\C^*)^n$ with respect to $\omega_0$, which is
\begin{align}
\label{momentmap_canonical}
    \mu_0:(\mathbb{C}^*)^n\times\mathbb{C}^2&\rightarrow\mathbb{R}^{n}\times\mathbb{R}_{\geq0}^2\\
    \nonumber(z_1,z_2,\cdots,z_n,u,v)&\mapsto(\log|z_1|,\log|z_2|,\cdots,\log|z_n|,\frac{1}{2}|u|^2,\frac{1}{2}|v|^2),
\end{align}
we want to transfer $\beta_{\trop,C}$ to the base space $\R^n\times\R_{\geq0}^2$ and lift it to a Lagrangian submanifold of $Y^t$ along $\mu_0$. To do it, we need to show what the \textit{amoeba} of $W_{\Sigma}$ looks like.

\begin{definition}
\label{def_amoeba}
Suppose $f\in\C[z_1,\dots,z_n]$ is a polynomial in $z_1,\dots,z_n$ and
\begin{align*}
    \Log:(\mathbb{C}^*)^n&\rightarrow\mathbb{R}^n,\\
    (z_1,z_2,\cdots,z_n)&\mapsto(\log|z_1|,\log|z_2|,\cdots,\log|z_n|),
\end{align*}
is the logarithm map from $(\mathbb{C}^*)^n$ to $\mathbb{R}^n$, then 
the \textit{amoeba} $\mathscr{A}(f)$ of $f$ is the image of the zero locus of $f$ under $\Log$,
i.e.,
$$
    \mathscr{A}(f)=\Log(\{(z_1,\cdots,z_n)\ |\ f(z_1,\cdots,z_n)=0)\}).
$$
We call $\mathscr{A}_t(f)=\Log_t(\mathscr{A}(f))$ the \textit{rescaled amobea} of $f$, where $\Log_t(\cdot)=\frac{\Log(\cdot)}{\log t}$.
\end{definition}

The rescaled amobea $\mathscr{A}_t(W_{\Sigma})$ has a tropical hypersurface as its `skeleton'.

\begin{theorem}[\cite{Mikhalkin_2004}, Corollary 6.4]
\label{thm_mik_trop_amoeba}
The rescaled amoeba $\mathscr{A}_t(W_{\Sigma})$ converges in the Hausdorff metric to the tropical variety corresponding to the tropical polynomial $W_{\Sigma,\trop}$ as $t\rightarrow 0^+$, i.e.,
$$
    \lim_{t\rightarrow0^+} \mathscr{A}_t(W_{\Sigma})=-V(W_{\Sigma,\trop}),
$$
where $W_{\Sigma,\trop}=0\oplus x_1\oplus x_2\oplus\cdots\oplus x_n\oplus(-\lambda_{n+1}\odot\langle v_{n+1},x\rangle)\oplus(-\lambda_{n+2}\odot\langle v_{n+2},x\rangle)\oplus\cdots\oplus(-\lambda_{p}\odot\langle v_{p},x\rangle)$.
\end{theorem}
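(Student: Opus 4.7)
The plan is to prove two uniform inclusions: for any $\epsilon>0$ and all sufficiently small $t>0$, every point of $\mathscr{A}_t(W_\Sigma)$ lies within $\epsilon$ of $-V(W_{\Sigma,\trop})$, and conversely. Write $W_\Sigma=\sum_{j=0}^p M_j$ with $M_j(z)=t^{\lambda_j}z^{v_j}$ (setting $v_0=0,\ \lambda_0=0$ for the constant term). For $y=\Log_t(z)$ and $t\in(0,1)$ one has $\log|M_j(z)|=(\lambda_j+\langle v_j,y\rangle)\log t$; since $\log t<0$, the monomials of largest modulus are precisely those minimizing $\lambda_j+\langle v_j,y\rangle$, and under the substitution $x=-y$ these are exactly the monomials attaining the tropical maximum in $W_{\Sigma,\trop}(x)$. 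This dictionary between ``dominant monomial'' on the complex side and ``maximizer'' on the tropical side is what drives both inclusions.

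For the inclusion $\mathscr{A}_t(W_\Sigma)\subset N_\epsilon(-V(W_{\Sigma,\trop}))$, suppose $y_0$ is at distance $>\epsilon$ from $-V(W_{\Sigma,\trop})$. Then on a neighborhood $U$ of $y_0$ the minimum of $\{\lambda_j+\langle v_j,y\rangle\}_{j}$ is attained by a unique index $j_0$ with a uniform gap $\delta=\delta(\epsilon)>0$ to the next-smallest value. For any $z$ with $\Log_t(z)\in U$ one has $|M_j(z)/M_{j_0}(z)|\le t^{\delta}$ for all $j\neq j_0$, so $|W_\Sigma(z)|\ge|M_{j_0}(z)|\bigl(1-(p+1)t^{\delta}\bigr)>0$ once $t$ is small, and therefore $U\cap\mathscr{A}_t(W_\Sigma)=\emptyset$. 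For the reverse inclusion, at $y_0$ with $-y_0\in V(W_{\Sigma,\trop})$ at least two monomials $M_a, M_b$ tie for the minimum with gap $\delta$ to the others. Parametrize $z$ with $\Log_t(z)=y_0$ by varying the phase of a single coordinate whose exponent distinguishes $v_a$ from $v_b$; this sweeps the ratio $M_a(z)/M_b(z)$ around the unit circle, so $M_a(z)+M_b(z)$ passes through $0$, while the remaining monomials stay bounded in modulus by $t^{\delta}\max(|M_a|,|M_b|)$. A Rouché/continuity argument then produces a genuine zero of $W_\Sigma$ in the torus fiber over a point within $O(t^{\delta})$ of $y_0$, hence a point of $\mathscr{A}_t(W_\Sigma)$ within $o(1)$ of $y_0$.

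The main obstacle, and the technical heart of Mikhalkin's argument, is the case where three or more monomials tie simultaneously, in particular at the vertices of $V(W_{\Sigma,\trop})$: there the two-term phase cancellation must be replaced by a genuine nonlinear cancellation governed by the \emph{initial polynomial} $\sum_{j\in J_0}t^{\lambda_j}z^{v_j}$ that retains only the tying monomials. Solvability of this initial system in $(\C^*)^n$ is guaranteed by Kushnirenko's theorem (the Newton polytope of the initial form has positive volume because $y_0$ lies in the dual of a mixed cell of positive dimension), and a topological degree computation upgrades a solution of the initial system to a true zero of $W_\Sigma$ in the fiber over a nearby point. Hausdorff convergence on all of $\R^n$, rather than on compact sets only, then follows because $\mathscr{A}_t(W_\Sigma)$ and $V(W_{\Sigma,\trop})$ share the same recession fan, namely the normal fan of the Newton polytope of $W_\Sigma$, so the compact estimate extends to a global bound.
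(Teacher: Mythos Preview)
The paper does not prove this statement at all: it is quoted verbatim as Corollary~6.4 of \cite{Mikhalkin_2004} and used as a black box. There is therefore no proof in the paper to compare your proposal against.

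That said, your sketch is essentially the standard argument and is sound. One simplification you may wish to make: for the reverse inclusion you do not actually need a separate treatment of the higher-codimension strata of $V(W_{\Sigma,\trop})$. The top-dimensional cells, where exactly two monomials tie, are dense in the tropical hypersurface; your two-monomial phase-and-Rouch\'e argument already produces a zero of $W_\Sigma$ over a point arbitrarily close to any such $y_0$, and density then covers the vertices and lower-dimensional faces automatically. The invocation of Kushnirenko's theorem for the initial form at a vertex is therefore unnecessary (and, as stated, slightly off: Kushnirenko's theorem counts common zeros of $n$ polynomials, whereas here you only need that a single non-monomial Laurent polynomial has a zero in $(\C^*)^n$, which is elementary). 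Your remark about the common recession fan to pass from compact to global Hausdorff convergence is correct and is indeed needed.
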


\begin{remark}
We can regard $W_{\Sigma}$ as a polynomial with coefficients in the field $K\{t\}$ of Puiseux series, then $W_{\Sigma,\trop}$ is given by taking the valuation of its coefficients and regard it as a tropical polynomial.
\end{remark}

Now to see what $V(W_{\Sigma,\trop})$ looks like, note that its Newton polytope is the polytope $P_V$ as in Corollary \ref{cor_Fanocor1} and then $V(W_{\Sigma,\trop})$ can be described by Proposition \ref{prop_trop_dual_polytope}. The Legendre transform of  $W_{\Sigma,\trop}$ is the support function corresponding to the ample divisor $\sum_{s=1}^{p-n}\lambda_{n+s}D_{n+s}$, so it is strictly convex according to Proposition \ref{prop_Fano-support}. So the subdivision $T_{W_{\Sigma,\trop}}$ of $P_V$ is given by connecting the origin to $v_j$, thus there is a unique compact component of $\R^n\backslash V(W_{\Sigma,\trop})$ which is given by $\{(x_1,x_2,\cdots,x_n)\in\R^n\,|\,-\lambda_{j}+\langle v_{j},x\rangle\leq 0 \text{ for }j=1,2,\dots,p\}$. So there is a unique compact component in $\R^n\backslash -V(W_{\Sigma,\trop})$ which is given by
\begin{equation}
\label{eq_G_trop}
    G_{\trop}:=\{x\in\R^n\ |\ \chi_j(x)\geq 0\},
\end{equation}
where we set
\begin{equation*}
    \chi_j(x_1,x_2,\dots,x_n)=
    \lambda_j+\langle v_j,x\rangle, j=1,2,\dots,p.
\end{equation*}
Note that $G_{\trop}$ is the polytope corresponding to the toric divisor $\sum_{s=1}^{p-n}\lambda_{n+s}D_{n+s}$, thus it has the same shape of $G_{\text{can}}$. We denote the facet of $G_{\trop}$ dual to $v_j$ by $F_{j,\trop}$.

Now to transfer $\beta_{\trop,C}$ to $\R^n\times\R_{\geq0}^2$, we identify the $\R^n$ where $G_\trop$ lies on with $\R^n\times(0,0)$. Note that $\beta_{\trop,C}$ is dual to $G=G_1+G_2+\cdots+G_{n-1}$, which also has the same shape as $G_{\text{can}}$. Thus we can transfer $\beta_{\trop,C}$ to a tropical curve in $\R^n\times(0,0)$ such that the unbounded edges of $\beta_{\trop,C}$ with direction $v_j$ are dual to $F_{j,\trop}$. We further require that the intersection points of $\beta_{\trop,C}$ and $F_{j,\trop}$ are contained in $U_j$ as in Lemma \ref{lemma_defomation_near_amoeba}, which can be guaranteed by shrinking the size of $\beta_{\trop,C}$. We remove the part of it which is outside of $G_{\trop}$ and then it is a tropical curve with ends on $\partial G_{\trop}$. We abuse the notation of $\beta_{\trop,C}$ to denote it.

\begin{remark}
\label{remark_trop_curve_under_moment_map}
The tropical curve $\beta_{\trop,C}$ with ends on $\partial G_{\trop}$ has the same shape as the skeleton of the image of the holomorphic curve $C$ under the moment map of $X_\Sigma$ with respect to the K\"{a}hler class $\sum_{s=1}^{p-n}\lambda_{n+s}[D_{n+s}]$.
\end{remark}

\begin{figure}[htbp!]
    \centering
    \includegraphics[scale=0.5]{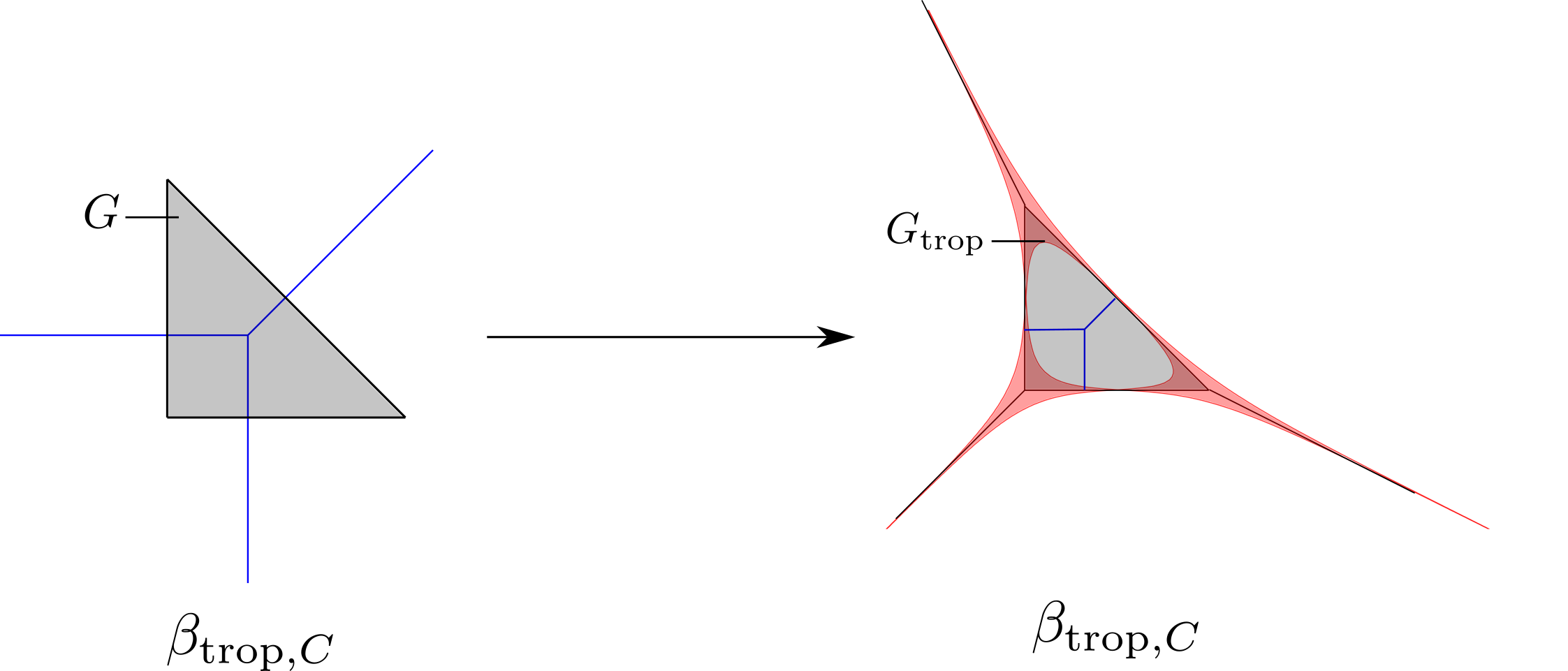}
    \caption{Transfer of a tropical curve}
    \label{fig_transfer_of_a_trop_curve}
\end{figure}

Now we show how to lift the tropical curve $\beta_{\trop,C}\subset \R^n\times\R_{\geq0}^2$ to a piecewise Lagrangian closed submanifoldin $Y^t$. The idea is that we decompose $\beta_{\trop,C}$ into different parts, lift them to different pieces and then glue them together. 

We decompose $\beta_{\trop,C}$ into three different types:
$$
    \beta_{\trop,C}=\bigcup\limits_{e\in\{\text{edges of }\beta_{\trop,C}\}}\beta_{e}\cup\bigcup\limits_{v\in\{\text{vertices of }\beta_{\trop,C}\}}\beta_{v}\cup\bigcup\limits_{\substack{j=1,\\ a\in\{\text{ends of }\beta_{\trop,C}\text{ at }F_{j,\trop}\}}}^p\beta_{a},
$$
where $\beta_e$ is the edge itself when $e$ is an interior edge and $e\backslash\beta_{a}$ when $e$ has an end $a$, $\beta_v$ is the vertex $v$ itself and $\beta_a$ is a segment of the edge with end $a\in F_{j,\trop}$ such that $\beta_a\subset U_j$ and the endpoints of $\beta_a$ are $a$ and a point in $U_j\backslash U_j'$, where $U_j,U_j'$ are given as in Lemma \ref{lemma_defomation_near_amoeba}.

We need some preparation before we do the lifting. 

\begin{definition}
\label{def_tropcurve_weight_balance}
The weight of a tropical curve $\beta$ is a map $w:\{\text{edges of } \beta\}\rightarrow\mathbb{N}$. The tropical curve $\beta$ is called \textit{balanced} if for each vertex $v$ we have that
$$
    \sum\limits_{e\text{ incident with }v}{\epsilon_{e,v}}w(e)\xi_e=0,
$$
where $\xi_e$ is the primitive tangent vector of $e$ and $\epsilon_{e,v}=+/-$ if $\xi_e$ points out of/towards $v$.
\end{definition}

\begin{definition}
\label{def_cycle_in_torus}
Suppose $\sigma$ is a $k-$dimensional integral polytope in $\R^n$, and $s(x)$ is a chosen point in $\Log_t^{-1}(x)$. Then the $k-$cycle $\sigma(s(x))$ in $\Log_t^{-1}(x)$ induced by $\sigma$ is the cycle  $\sigma\hookrightarrow\R^n\xrightarrow{\exp(2\pi\sqrt{-1}\cdot)} T^n\xrightarrow{\cdot s(x)}\Log_t^{-1}(x)$. 
\end{definition}

\begin{lemma}
\label{lemma_balance_of_trop_curve}
Suppose $e$ is an edge of $\beta_{\trop,C}$ which is dual to a cell $\sigma_e$ in the subdivision $T_f$. If we set the weight of $\beta_{\trop,C}$ as $w(e)=\vol_{\text{int}}(\sigma_e)$, then $\beta_{\trop,C}$ is balanced.
\end{lemma}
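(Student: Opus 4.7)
The plan is to reduce the balancing identity to the classical polytope relation
\[
    \sum_F \vol^{\mathbb{Z}}_{n-1}(F)\, n_F^{\mathrm{out}} = 0,
\]
applied to the $n$-dimensional mixed cell $\sigma_v$. Here $F$ ranges over facets of $\sigma_v$, $n_F^{\mathrm{out}}$ is the primitive integer outward normal, and $\vol^{\mathbb{Z}}_{n-1}$ is the intrinsic integer $(n-1)$-volume in $\mathrm{aff}(F)$. This is the integer form of Minkowski's identity, obtained from the Euclidean statement via the fact that a primitive integer normal $n$ to a rational hyperplane has Euclidean length equal to the covolume of the hyperplane lattice.

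First I would unpack $\sigma_v$. By Proposition~\ref{prop_dual_mixed_cell} and the description of mixed cells preceding Definition~\ref{def_tropcurve}, the dual cell $\sigma_v$ of a vertex $v$ of $\beta_{\trop,C}$ is of type $\sigma^{1,\dots,1,2,1,\dots,1}$, so
\[
    \sigma_v = \sigma_{i_0}^2 + \sum_{k\ne i_0} \sigma_k^1
\]
with exactly one $2$-dimensional Minkowski summand. Using the standard parametrization of the faces of a Minkowski sum by linear functionals (each face is the Minkowski sum of the $\ell$-minimizing faces of the summands), the facets of $\sigma_v$ split into two classes. \emph{Type A:} $\tau + \sum_{k\ne i_0}\sigma_k^1$ where $\tau$ runs over the edges of $\sigma_{i_0}^2$; these are mixed cells of type $\sigma^{1,\dots,1}$ and, by Proposition~\ref{prop_dual_mixed_cell}, are dual to the edges of $\beta_{\trop,C}$ incident to $v$. \emph{Type B:} $\sigma_{i_0}^2 + \{p\} + \sum_{k'\ne i_0,k}\sigma_{k'}^1$, where $k\ne i_0$ and $p$ is one endpoint of $\sigma_k^1$; these are not dual to any edge of $\beta_{\trop,C}$.

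For each $k\ne i_0$ the two Type B facets coming from the two endpoints of $\sigma_k^1$ are parallel translates of one another (differing by the integer vector along $\sigma_k^1$), so they have equal intrinsic integer $(n-1)$-volume and opposite primitive outward normals; their contributions to Minkowski's identity cancel in pairs. What remains is
\[
    0 = \sum_{e \text{ at } v} \vol^{\mathbb{Z}}_{n-1}(\sigma_e)\, n_{\sigma_e}^{\mathrm{out}}.
\]
Tropical duality identifies $n_{\sigma_e}^{\mathrm{out}}$ with the primitive tangent vector of $e$ pointing away from $v$, i.e.\ $\epsilon_{e,v}\xi_e$ in the notation of Definition~\ref{def_tropcurve_weight_balance}; combined with the assigned weight $w(e) = \vol_{\mathrm{int}}(\sigma_e) = \vol^{\mathbb{Z}}_{n-1}(\sigma_e)$, this is exactly $\sum_{e\ni v}\epsilon_{e,v}w(e)\xi_e = 0$.

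The hardest part is the bookkeeping of lattice conventions: checking that the primitive integer outward normal of $\sigma_e$ inside $\sigma_v$ really equals $\epsilon_{e,v}\xi_e$ with the correct sign (this is the usual duality between a tropical variety and the subdivision of its Newton polytope, but the sign needs to be pinned down from how $\beta_{\trop,C}$ is cut out of the hypersurfaces $V(f_k)$), and verifying that the author's $\vol_{\mathrm{int}}$, originally defined via the projection $\Psi_j$ for cells inside a facet $F_{j,\trop}$ of $G_{\trop}$, coincides with the intrinsic integer $(n-1)$-volume for every mixed cell $\sigma_e$, not only those lying on $\partial G_{\trop}$.
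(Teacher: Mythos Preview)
Your argument is correct, and the core observation coincides with the paper's: the facets of $\sigma_v$ that are \emph{not} of the form $\sigma_e$ for an incident edge $e$ come in parallel pairs (your Type~B, from the two endpoints of each $1$-cell summand $\sigma_k^1$) and cancel. The paper packages this homologically rather than via Minkowski's identity: it maps $\sigma_v$ into the torus $\Log_t^{-1}(v)$ by $\exp(2\pi\sqrt{-1}\,\cdot)$, where your Type~B pairs become literally equal chains (the integer endpoints of $\sigma_k^1$ map to the same point), so $0=[\partial\sigma_v(s(v))]=\sum_e[\sigma_e(s(v))]$ in $H_{n-1}(T^n)$; Poincar\'e duality then converts this into $\sum_e\epsilon_{e,v}w(e)\xi_e=0$. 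Your polytope approach is more self-contained and avoids torus homology altogether; the paper's formulation, on the other hand, introduces precisely the cycles $\sigma_e(s(x))$ and the duality $[\sigma_e]\leftrightarrow w(e)[\xi_e]$ that are reused verbatim in constructing the Lagrangian pieces $\Gamma_e,\Gamma_v$ and in the period integrals (e.g.\ Lemma~\ref{lemma_periodevaluation1}), so it does double duty. The sign and $\vol_{\mathrm{int}}$ bookkeeping you flag at the end is exactly what the paper's Poincar\'e-duality statement absorbs in one line.
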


\begin{proof}
    Suppose $v$ is a vertex of $\beta_{\trop,C}$ and it is dual to a cell $\sigma_v$ of $T_f$, then $\sigma_v$ is of the form 
    $\sigma_v=\sigma_1+\sigma_2+\cdots+\sigma_{n-1}$ with one of $\sigma_k$'s a $2$-cell and the others are $1$-cells. Say $\sigma_1$ is a $2$-cell. Then
    $$
        \partial\sigma_v=(\partial\sigma_1+\sigma_2+\cdots+\sigma_{n-1})+(\sigma_1+\partial\sigma_2+\cdots+\sigma_{n-1})+\cdots+(\sigma_1+\sigma_2+\cdots+\partial\sigma_{n-1}).
    $$
    Take a point $s(v)\in\Log_t^{-1}(v)$, then we have that
    \begin{align*}
        0=&[\partial\sigma_v(s(v))]\\
        =&[(\partial\sigma_1+\sigma_2+\cdots+\sigma_{n-1})(s(v))]+\cdots+[(\sigma_1+\sigma_2+\cdots+\partial\sigma_{n-1})(s(v))]\\
        =&\sum\limits_{e\text{ incident with }v}[\sigma_e(s(v))]
    \end{align*}
    in $H_{n-1}(\Log_t^{-1}(v))$, where the third equation is because that $[(\partial\sigma_1+\sigma_2+\cdots+\sigma_{n-1})(s(v))]=\sum\limits_{e\text{ incident with }v}[\sigma_e(s(v))]$ and the other terms are $0$ since $\sigma_k$'s have integer endpoints. Now note that ${\epsilon_{e,v}}w(e)[\xi_e(s(v))]$ is the Poincar\'{e} dual of $[\sigma_e(s(v))]$, so we have that 
    $$
        \sum\limits_{e\text{ incident with }v}{\epsilon_{e,v}}w(e)[\xi_e(s(v))]=0 
    $$
    in $H_1(\Log_t^{-1}(v))$ and thus
    $$
        \sum\limits_{e\text{ incident with }v}{\epsilon_{e,v}}w(e)\xi_e=0.
    $$
\end{proof}

We need to deform $Y^t$ near the ends of $\beta_{\trop,C}$ at $F_{j,\trop}$ to a simpler model, for the purpose of the lifting near the ends. We have the following lemma.
\begin{lemma}
\label{lemma_defomation_near_amoeba}
    For sufficiently small $|t|$, there exist open sets $U_j''\subset\subset U_j'\subset\subset U_j$ in $\R^n$ which contain an open subset of $F_{j,\trop}$, and positive real numbers $R>R'>R''>\max\{\frac{1}{2}\max\limits_{z\in \Log_t^{-1}(U_j)}\{1+\sum\limits_{k=1}^p|t^{\lambda_k}z^{v_k}|,1\}$, such that there exists a symplectomorphism
    $$
        \Phi_{j}:Y^t_j\rightarrow Y^t_{j,\text{simple}},
    $$
    where 
    $$
        Y^t_j:=Y^t\cap\mu_0^{-1}([0,R]^2\times(\log t\cdot U_j)),
    $$ 
    $$
        Y^t_{j,\text{simple}}\cap\mu_0^{-1}([0,R'']^2\times(\log t\cdot U_j''))=(uv=1+t^{\lambda_j}z^{v_j})\cap\mu_0^{-1}([0,R'']^2\times(\log t\cdot U_j'')),
    $$ and the symplectic forms on $Y^t_j$ and $Y^t_{j,\text{simple}}$ are induced from the standard symplectic form $\omega_0$ on $\C^2\times(\C^*)^n$. Furthermore, $\Phi_{j}$ is an identity on $Y_j^t\backslash\mu_0^{-1}([0,R']^2\times(\log t\cdot U'_j))$ and for a point $x\in U_j\cap F_{j,\trop}$, $(x+\R\cdot v_j)\cap \Log_t(\{z\,|\,(0,0,z)\in Y^t_{j,\text{simple}}\})$ is connected.
\end{lemma}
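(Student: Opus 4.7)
The plan is to interpolate between the defining polynomial $W_\Sigma$ and the simplified polynomial $1+t^{\lambda_j}z^{v_j}$ via a smooth cut-off supported near the facet $F_{j,\trop}$, and then convert the resulting smooth isotopy of hypersurfaces into an ambient Hamiltonian isotopy by a parametric Moser argument.

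First I would choose nested open sets $U''_j\subset\subset U'_j\subset\subset U_j$ contained in a small neighbourhood of an open piece of the relative interior of $F_{j,\trop}$. Because $F_{j,\trop}$ is a facet of $G_\trop$, on $F_{j,\trop}$ itself $\chi_j$ vanishes while $\chi_k>0$ for every $k\neq 0,j$ (here $k=0$ labels the constant monomial). Shrinking $U_j$ if necessary, I obtain $\chi_k\ge\epsilon$ on $U_j$ for some $\epsilon>0$ and every $k\notin\{0,j\}$, so $|t^{\lambda_k}z^{v_k}|=t^{\chi_k(\Log_t z)}\le t^{\epsilon}$ whenever $\Log_t z\in U_j$ and $0<t<1$. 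Picking a smooth cut-off $\rho:\R^n\to[0,1]$ with $\rho\equiv 1$ on $U''_j$ and $\rho\equiv 0$ off $U'_j$, define the one-parameter family of polynomials
\begin{equation*}
W_s(z,t):=W_\Sigma(z,t)-s\,\rho(\Log_t z)\sum_{k\notin\{0,j\}}t^{\lambda_k}z^{v_k},\qquad s\in[0,1],
\end{equation*}
so that $W_0=W_\Sigma$, $W_1=1+t^{\lambda_j}z^{v_j}$ on $\Log_t^{-1}(U''_j)$, and $W_s=W_\Sigma$ off $\Log_t^{-1}(U'_j)$ for every $s$. Choose $R>R'>R''$ as required by the statement and set $Y^t_{j,s}:=\{uv=W_s(z,t)\}\cap\mu_0^{-1}([0,R]^2\times(\log t\cdot U_j))$, with $Y^t_{j,\mathrm{simple}}:=Y^t_{j,1}$. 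I then need to check that each $Y^t_{j,s}$ is a smooth symplectic submanifold of $(\C^2\times(\C^*)^n,\omega_0)$ for $|t|$ small: away from $\{u=v=0\}$ smoothness is automatic, and on that singular locus the dominant part of $dW_s$ is the holomorphic form $t^{\lambda_j}d(z^{v_j})\neq 0$, with non-holomorphic correction from $d\rho$ and from the suppressed monomials of size $O(t^{\epsilon})$; both smoothness and non-degeneracy of $\omega_0|_{TY^t_{j,s}}$ are therefore inherited by continuity from the endpoints $s=0,1$, which on their respective regions are honest complex (Kähler) hypersurfaces.

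The core of the argument is the Moser step. All $Y^t_{j,s}$ coincide outside the compact set $K:=\mu_0^{-1}([0,R']^2\times(\log t\cdot U'_j))$, and the family is a smooth isotopy of codimension-$2$ symplectic submanifolds of the fixed ambient symplectic manifold. The parametric symplectic isotopy extension theorem then produces a time-dependent Hamiltonian vector field on $\C^2\times(\C^*)^n$, compactly supported in $K$, whose flow $\Phi^s$ satisfies $\Phi^s(Y^t_{j,0})=Y^t_{j,s}$ and equals the identity off $K$; I set $\Phi_j:=\Phi^1|_{Y^t_j}$. The main technical obstacle is constructing this generating Hamiltonian: since $F_s:=uv-W_s$ is $\C$-valued, matching $\partial_s F_s$ along the flow amounts to two real conditions on the vector field, and they must be solved simultaneously by a single (real) Hamiltonian. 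Away from $\{u=v=0\}$ the real $1$-forms $dF_s^{\mathrm{Re}},dF_s^{\mathrm{Im}}$ span an $\omega_0$-symplectic $2$-plane transverse to $Y^t_{j,s}$, so the standard symplectic tubular neighbourhood splitting reduces the problem to realising a prescribed normal deformation by a Hamiltonian supported in a neighbourhood of $Y^t_{j,s}$; near $\{u=v=0\}$ one instead uses the local holomorphic model $uv=1+t^{\lambda_j}z^{v_j}$ provided by $W_1$ and runs the usual complex Moser argument, after which the two constructions can be glued with a partition of unity inside $K$.

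Finally, for the connectedness claim: on $\Log_t^{-1}(U''_j)$ the zero locus of $W_1$ has $\Log_t$-image exactly $\{\chi_j=0\}\cap U''_j\subset F_{j,\trop}$, meeting $x+\R v_j$ in a single point whenever $x\in U''_j\cap F_{j,\trop}$. On the complement of $\Log_t^{-1}(U'_j)$ the zero locus is the rescaled amoeba $\mathscr{A}_t(W_\Sigma)$, which by Theorem \ref{thm_mik_trop_amoeba} lies in an arbitrarily small Hausdorff neighbourhood of $-V(W_{\Sigma,\trop})$ for $|t|$ small, hence meets $x+\R v_j$ in a connected subset close to $F_{j,\trop}$. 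By shrinking $U_j$ and $|t|$ so that the interpolation region $U'_j\setminus U''_j$ remains thin in the $v_j$-direction, these pieces glue into a single connected subset of $x+\R v_j$, which gives the required connectivity and completes the proof.
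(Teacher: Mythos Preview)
Your overall strategy---interpolate via a cutoff and apply a Moser-type argument---matches the paper's, but there is a genuine gap in your setup. Your family $Y^t_{j,s}=\{uv=W_s(z,t)\}\cap\mu_0^{-1}([0,R]^2\times(\log t\cdot U_j))$ uses a cutoff only in the $z$-variable. Consequently, for $z$ with $\rho(\Log_t z)\neq 0$ and $(u,v)$ near the boundary $\{|u|^2=2R\}\cup\{|v|^2=2R\}$, the defining equation $uv=W_s(z,t)$ still depends on $s$, so the manifolds $Y^t_{j,s}$ do \emph{not} coincide outside your claimed set $K=\mu_0^{-1}([0,R']^2\times(\log t\cdot U'_j))$. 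This breaks the compactly-supported isotopy extension you invoke: any flow carrying $Y^t_{j,0}$ to $Y^t_{j,1}$ must move points near the $(u,v)$-boundary of the domain, and there is no reason such a flow stays inside $\mu_0^{-1}([0,R]^2\times(\log t\cdot U_j))$ or restricts to the identity outside $K$. The paper repairs this by introducing a second cutoff $\tau(u,v)$, supported in $\{|u|,|v|<\sqrt{2R'}\}$, so that the interpolation reads $uv=W_\Sigma(z)-\mu\,\tau(u,v)\rho_{j,t}(z)\sum_{k\neq j}t^{\lambda_k}z^{v_k}$; this makes the family genuinely constant outside the compact $\mathcal R'\times\Log_t^{-1}(U'_j)$, at the price that the defining function now depends on $(u,v)$ as well, which forces one to check smoothness and symplecticity also in the region where $\tau$ varies.

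A second, more methodological point: you aim for an \emph{ambient} Hamiltonian isotopy of $\C^2\times(\C^*)^n$ carrying $Y^t_{j,0}$ onto $Y^t_{j,1}$, and you correctly flag the difficulty that $\partial_s F_s$ imposes two real conditions on a single generating Hamiltonian. The paper sidesteps this entirely by taking the intrinsic route: once the family $Y_\mu$ is smooth, symplectic, and constant off a compact set, Ehresmann's lemma trivialises it as $\varphi_\mu:Y_\mu\to Y_0$, and then Moser's trick applied to the pulled-back forms $\varphi_\mu^*(\omega_0|_{Y_\mu})$ versus $\omega_0|_{Y_0}$ on the fixed manifold $Y_0$ yields the symplectomorphism directly---no ambient Hamiltonian and no gluing of local models is needed. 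Finally, your connectedness argument via Hausdorff convergence is plausible but imprecise on the interpolation shell $U'_j\setminus U''_j$; the paper instead arranges $\rho_j$ to be \emph{constant} along each segment $(x+\R v_j)\cap\{|\chi_j|<c''_j\}$, so that on any such line the zero locus is that of a single function $W_\Sigma-\rho\sum_{k\neq j}t^{\lambda_k}z^{v_k}$ with $\rho$ fixed, whose amoeba along the line is connected by an a priori choice of a slightly larger neighbourhood $V_j$.
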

We put the proof of this lemma in Appendix \ref{appx_proof_of_defomation_lemmma}. Note that for $(u,v,z)\in Y_{j,\simple}^t$, we have that $uv=W_{\Sigma,j}(u,v,z)$, where 
$$
    W_{\Sigma,j}(u,v,z)=W_{\Sigma}(z)-\tau(u,v)\rho_{j,t}(z)\sum_{k\neq j}t^{\lambda_k}z^{v_k}
$$ 
as in (\ref{eq_W_Sigma,j}). When $|u|<\sqrt{2R''}$ and $|v|<\sqrt{2R''}$, we have that $\tau(u,v)=1$, in this case, we use $W_{\Sigma,j}(z)$ or $W_{\Sigma,j}$ to denote $W_{\Sigma}(z)-\rho_{j,t}(z)(\sum_{k\neq j}t^{\lambda_k}z^{v_k})$ and regard it as a polynomial in $z$. The rescaled amobea $\mathscr{A}_t(W_{\Sigma,j})$ is then equal to $\Log_t(\{z\,|\,(0,0,z)\in Y^t_{j,\text{simple}}\})$. So the symplectomorphism $\Phi_j$ deforms the rescaled amoeba $\mathscr{A}_t(W_{\Sigma})$ to the rescaled amoeba $\mathscr{A}_t(W_{\Sigma,j})$, as shown in Figure \ref{fig_defomation_of_amobea}.
\begin{figure}[htbp!]
    \centering
    \includegraphics[scale=0.5]{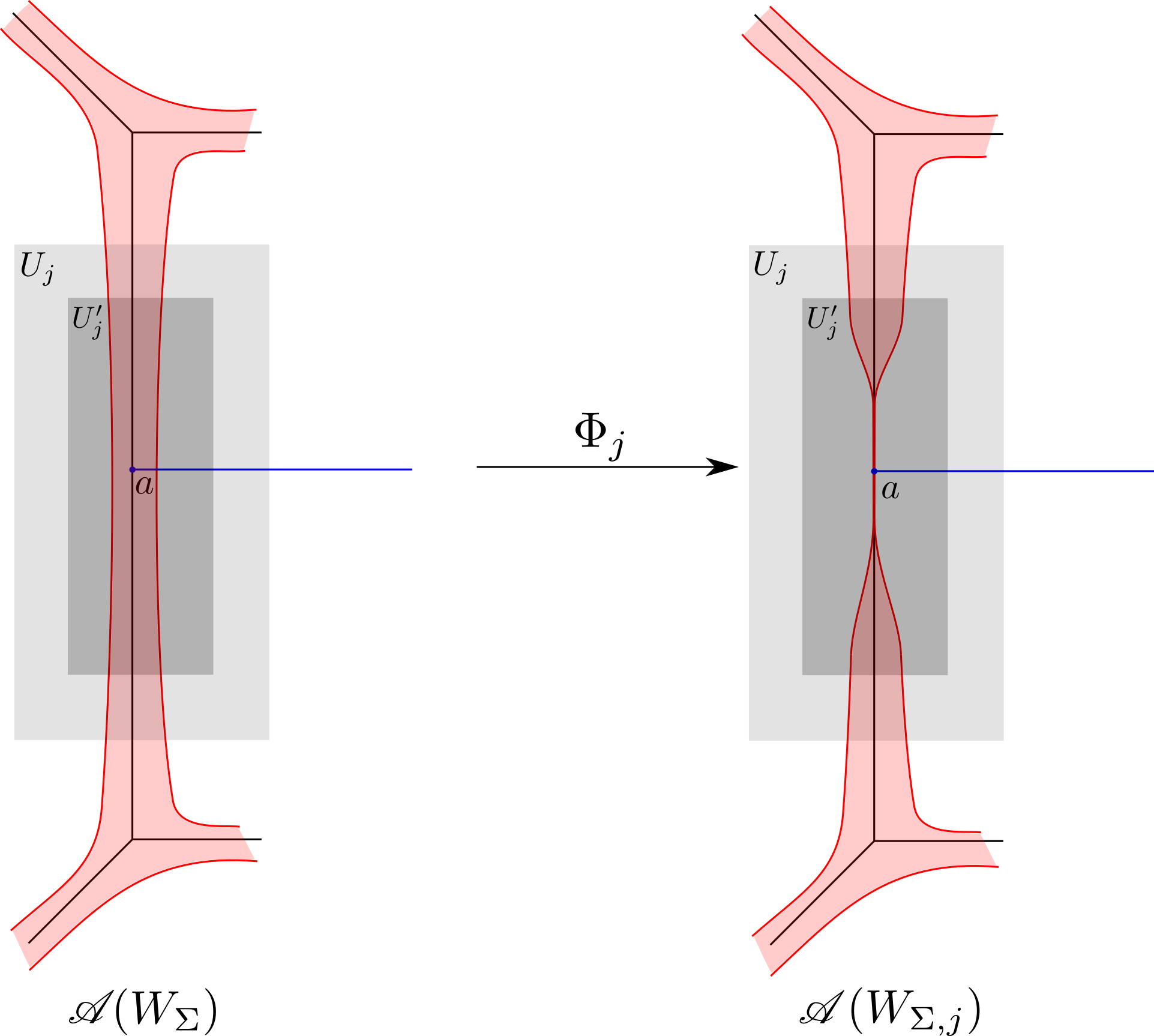}
    \caption{The deformation of the amoeba}
    \label{fig_defomation_of_amobea}
\end{figure}

Now let us do the lifting.

\begin{itemize}
    \item For $\beta_e$, suppose the edge $e$ is parametrized by $r_e(\tau)\in\R^n$, $\tau\in[0,1]$, its primitive tangent vector is $\xi_e$, which has the same direction as $r_e'(\tau)$, and it is dual to a cell $\sigma_e$ of $T_f$. Let $s_{\text{real}>0}(r_e(\tau))$ be the positive real locus of $\Log_t^{-1}(r_e(\tau))$, then the lifting $\Gamma_e$ is
    \begin{align*}
        \Gamma_e:=\bigcup\limits_{\tau\in[0,1]}\{&(z,u,v)\in Y^t\ |\ z\in \sigma_e(s_{\text{real}>0}(r_e(\tau))),|u|=|v|\}.       
    \end{align*}
    The orientation of $\Gamma_e$ is taken as $\frac{\partial}{\partial u}\wedge\frac{\partial}{\partial\tau}\wedge or(\sigma_e(s_{\text{real}>0}(r_e(\tau))))$, where $or(\sigma_e(s_{\text{real}>0}(r_e(\tau))))$ is determined through $\xi_e(s_{\text{real}>0}(r_e(\tau)))\wedge or(\sigma_e(s_{\text{real}>0}(r_e(\tau))))=or(\Log_t^{-1}(r_e(\tau)))$. Since $\frac{\partial}{\partial\tau}$ has the same direction as $\xi_e$, the orientation of $\Gamma_e$ is independent of the direction of $\xi_e$.
    \item For $\beta_v$, suppose the vertex $v$ is dual to a cell $\sigma_v$ of $T_f$, let $s_{\real>0}(v)$ be the positive real point of $\Log_t^{-1}(v)$, then the lifting $\Gamma_v$ is 
    \begin{align*}
        \Gamma_v:=\{&(z,u,v)\in Y^t\ |\ z\in \sigma_v(s_{\real>0}(v)), |u|=|v|\}.  
    \end{align*}
    The orientation of $\sigma_v(s_{\real>0}(v))$ is induced from the orientation of $\Log_t^{-1}(v)$. Note that $\partial\sigma_v(s_{\real>0}(v))=\bigcup\limits_{e\text{ incident with }v}\sigma_e(s_{\real>0}(v))$, thus $\partial\Gamma_v$ and $\partial\Gamma_e$ share a component, whose orientations induced from $\Gamma_v$ and $\Gamma_e$ are opposite. So $\Gamma_v$ glues the liftings of $\Gamma_e$'s for the incident edges together.
    \begin{figure}[htbp!]
        \centering
        \includegraphics[scale=0.5]{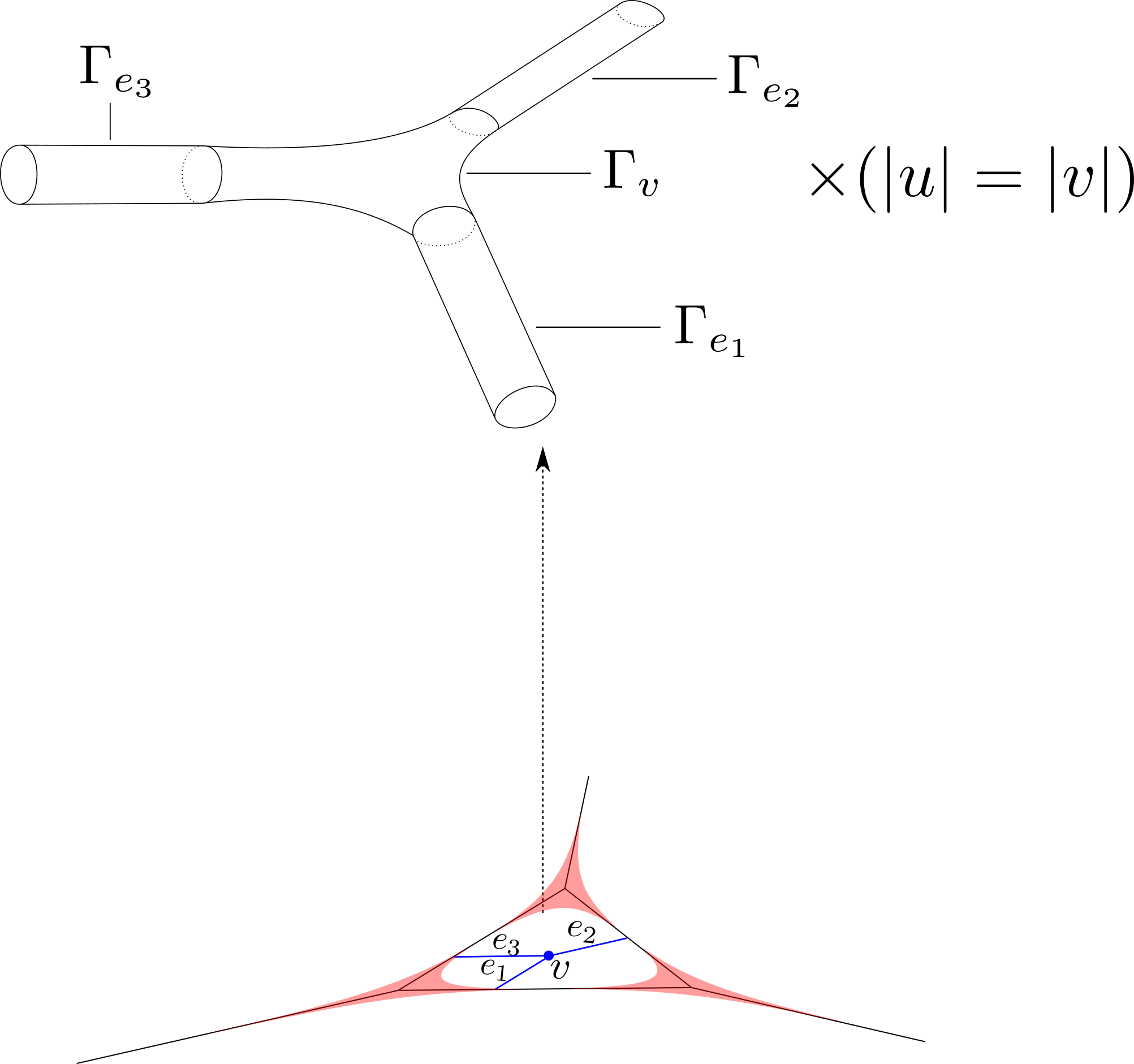}
        \caption{The lifting of $\Gamma_e$ and $\Gamma_v$}
        \label{fig_liftingofgamma_e_gamma_v}
    \end{figure}
    Figure \ref{fig_liftingofgamma_e_gamma_v} shows the construction of $\Gamma_e$ and $\Gamma_v$.
    \item For $\beta_a$, suppose it is parametrized by $r_a(\tau)\in\R^n,$ $\tau\in[0,1]$ with $r_a(1)=a$, the end of $\beta_{\trop,C}$ on $F_{j,\trop}$ and $r_a(0)\in U_j\backslash U_j'$. Its primitive tangent vector is $\xi_a=v_j$ and it is dual to a cell $\sigma_a$ of $T_f$. Choose a path $s(r_a(\tau))\in\Log_t^{-1}(r_a(\tau))$ such that $1+t^{\lambda_j}s(r_a(1))^{v_j}=0$, $s(r_a(0))=s_{\real>0}(r_a(0))$ and $\exp(\sqrt{-1}\arg((s(r_a(\tau)))^{v_j}))$ is a path in $S^1\backslash\{-\sqrt{-1}\}$ with counterclockwise direction. Then the lifting $\Gamma_a$ is
    \begin{align*}
        \Gamma_a:=\Phi_j^{*}(\bigcup\limits_{\tau\in[0,1]}\{&(z,u,v)\in Y^t_{j,\text{simple}}\ |\ z\in \sigma_a(s(r_a(\tau))), |u|=|v|\}).
    \end{align*}
    Note that $|u|<\sqrt{2R''},|v|<\sqrt{2R''}$ since $\beta_a\subset U_j$, so $\tau(u,v)\equiv 1$ and thus $|u|=|v|=|W_{\Sigma,j}(z)|^{1/2}$. So for $z\in \sigma_a(s(r_a(1)))$, we have $|u|=|v|=|1+t^{\lambda_j}z^{v_j}|^{1/2}=0$ since $1+t^{\lambda_j}s(r_a(1))^{v_j}=0$ and $\sigma_a$ is dual to $v_j$. So 
    $$
        \partial\Gamma_a=\Phi_j^*(\{(z,u,v)\in Y^t_{j,\text{simple}}\ |\ z\in \sigma_a(s(r_a(0))), |u|=|v|\}),
    $$
    which is the same as one component of $\partial\Gamma_e$ since $r_a(0)\in U_j\backslash U_j'$ and $\Phi_{j}$ is an identity on $Y_j^t\backslash\mu_0^{-1}([0,R']^2\times(\log t\cdot U'_j))$.
    The orientation of $\sigma_a(s(r_a(\tau)))$ is given in the same way as of $\sigma_e(s_{\real>0}(r_e(\tau)))$, so the orientations of $\partial\Gamma_a$ induced from $\Gamma_a$ and $\Gamma_e$ are opposite, thus $\Gamma_a$ caps $\Gamma_e$.
    \begin{figure}[htbp!]
        \centering
        \includegraphics[scale=0.5]{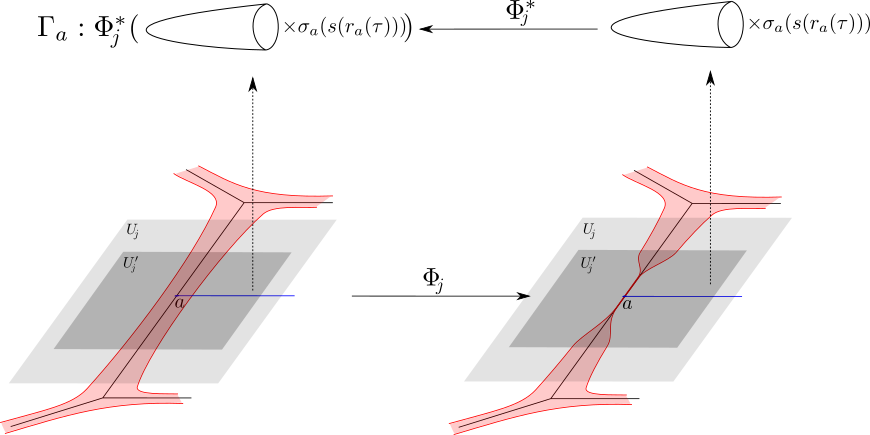}
        \caption{The lifting of $\Gamma_a$}
        \label{fig_liftingofgamma_a}
    \end{figure}
    Figure \ref{fig_liftingofgamma_a} shows the construction of $\Gamma_a$.
\end{itemize}

Now we show that these lifting pieces are Lagrangian.

\begin{proposition}
    \label{prop_Lagr_checking}
    The lifting pieces $\Gamma_e,\Gamma_v$ and $\Gamma_a$ are Lagrangian with respect to $\omega^t:=\omega_0|_{Y^t}$, i.e.,
    $$
        \omega_0|_{\Gamma_e}=0,\omega_0|_{\Gamma_v}=0,\omega_0|_{\Gamma_a}=0.
    $$
\end{proposition}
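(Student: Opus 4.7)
The plan is to pull back $\omega^t=\omega_0|_{Y^t}$ to each of the three pieces and check it vanishes, splitting the computation as $\omega_0 = \omega_z + \omega_{uv}$ with $\omega_z = \sum_{i=1}^n d\log z_i\wedge d\log\bar z_i$ and $\omega_{uv} = du\wedge d\bar u + dv\wedge d\bar v$. Throughout I will exploit the tropical duality $\mathrm{span}(\sigma)\perp\xi$ between a mixed cell $\sigma$ and the direction $\xi$ of the dual cell of $\beta_{\trop,C}$, together with the defining constraint $uv=W_\Sigma$ and the condition $|u|=|v|$.

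For the $z$-part, write $\log z_i = \xi_i + \sqrt{-1}\theta_i$, so that $\omega_z = -2\sqrt{-1}\sum d\xi_i\wedge d\theta_i$. On $\Gamma_v$ the base point is fixed and $d\xi_i$ vanishes on all tangent vectors, hence $\omega_z|_{\Gamma_v}=0$ automatically. On $\Gamma_e$, I would parametrize by $(\tau,\alpha_1,\ldots,\alpha_{n-1},\phi_u)$ with $\log z = r_e(\tau)\log t + 2\pi\sqrt{-1}\sum_k\alpha_k e_k$ for a $\Z$-basis $\{e_k\}$ of $\mathrm{span}(\sigma_e)\cap\Z^n$; the only nontrivial pairing is
\[
  \sum_i d\xi_i\wedge d\theta_i(\partial_\tau,\partial_{\alpha_k}) = 2\pi\log t\,\langle r_e'(\tau),e_k\rangle,
\]
which vanishes because $r_e'(\tau)\parallel\xi_e$ and, by the Minkowski decomposition $\sigma_e=\sigma_1+\cdots+\sigma_{n-1}$ and the perpendicularity of each $\sigma_k$ to the top-cell of $V(f_k)$ containing $e$, each $e_k\perp\xi_e$. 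The same argument after pulling through the symplectomorphism $\Phi_j$ of Lemma \ref{lemma_defomation_near_amoeba}, using $\xi_a=v_j$ and $\mathrm{span}(\sigma_a)\perp v_j$, gives $\omega_z|_{\Gamma_a}=0$.

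For the $(u,v)$-part, parametrize the locus $\{|u|=|v|\}\cap Y^t$ by $u=|W_\Sigma(z)|^{1/2}e^{\sqrt{-1}\phi_u}$ and $v=|W_\Sigma(z)|^{1/2}e^{\sqrt{-1}(\arg W_\Sigma(z)-\phi_u)}$. A direct computation gives
\[
  \omega_{uv}\big|_{\{|u|=|v|\}\cap Y^t} \;=\; -\sqrt{-1}\,d|W_\Sigma|\wedge d\arg W_\Sigma,
\]
so every pair of tangent vectors involving $\partial_{\phi_u}$ is annihilated, and the remaining task is to verify that $d|W_\Sigma|\wedge d\arg W_\Sigma$ pulls back to zero on the base cycle of each piece. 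For $\Gamma_a$ this is clean: after applying $\Phi_j$ we have $W_{\Sigma,j}=1+t^{\lambda_j}z^{v_j}$ near the end, and the orthogonality $\langle v_j,e_k\rangle=0$ for basis vectors $e_k$ of $\mathrm{span}(\sigma_a)$ forces $z^{v_j}$, and thus $W_{\Sigma,j}$, to depend only on the path parameter $\tau$, so both $|W_{\Sigma,j}|$ and $\arg W_{\Sigma,j}$ are functions of $\tau$ alone and their wedge is identically zero.

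The hard part will be verifying the analogous vanishing on $\Gamma_e$ and $\Gamma_v$, where $W_\Sigma = 1 + \sum_j t^{\lambda_j}z^{v_j}$ contributes many monomials whose exponents $v_j$ are generally not all perpendicular to the cycle directions. Here I would exploit the Minkowski-sum structure $\sigma_e = \sigma_1+\cdots+\sigma_{n-1}$ (respectively $\sigma_v$, which has exactly one $2$-dimensional factor), apply the perpendicularity $\mathrm{span}(\sigma_k)\perp$ (tangent direction of the top-cell of $V(f_k)$ containing the relevant cell of $\beta_{\trop,C}$) factor by factor, and organize $d|W_\Sigma|\wedge d\arg W_\Sigma$ on a pair of cycle-tangent vectors into a double sum indexed by pairs of monomials $(t^{\lambda_j}z^{v_j}, t^{\lambda_{j'}}z^{v_{j'}})$. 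Using the antisymmetry of the wedge, the $j\leftrightarrow j'$ exchange symmetry coming from the real-positive base point $s_{\real>0}$, and the orthogonality relations just described, the cross-terms should cancel pairwise and the diagonal terms vanish individually, yielding the required Lagrangian property.
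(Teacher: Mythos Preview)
Your treatment of $\omega_z=\sum_i d\log z_i\wedge d\log\bar z_i$ is essentially identical to the paper's: both parametrize $\Gamma_e$ by $(\tau,\eta_1,\dots,\eta_{n-1},\theta_0)$ and use the duality $\langle r_e'(\tau),\sigma_k\rangle=0$ to kill the only nontrivial pairings, while $\Gamma_v$ and $\Gamma_a$ follow from the torus fiber being isotropic and from the symplectomorphism $\Phi_j$, respectively.

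Where you diverge from the paper is on $\omega_{uv}=du\wedge d\bar u+dv\wedge d\bar v$. The paper does not reduce to $d|W_\Sigma|\wedge d\arg W_\Sigma$; it writes $u=|W_\Sigma|^{1/2}e^{\sqrt{-1}\theta_0}$, $v=W_\Sigma|W_\Sigma|^{-1/2}e^{-\sqrt{-1}\theta_0}$ and expands directly, observing that the terms carrying $d\theta_0$ are $\sqrt{-1}\,d\theta_0\wedge d\bigl(|W_\Sigma|^{1/2}\cdot|W_\Sigma|^{1/2}\bigr)$ and $-\sqrt{-1}\,d\theta_0\wedge d\bigl(W_\Sigma|W_\Sigma|^{-1/2}\cdot\overline{W_\Sigma}|W_\Sigma|^{-1/2}\bigr)$, both equal to $\pm\sqrt{-1}\,d\theta_0\wedge d|W_\Sigma|$ and hence cancelling; this is declared to be the whole answer. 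The paper then disposes of $\Gamma_a$ by pulling back through $\Phi_j$ and of $\Gamma_v$ by the one-line remark that $\omega_0$ already vanishes on the ambient locus $\Log_t^{-1}(v)\times\{|u|=|v|\}$.

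Your reduction is in fact more honest than the paper's: the identity
\[
(du\wedge d\bar u+dv\wedge d\bar v)\big|_{\{|u|=|v|\}\cap Y^t}=-\sqrt{-1}\,d|W_\Sigma|\wedge d\arg W_\Sigma
\]
is correct, and this residual is precisely the cross term $d\bigl(W_\Sigma|W_\Sigma|^{-1/2}\bigr)\wedge d\bigl(\overline{W_\Sigma}|W_\Sigma|^{-1/2}\bigr)$ that the paper's expansion silently drops (it is \emph{not} zero when $W_\Sigma$ is genuinely complex-valued along the cycle). Your argument for $\Gamma_a$---that in the simple model $W_{\Sigma,j}=1+t^{\lambda_j}z^{v_j}$ depends only on $\tau$ along $\sigma_a$---is clean and correct. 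However, the ``pairwise cancellation'' you propose for $\Gamma_e$ and $\Gamma_v$ is a genuine gap. The orthogonality relations $\langle r_e'(\tau),\sigma_k\rangle=0$ constrain only the radial direction; they say nothing about the individual phases $\arg z^{v_j}$ along the angular cycle $\sigma_e(s_{\real>0}(r_e(\tau)))$. Writing out $d|W_\Sigma|\wedge d\arg W_\Sigma$ on a pair of cycle directions produces a sum of the form $\sum_{j,j'}\mathrm{Im}\bigl(t^{\lambda_j}z^{v_j}\,\overline{t^{\lambda_{j'}}z^{v_{j'}}}\bigr)\langle v_j,d\theta\rangle\wedge\langle v_{j'},d\theta\rangle$, in which the coefficient $\mathrm{Im}(\cdot)$ is already antisymmetric in $(j,j')$, so the $j\leftrightarrow j'$ exchange you invoke does not give an extra cancellation---the double sum is simply a nontrivial function of the angular variables. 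In short, you have correctly located a term that neither your sketch nor the paper's displayed computation actually controls.
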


\begin{proof}
    For $\Gamma_e$, the $(n-1)-$cell $\sigma_e$ of $T_f$ dual to $e$ is of the form 
    $$
        \sigma_e=\sigma_1+\sigma_2+\cdots+\sigma_{n-1},
    $$
    where $\sigma_k$ is a $1-$cell of the subdivision $T_k$ of $G_k$. Then $\Gamma_e$ is parametrized by 
    \begin{align*}
        [0,1]\times[0,2\pi]^{n-1}\times[0,2\pi]\rightarrow&\Gamma_e\\
        (\tau,\eta_1,\dots,\eta_{n-1},\theta_0)\mapsto&(\exp(\sqrt{-1}(\eta_1\sigma_1+\eta_2\sigma_2+\cdots+\eta_{n-1}\sigma_{n-1}))\cdot s_{\real>0}(r_e(\tau)),\\
        &|W_{\Sigma}(\tau,\eta)|^\frac{1}{2}e^{\sqrt{-1}\theta_0},W_{\Sigma}(\tau,\eta)|W_{\Sigma}(\tau,\eta)|^{-\frac{1}{2}}e^{-\sqrt{-1}\theta_0}),
    \end{align*}
    where $\sigma_{k,i}$ is the $i$-th coordinate of $\sigma_k$ when we regard it as a vector, and $W_\Sigma(\tau,\eta)=W_\Sigma(\exp(\sqrt{-1}(\eta_1\sigma_1+\eta_2\sigma_2+\cdots+\eta_{n-1}\sigma_{n-1}))\cdot s_{\real>0}(r_e(\tau)))$. Thus
    \begin{align*}
        &\sum\limits_{i=1}^n\frac{dz_i\wedge d\bar{z}_i}{|z_i|^2}|_{\Gamma_e}\\
        =&\sum\limits_{i=1}^n\big(\frac{s_{\real>0,i}'(r_e(\tau))}{s_{\real>0}(r_e(\tau))}d\tau+\sum\limits_{k=1}^{n-1}\sqrt{-1}\sigma_{k,i}d\eta_k\big)\wedge\big(\frac{\overline{s_{\real>0,i}}'(r_e(\tau))}{\overline{s_{\real>0}}(r_e(\tau))}d\tau-\sum\limits_{k=1}^{n-1}\sqrt{-1}\sigma_{k,i}d\eta_k\big)\\
        =&-2\sqrt{-1}\sum\limits_{i=1}^{n}\sum\limits_{k=1}^{n-1}r_{e,i}'(\tau)\sigma_{k,i}d\tau\wedge d\eta_k\\
        =&-2\sqrt{-1}\sum\limits_{k=1}^{n-1}\partial\langle r_e'(\tau),\sigma_k\rangle d\tau\wedge d\eta_k\\
        =&0
    \end{align*}
    where the last equation is because that $e$ is dual to $\sigma_e$. We also have that
    \begin{align*}
        &(du\wedge d\bar{u}+dv\wedge d\bar{v})|_{\Gamma_e}\\
        =&d(|W_\Sigma(\tau,\eta)|^{\frac{1}{2}}e^{\sqrt{-1}\theta_0})\wedge d(|W_\Sigma(\tau,\eta)|^{\frac{1}{2}}e^{-\sqrt{-1}\theta_0})\\
        &+d(W_\Sigma(\tau,\eta)|W_\Sigma(\tau,\eta)|^{-\frac{1}{2}}e^{-\sqrt{-1}\theta_0})\wedge d(\overline{W_\Sigma}(\tau,\eta)|W_\Sigma(\tau,\eta)|^{-\frac{1}{2}}e^{\sqrt{-1}\theta_0})\\
        =&\sqrt{-1}d\theta_0\wedge d(|W_\Sigma(\tau,\eta)|^{\frac{1}{2}}|W_\Sigma(\tau,\eta)|^{\frac{1}{2}})-\sqrt{-1}d\theta_0\wedge d(\overline{W_\Sigma}(\tau,\eta)|W_\Sigma(\tau,\eta)|^{-\frac{1}{2}}W_\Sigma(\tau,\eta)|W_\Sigma(\tau,\eta)|^{-\frac{1}{2}})\\
        =&0.
    \end{align*}
    So $\omega_0|_{\Gamma_e}=0$.
    For $\Gamma_a$, since $\Phi_j$ is a symplectomorphism, $\omega_0|_{\Gamma_a}=0$ for the same reason as for $\Gamma_e$.
    For $\Gamma_v$, $\omega_0|_{\Gamma_v}=0$ since $\omega_0|_{\Log_t^{-1}(v)\times(|u|=|v|)}=0$.
\end{proof}

Now we can give the Lagrangian lifting of $\beta_{\trop,C}$.

\begin{definition}
\label{def_lagr_lifting}
The \textit{Lagrangian lifting} of the tropical curve $\beta_{\trop,C}$ is the piecewise Lagrangian closed submanifold
\begin{align}
    L_{\trop,C}^t=\bigcup\limits_{e\in\{\text{edges of }\beta_{\trop,C}\}}\Gamma_{e}\cup\bigcup\limits_{v\in\{\text{vertices of }\beta_{\trop,C}\}}\Gamma_{v}\cup\bigcup\limits_{\substack{j=1,\\ a\in\{\text{ends of }\beta_{\trop,C}\text{ at }F_{j,\trop}\}}}^p\Gamma_{a}
\end{align}
in $Y^t$.
\end{definition}

\subsubsection{Evaluation of the central charge}
\label{4.3.2}
In this subsubsection we evaluate the central charge $C_t(L_{\trop,C}^t)$ as defined in Definition \ref{def_centralcharge_period}. We first need to check that the form
$$
    \Omega=d\log u\wedge d\log z_1\wedge\cdots\wedge d\log z_n
$$
is well-defined on $Y^t$.

\begin{lemma}
\label{lemma_holoform_well_def}
    The form $\Omega$ is a well-defined closed form on $Y^t$ when $|t|$ is sufficiently small.
\end{lemma}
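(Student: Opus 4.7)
The plan is to check that the naive formula $\Omega = d\log u \wedge d\log z_1 \wedge \cdots \wedge d\log z_n$, which a priori has a simple pole along $\{u=0\}$ in the ambient $\mathbb{C}\times(\mathbb{C}^*)^n$, in fact extends to a regular holomorphic $(n+1)$-form on all of $Y^t$. Closedness will then be automatic, since any holomorphic top-form on the $(n+1)$-dimensional complex manifold $Y^t$ is closed for dimension reasons.

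On $U_v := Y^t \cap \{v \neq 0\}$, I would use $u = W_\Sigma(z)/v$ to write $d\log u = d\log W_\Sigma - d\log v$, and then exploit the identity
\[
dW_\Sigma \wedge \frac{dz_1}{z_1} \wedge \cdots \wedge \frac{dz_n}{z_n} = 0
\]
(since $dW_\Sigma = \sum_i \partial_i W_\Sigma\, dz_i$ and each summand repeats a $dz_i$) to conclude $\Omega = -d\log v \wedge d\log z_1 \wedge \cdots \wedge d\log z_n$ on $U_v$. This expression is manifestly regular on $U_v$, including along $\{u=0\}\cap U_v$, and coincides with the original formula on the overlap with $U_u := Y^t \cap \{u \neq 0\}$.

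The only remaining locus is $Z := Y^t \setminus (U_u \cup U_v) = \{u=v=0,\,W_\Sigma(z)=0\}$, which has complex codimension $2$ in $Y^t$. For $|t|$ small, $Y^t$ is smooth along $Z$: the singularities of $\{uv - W_\Sigma = 0\}$ occur precisely at critical points of $W_\Sigma$ with critical value $0$, and since the $t = 0$ limit $1 + \sum_i z_i$ has no critical points in $(\mathbb{C}^*)^n$ at all, a continuity/compactness argument (combined with Corollary \ref{cor_Fanocor2}, which gives that $W_\Sigma$ at a critical point equals $1 + \sum_s(1-\sum_i v_{n+s,i})\,t^{\lambda_{n+s}} z^{v_{n+s}}$ with all $(1-\sum_i v_{n+s,i}) > 0$) keeps critical values of $W_\Sigma$ bounded away from $0$ for small $|t|$. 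Hartogs' theorem then extends the regular form $\Omega$ across the codimension-$2$ smooth subvariety $Z$. For an explicit check, near $p = (0,0,z_0) \in Z$ with $\partial_n W_\Sigma(z_0) \neq 0$, I would take local coordinates $(u,v,z_1,\ldots,z_{n-1})$, substitute $\partial_n W_\Sigma\, dz_n = v\,du + u\,dv - \sum_{i<n} \partial_i W_\Sigma\,dz_i$ into $\Omega$, and use $du \wedge du = 0$ to collapse the expression to
\[
\Omega = \pm\frac{du \wedge dv}{z_n\,\partial_n W_\Sigma} \wedge \frac{dz_1}{z_1} \wedge \cdots \wedge \frac{dz_{n-1}}{z_{n-1}},
\]
which is plainly holomorphic near $p$. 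The hard part will be the smoothness claim — ruling out $0$ as a critical value of $W_\Sigma$ for small $|t|$; everything else is routine manipulation of logarithmic forms.
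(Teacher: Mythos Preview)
Your proposal is correct and follows essentially the same approach as the paper: the paper, like you, reduces to showing regularity at $u=v=0$, uses the relation $u\,dv+v\,du=\sum_i\partial_i W_\Sigma\,dz_i$ to substitute for one $d\log z_i$, and obtains exactly your local expression $\Omega=(z_i\,\partial_i W_\Sigma)^{-1}\,du\wedge dv\wedge\prod_{k\neq i}d\log z_k$, with smoothness of $W_\Sigma^{-1}(0)$ for small $|t|$ simply asserted. Your extra ingredients---the identity $\Omega=-d\log v\wedge\prod_i d\log z_i$ on $\{v\neq 0\}$ (which the paper in fact uses later when integrating over $\Gamma^-$), the Hartogs step, and the sketch toward the smoothness claim via Corollary~\ref{cor_Fanocor2}---are all fine but not needed once you have the explicit local formula; the Hartogs argument in particular is redundant given that you immediately supply the direct computation.
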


\begin{proof}
It is sufficient to show that it is well-defined on $u=v=0$. Since $W_{\Sigma}^{-1}(0)$ is smooth when $|t|$ is sufficiently small, $(\frac{\partial W_{\Sigma}(z)}{\partial z_1},\frac{\partial W_{\Sigma}(z)}{\partial z_2},\dots,\frac{\partial W_{\Sigma}(z)}{\partial z_n})$ is nonzero at $W_{\Sigma}(z)=0$ when $|t|$ is sufficiently small. Say $\frac{\partial W_{\Sigma}(z)}{\partial z_1}\neq0$ at $u=v=0$, then since
$$
    udv+vdu=\sum_{i=1}^n\frac{\partial W_{\Sigma}(z)}{\partial z_i}dz_i,
$$
on $Y^t$, we can replace $d\log z_1$ in $\Omega$ by $u(\frac{\partial W_{\Sigma}(z)}{\partial z_1})^{-1}z_1^{-1}dv$ and thus 
$$
    \Omega|_{Y^t}=(\frac{\partial W_{\Sigma}(z)}{\partial z_1})^{-1}z_1^{-1}dudvd\log z_2\cdots\log z_n,
$$
which is well-defined on $u=v=0$. It is also closed on $Y^t$ since the coefficients are holomorphic.
\end{proof}

\begin{lemma}
\label{lemma_match_of_forms}
The $(n+1)$-form $\Omega|_{Y^t}$ is the same as the holomorphic form $\Omega_{Y^t}$ as in Definition \ref{def_centralcharge_period}.
\end{lemma}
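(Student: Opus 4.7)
The plan is to unfold the definition of the Poincaré residue and then perform a direct computation, matching it with the coordinate expression for $\Omega$ given in the previous lemma. Recall that if $f$ is a holomorphic function with $f=0$ defining a smooth hypersurface, and $\omega$ is a holomorphic top-form which can be written as $\omega = df \wedge \eta + f\cdot\xi$ near $f=0$, then $\operatorname{Res}_{f=0}(\omega/f) = \eta|_{f=0}$. In our setting, $f(u,v,z) = uv - W_\Sigma(z)$ and $\omega = du\wedge dv \wedge \tfrac{dz_1}{z_1}\wedge\cdots\wedge\tfrac{dz_n}{z_n}$.

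First, I would work on the open subset of $Y^t$ where $u\neq 0$. On this open set, from $df = v\,du + u\,dv - \sum_i \tfrac{\partial W_\Sigma}{\partial z_i}dz_i$ I can solve
\begin{equation*}
dv = \tfrac{1}{u}\bigl(df - v\,du + \textstyle\sum_i \tfrac{\partial W_\Sigma}{\partial z_i}dz_i\bigr).
\end{equation*}
Substituting into $\omega$ and using that $dz_i \wedge \tfrac{dz_1}{z_1}\wedge\cdots\wedge\tfrac{dz_n}{z_n} = 0$, the only surviving term is
\begin{equation*}
\omega = -\tfrac{1}{u}\,df \wedge du \wedge \tfrac{dz_1}{z_1}\wedge\cdots\wedge\tfrac{dz_n}{z_n} + f\cdot(\text{something}),
\end{equation*}
so the residue is $(-1)\cdot d\log u \wedge d\log z_1\wedge\cdots\wedge d\log z_n = -\Omega$. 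Absorbing the sign into the residue convention (or equivalently choosing the opposite orientation for the $(uv = W_\Sigma)$ side of the tube), this identifies $\Omega_{Y^t}$ with $\Omega$ on $\{u\neq 0\}\cap Y^t$.

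Symmetrically, solving for $du$ instead shows $\Omega_{Y^t} = d\log v \wedge d\log z_1\wedge\cdots\wedge d\log z_n$ on $\{v\neq 0\}\cap Y^t$; these two expressions agree there because on $Y^t$ we have $d\log u + d\log v = d\log W_\Sigma$, and $d\log W_\Sigma$ is a $\C$-linear combination of $dz_1,\dots,dz_n$, so its wedge with $d\log z_1\wedge\cdots\wedge d\log z_n$ vanishes. Together, the two local identifications agree on the overlap $\{u\neq 0, v\neq 0\}$, which is dense in $Y^t$.

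Finally, by Lemma \ref{lemma_holoform_well_def}, the form $\Omega$ extends to a globally well-defined holomorphic form on $Y^t$ (including along $u=v=0$); the residue form $\Omega_{Y^t}$ is likewise holomorphic on $Y^t$ by construction. Since two holomorphic forms that coincide on a dense open subset must coincide everywhere, we conclude $\Omega|_{Y^t} = \Omega_{Y^t}$. The only subtle point is keeping the sign conventions consistent between the residue definition and the wedge-ordering of $\Omega$; this is the main bookkeeping issue, but not a genuine obstacle.
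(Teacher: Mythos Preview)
Your approach is essentially the same as the paper's: substitute $dv$ (equivalently, write $du\,dv = du\,\tfrac{dq}{u}$ modulo terms that vanish after wedging with $\tfrac{dz_1}{z_1}\cdots\tfrac{dz_n}{z_n}$, where $q=uv-W_\Sigma$) and then read off the residue. The paper's proof is a two-line version of exactly this computation and does not bother with the second chart or the density argument you add.

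One small bookkeeping slip: after you absorb the sign on $\{u\neq 0\}$ to get $\Omega_{Y^t}=+d\log u\wedge d\log z_1\wedge\cdots\wedge d\log z_n$, the symmetric computation on $\{v\neq 0\}$ would, under the \emph{same} sign convention, give $-d\log v\wedge d\log z_1\wedge\cdots\wedge d\log z_n$, not $+d\log v\wedge\cdots$. Since $d\log u\wedge\cdots = -d\log v\wedge\cdots$ on the overlap (by the relation you cite), the two local answers do agree, but as written your two displayed formulas on the two charts are off by a sign from each other. You already flag the sign bookkeeping as the only subtle point, and indeed it is purely a matter of convention; the paper sidesteps this by simply stripping $dq/q$ in place rather than commuting it to the front.
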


\begin{proof}
    Set $q=uv-W_{\Sigma}(z)$, then we have that
    \begin{align*}
        \Omega_{Y^t}=&\Residue_{(q=0)}(\frac{1}{q}du\frac{dq}{u}\frac{dz_1}{z_1}\cdots\frac{dz_n}{z_n})\\
        =&d\log ud\log z_1\cdots\log z_n\\
        =&\Omega|_{Y^t}.
    \end{align*}
\end{proof}

Now we want to evaluate the period integrals, i.e., the integrals of $\Omega$ on $\Gamma_e$, $\Gamma_v$ and $\Gamma_a$. To do this, we need the following lemma.

\begin{lemma}
\label{lemma_periodevaluation1}
Suppose $\xi$ is a segment in $\R^n$ with integral endpoints and $[\xi]\in H_1(T^n)$ is induced through $\xi\hookrightarrow\R^n\xrightarrow{\exp(2\pi\sqrt{-1}\cdot)}T^n$. Let $[\sigma_\xi]\in H_{n-1}(T^n)$ be the Poincar\'{e} dual of $[\xi]$, then
$$
    \int_{[\sigma_\xi]}d\theta_1d\theta_2\cdots \widehat{d\theta_j}\cdots d\theta_n=(-1)^{i-1}(2\pi)^{n-1}\xi_i,
$$
where $\xi_i$ is the $i$-th coordinate of $\xi$ when we regard it as a vector, and we parametrize $T^n$ as $T^n=\{(e^{\sqrt{-1}\theta_1},\cdots,e^{\sqrt{-1}\theta_n})\ |\ (\theta_1,\cdots,\theta_n)\in[0,2\pi]^n\}$.
\end{lemma}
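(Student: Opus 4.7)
The plan is to reduce to generators of $H_1(T^n)$ and compute directly, using the fact that both sides of the claimed identity are $\mathbb{Z}$-linear in $\xi$. The right side is manifestly linear in the coordinates $\xi_i$, and the left side is linear because Poincar\'{e} duality $H_1(T^n)\to H_{n-1}(T^n)$ is linear and integration is linear in the cycle. So I would first reduce to the case $\xi=e_k$, the $k$-th standard basis vector, by writing $[\xi]=\sum_k\xi_k[\gamma_k]$ where $\gamma_k$ is the $k$-th coordinate loop.

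For $\xi=e_k$ the associated loop in $T^n$ is the coordinate circle $\gamma_k$, and the Poincar\'{e} dual $(n-1)$-cycle is, up to an orientation sign, the subtorus $T_k^{n-1}:=\{\theta_k=0\}\subset T^n$ oriented by the ordered basis $(\frac{\partial}{\partial\theta_1},\dots,\widehat{\frac{\partial}{\partial\theta_k}},\dots,\frac{\partial}{\partial\theta_n})$. The sign is pinned down by intersecting with the basis loops $[\gamma_j]$: the only nonempty transverse intersection occurs at $j=k$ (at the origin), and rearranging $(\frac{\partial}{\partial\theta_k},\frac{\partial}{\partial\theta_1},\dots,\widehat{\frac{\partial}{\partial\theta_k}},\dots,\frac{\partial}{\partial\theta_n})$ into the standard orientation costs $k-1$ transpositions, giving $[\sigma_{e_k}]=(-1)^{k-1}[T_k^{n-1}]$ in the convention implicit in the lemma.

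Next, I would evaluate the integral directly. Restricted to $\{\theta_k=0\}$, the pullback of $d\theta_1\wedge\cdots\widehat{d\theta_i}\cdots\wedge d\theta_n$ vanishes unless $i=k$, because if $i\neq k$ the form still contains $d\theta_k$, which pulls back to zero along $T_k^{n-1}$. For $i=k$ the form restricts to the product volume form on $T_k^{n-1}$, integrating to $(2\pi)^{n-1}$. Combining with the orientation sign yields
\begin{equation*}
\int_{[\sigma_{e_k}]}d\theta_1\wedge\cdots\widehat{d\theta_i}\cdots\wedge d\theta_n=(-1)^{k-1}(2\pi)^{n-1}\delta_{ik}=(-1)^{i-1}(2\pi)^{n-1}(e_k)_i,
\end{equation*}
since in the only nonzero case $i=k$. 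Linearity in $\xi$ then assembles the result $\int_{[\sigma_\xi]}d\theta_1\wedge\cdots\widehat{d\theta_i}\cdots\wedge d\theta_n=(-1)^{i-1}(2\pi)^{n-1}\xi_i$.

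The main obstacle is a bookkeeping one, namely pinning down the Poincar\'{e} duality sign convention so that $[\sigma_{e_k}]=(-1)^{k-1}[T_k^{n-1}]$; one can alternatively verify this by writing down the de Rham representative $\eta_{\gamma_k}=\tfrac{(-1)^{k-1}}{(2\pi)^{n-1}}d\theta_1\wedge\cdots\widehat{d\theta_k}\cdots\wedge d\theta_n$ of the Poincar\'{e} dual cohomology class and checking $\int_{\gamma_j}d\theta_i=\int_{T^n}d\theta_i\wedge\eta_{\gamma_j}$. Once the orientation is fixed, the actual evaluation of the integral is a one-line restriction argument.
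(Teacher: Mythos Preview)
Your proof is correct and follows essentially the same route as the paper: reduce by linearity to the standard basis segments $e_k$, compute $\int_{[\sigma_{e_k}]}d\theta_1\cdots\widehat{d\theta_i}\cdots d\theta_n=(-1)^{i-1}(2\pi)^{n-1}\delta_{ik}$, and then assemble via $[\sigma_\xi]=\sum_k\xi_k[\sigma_{e_k}]$. The only difference is that you spell out the orientation bookkeeping (identifying $[\sigma_{e_k}]=(-1)^{k-1}[T_k^{n-1}]$ and justifying the $\delta_{ik}$ via pullback vanishing), whereas the paper simply asserts the $\delta_{ik}$ formula directly.
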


\begin{proof}
    Let $e_k=(0,\cdots,0,1,0,\cdots,0)$ be the segment in $\R^n$ with $1$ in the $k$-th position and $[\sigma_{e_k}]\in H_{n-1}(T^n)$ be the Poincar\'{e} dual of $[e_k]$, then 
    $$
        \int_{[\sigma_{e_k}]}d\theta_1d\theta_2\cdots \widehat{d\theta_i}\cdots d\theta_n=(-1)^{i-1}(2\pi)^{n-1}\delta_{ik}.
    $$
    Thus, $[\sigma_\xi]=\sum\limits_{k=1}^n\xi_k[\sigma_{e_k}]$ and we have that
    $$
        \int_{[\sigma_\xi]}d\theta_1d\theta_2\cdots \widehat{d\theta_i}\cdots d\theta_n=(-1)^{i-1}(2\pi)^{n-1}\xi_i.
    $$
\end{proof}

Now let us do the computation. We parametrize $(\C^*)^n$ by 
$$
(\C^*)^n=\{(e^{r_1+\sqrt{-1}\theta_1},\cdots,e^{r_n+\sqrt{-1}\theta_n})\ |\ (r_1,\cdots,r_n)\in\R^n,(\theta_1,\cdots,\theta_n)\in[0,2\pi]^n\}
$$ and $|u|=|v|$ by 
$$
(u,v)=(|W_{\Sigma}(z)|^{\frac{1}{2}}e^{\sqrt{-1}\theta_0},W_{\Sigma}(z)|W_{\Sigma}(z)|^{-\frac{1}{2}}e^{-\sqrt{-1}\theta_0}),\theta_0\in[0,2\pi].
$$
\begin{itemize}
    \item For $\Gamma_e$,
    \begin{equation}
    \begin{aligned}
        &\int_{\Gamma_e}\Omega\\ 
        =&\int_{\Gamma_e}d(\log(|W_{\Sigma}(z)|^{\frac{1}{2}}e^{\sqrt{-1}\theta_0}))(dr_1+\sqrt{-1}d\theta_1)\cdots(dr_n+\sqrt{-1}d\theta_n)\\ 
        =&\int_{\tau\in[0,1]}\int_{\theta_0\in[0,2\pi]}\int_{[\sigma_e(s_{\real>0}(\tau))]}(\sqrt{-1}d\theta_0)(\log tr_{e,1}'(\tau)d\tau+\sqrt{-1}d\theta_1)\cdots(\log tr_{e,n}'(\tau)d\tau+\sqrt{-1}d\theta_n)\\ 
        =&2\pi(\sqrt{-1})^n\log t\int_{\tau\in[0,1]}\int_{[\sigma_e(s_{\real>0}(\tau))]}\sum_{i=1}^n(-1)^{i-1}r_{e,i}'(\tau)d\tau d\theta_1d\theta_2\cdots \widehat{d\theta_i}\cdots d\theta_n\\ 
        =&(2\pi\sqrt{-1})^n\log t w(e) \langle r_e(1)-r_e(0),\xi_e \rangle,
    \end{aligned}
    \label{eq_integral_on_Gamma_e}
    \end{equation}
    where we use the fact that $[\sigma_e(s_{\real>0}(\tau))]$ is the Poincar\'{e} dual of $w(e)[\xi_e(s_{\real>0}(\tau))]$ and Lemma \ref{lemma_periodevaluation1} for the last equation.
    \item For $\Gamma_v$,
    \begin{equation}
    \label{eq_integral_on_Gamma_v}
    \begin{aligned}
        &\int_{\Gamma_v}\Omega\\ 
        =&\int_{\Gamma_v}d(\log(|W_{\Sigma}(z)|^{\frac{1}{2}}e^{\sqrt{-1}\theta_0}))(dr_1+\sqrt{-1}d\theta_1)\cdots(dr_n+\sqrt{-1}d\theta_n)\\ 
        =&\int_{\Gamma_v}(\sqrt{-1})^{n+1}d\theta_0d\theta_1\cdots d\theta_n\\ 
        =&(2\pi)(\sqrt{-1})^{n+1}\int_{\sigma_v(s_{\real>0}(v))}d\theta_1\cdots d\theta_n\\ 
        =&(2\pi\sqrt{-1})^{n+1}\vol(\sigma_v).
    \end{aligned}
    \end{equation}
    \item For $\Gamma_a$ with $a\in F_{j,\trop}$, it is hard to do the computation directly due to the symplectomorphism $\Phi_j$. To resolve it, we add a cycle $\Gamma_\text{add}$, whose period integral can be computed, to $\Gamma_a$, and isotope $\Gamma_{a,\text{total}}:=\Gamma_a\cup\Gamma_{\text{add}}$ to a \textit{singular fiber} within $Y_j^t$, whose period integral can also be computed. Then $\int_{\Gamma_a}\Omega$ is the difference of these two integrals, i.e.,
    \begin{equation}
    \label{eq_integral_on_Gamma_a_idea}
        \int_{\Gamma_a}\Omega       =\int_{\Gamma_{a,\text{total}}}\Omega-\int_{\Gamma_\add}\Omega.
    \end{equation}
\end{itemize}

We show the details of the construction of $\Gamma_{\text{add}}$. It consists of 7 pieces,
$$
    \Gamma_{\add}:=\Gamma^{+,\text{part}1}\cup\Gamma_{av}\cup\Gamma^{+,\text{part}2}\cup\Gamma^-\cup\Gamma_{\text{in}}\cup\Gamma_{\text{out}}\cup\Gamma_{\text{co}}.
$$
Suppose $a$ is an end at $F_{j,\trop}$ of an edge $e$, then the primitive tangent vector $\xi_a$ of $e$ is equal to $v_j$. Suppose the cell $\sigma_e$ dual to $e$ is of the form $\sigma_e=\xi_a^{\perp,1}+\xi_a^{\perp,2}+\cdots+\xi_a^{\perp,n-1}$, then let us set

\begin{itemize}
    \item $\gamma_a$: An integral vector such that $\langle \gamma_a,v_j \rangle=1$.
    \item $\tilde{\sigma}_a:=\sigma_e+\gamma_a$: An $n$-cycle in $\R^n$ which is the Minkowski sum of $\sigma_e$ and $\gamma_a$. Note that $\vol(\tilde{\sigma}_a)=w(e)$.
    \item A path $r_{\text{add}}(\tau)\subset U_j\backslash U_j'$, $\tau\in[0,1]$, and $|t|$ sufficiently small, such that $1>|t|^{\chi_j(r_{\text{add}}(0))}+(p-1)\max_{k\neq j}\{|t|^{c_k}\}$, $r_{\text{add}}(\frac{1}{2})=r_a(0)$ and $|t|^{\chi_j(r_{\text{add}}(1))}>1+(p-1)\max_{k\neq j}\{|t|^{c_k}\}$, where $c_k$'s are given in (\ref{eq_U_j}).
    \item $\tilde{\sigma}_a(\tau)$: The $(n+1)$-cycle $\tilde{\sigma}_a(s_{\real>0}(r_{\add}(\tau)))$ with orientation induced from $\Log_t^{-1}(r_{\add}(\tau))$.
    \item $r_0^+(\tau), r_0^-(\tau)\subset(0,+\infty)$, $\tau\in[0,1]$, such that $2\sqrt{r_0^+(0)r^-_0(0)}=|W_{\Sigma}(s_{\real>0}(r_{\add}(0)))|$ and $2\sqrt{r_0^+(1)r^-_0(1)}=|W_{\Sigma}(s_{\real>0}(r_{\add}(1)))|$.
\end{itemize}

\begin{figure}[htbp!]
    \centering
    \includegraphics[scale=0.5]{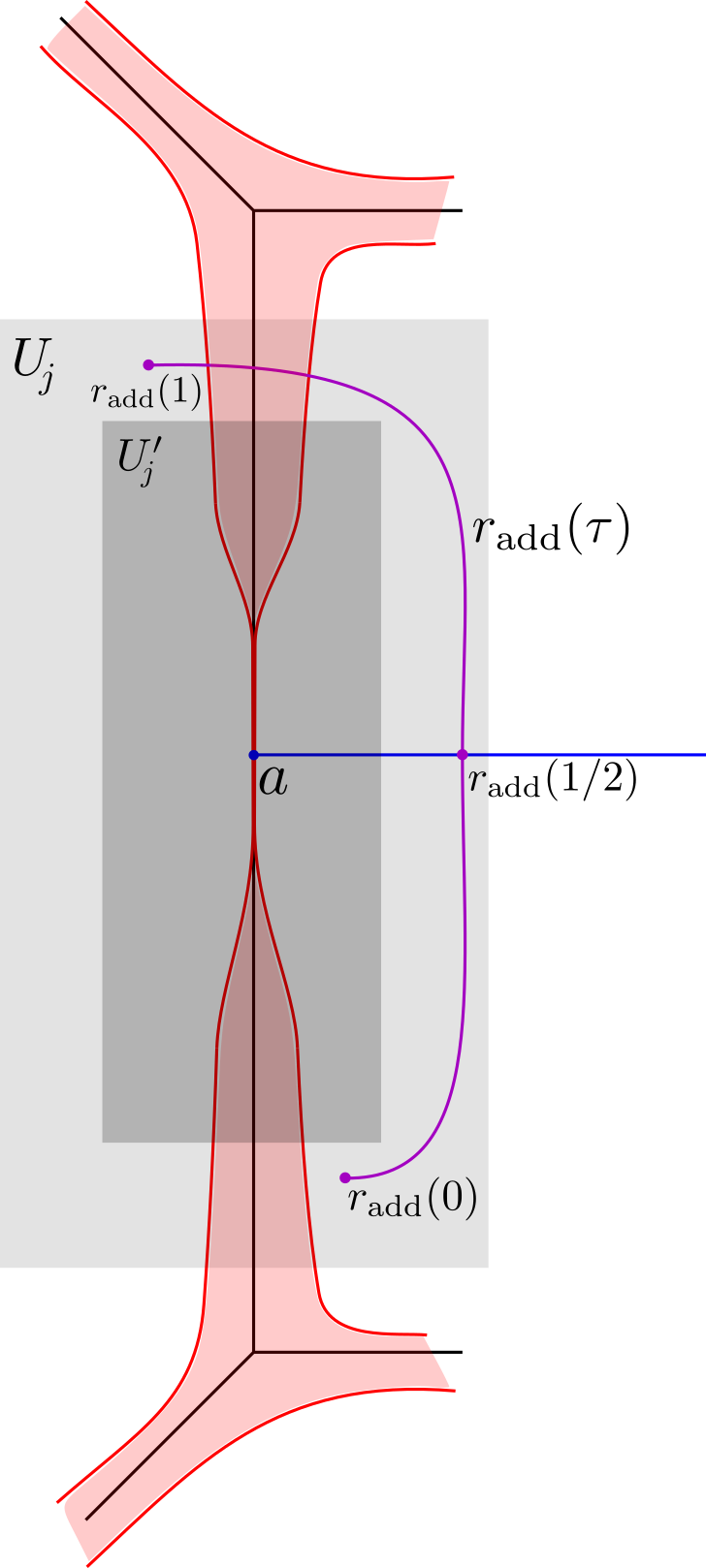}
    \caption{The path $r_{\add}(\tau)$ near the end}
    \label{fig_addin_to_the_amoeba}
\end{figure}

All the pieces of $\Gamma_{\add}$ will be constructed within $Y^t$ and then glued to $\Gamma_a$, they are given as follows:
\begin{itemize}
    \item For $\Gamma^{+,\text{part}1}$, it is given as
    \begin{align*}
        \Gamma^{+,\text{part}1}:=\{(u,v,z)\in Y^t\ |\ &z\in\tilde{\sigma}_a(\tau), u=\sqrt{2r_0^+(\tau)},\tau\in[0,1/2]\},
    \end{align*}
    with orientation $\frac{\partial}{\partial\tau}\wedge or(\tilde{\sigma}_a(\tau))$.
    
    \item For $\Gamma_{av}$, it is given as 
    \begin{align*}
        \Gamma_{av}:=\{(u,v,z)\in Y^t\ |\ &z\in\tilde{\sigma}_a(1/2), 
        u=\exp\big(\sqrt{-1}\big(s\arg(z^{v_j})\big)\big)\sqrt{2r_0^+(1/2)}, s\in[0,1]\},
    \end{align*}
    with orientation $\frac{\partial}{\partial s}\wedge or(\tilde{\sigma}_a(\tau))$.

    \item For $\Gamma^{+,\text{part}2}$, it is given as
    \begin{align*}
        \Gamma^{+,\text{part}2}:=\{(u,v,z)\in Y^t\ |\ z\in\tilde{\sigma}_a(\tau),u=\exp(\sqrt{-1}\arg(z^{v_j}))\sqrt{2r_0^+(\tau)}, \tau\in[1/2,1]\},
    \end{align*}
    with orientation $\frac{\partial}{\partial\tau}\wedge or(\tilde{\sigma}_a(\tau))$.
    
    \begin{figure}[htbp!]
        \centering
        \includegraphics[scale=0.5]{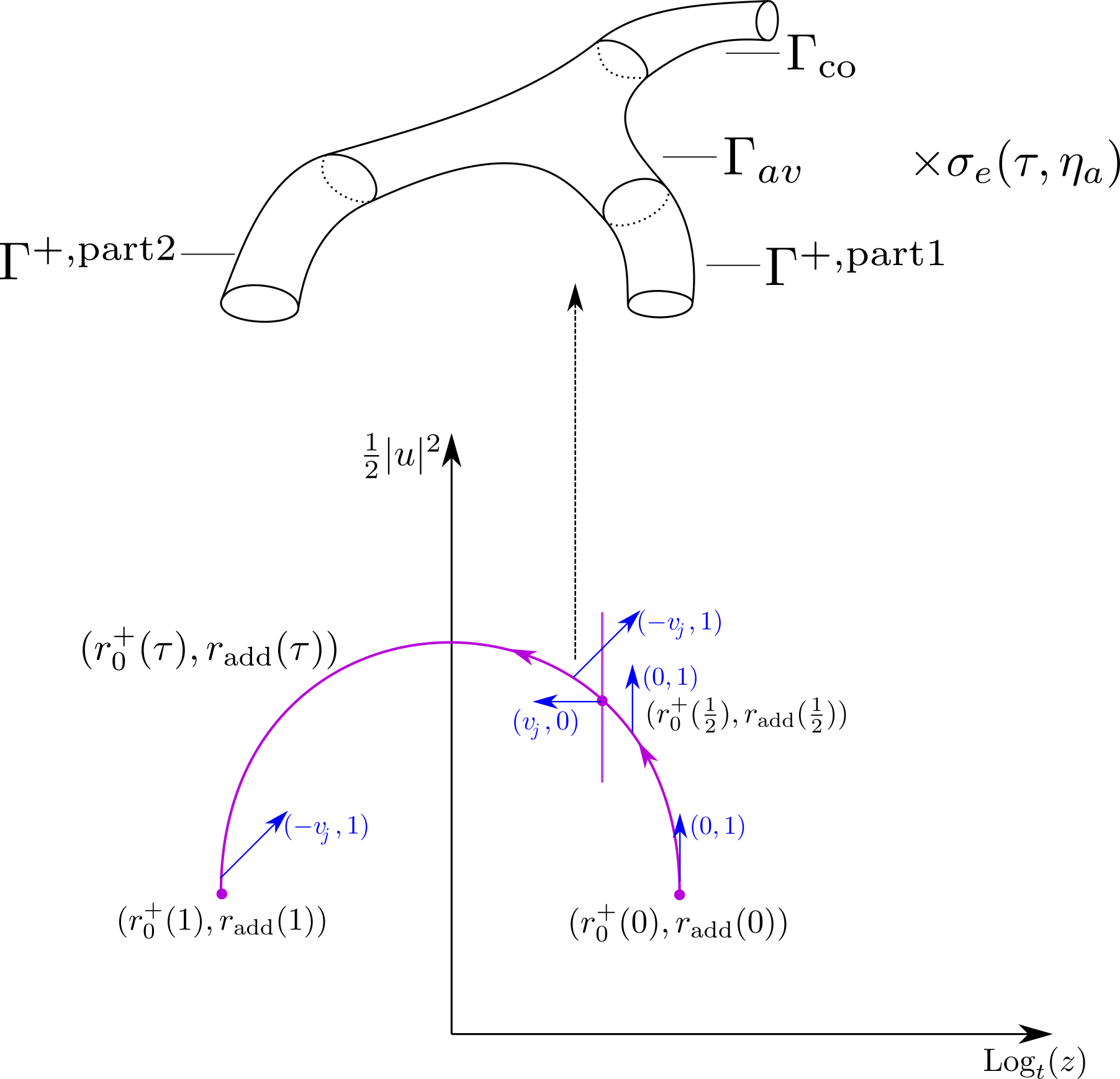}
        \caption{The lifting of $\Gamma^+$}
        \label{fig_lifting_of_Gamma+}
    \end{figure}
    
    \item For $\Gamma^{-}$, it is given as
    \begin{align*}
        \Gamma^{-}:=\{(u,v,z)\in Y^t\ |\ z\in\tilde{\sigma}_a(\tau), v=\sqrt{2r_0^-(\tau)}, \tau\in[0,1]\},
    \end{align*}
    with orientation $-\frac{\partial}{\partial\tau}\wedge or(\tilde{\sigma}_a(\tau))$.

    \begin{figure}[htbp!]
        \centering
        \includegraphics[scale=0.5]{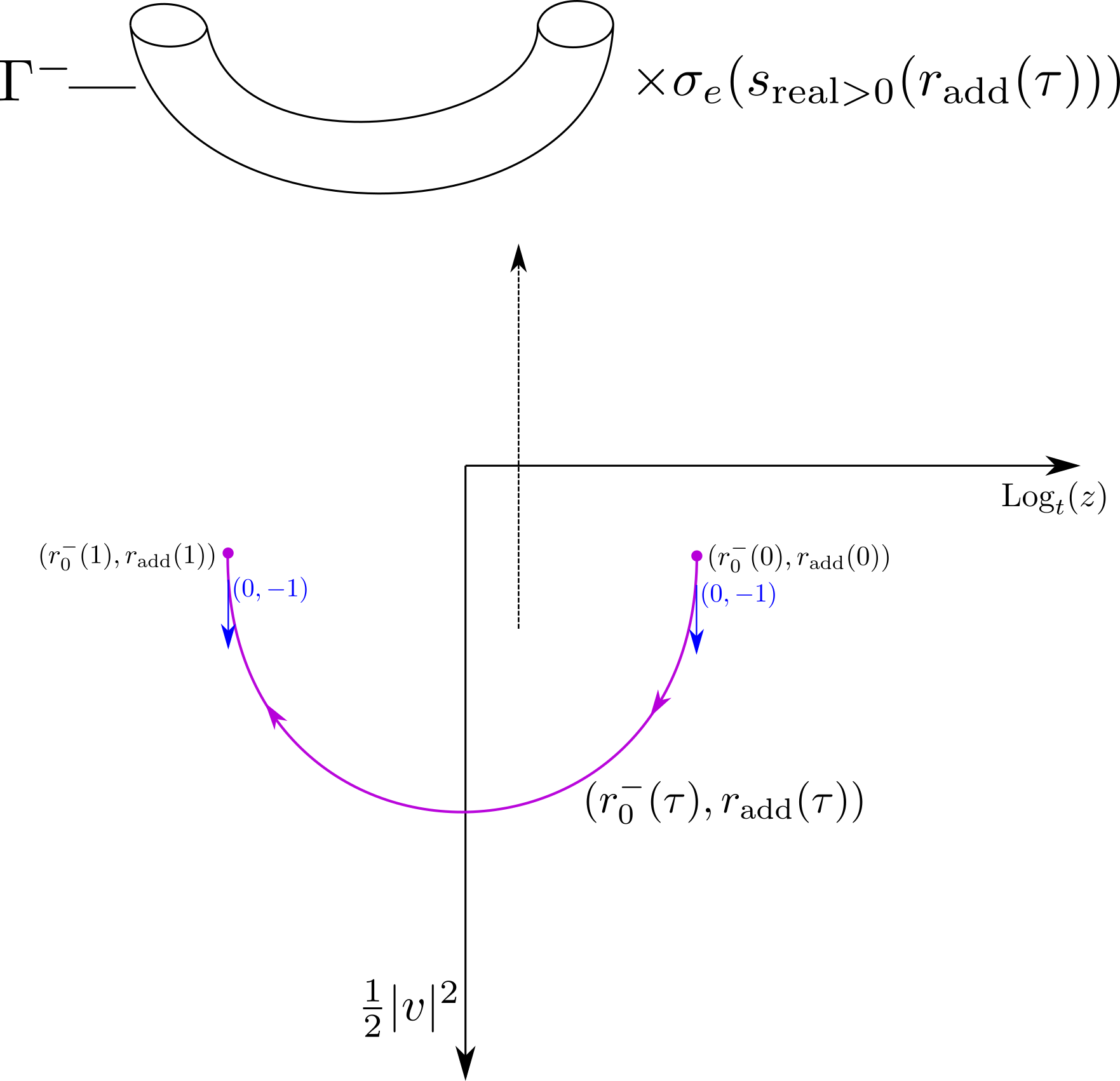}
        \caption{The lifting of $\Gamma^-$}
        \label{fig_lifting_of_Gamma-}
    \end{figure}
    
    \item For $\Gamma_{\text{out}}$, it is given as 
    \begin{align*}
        \Gamma_{\text{out}}:=\{(u,v,z)\in Y^t\ |\ &z\in\tilde{\sigma}_a(1),\\
        &v=\exp(\sqrt{-1}(\arg(\frac{\mu W_{\Sigma}(z)+(1-\mu)t^{\lambda_j}z^{v_j})}{z^{v_j}}))|\frac{W_{\Sigma}(z)}{W_{\Sigma}(s_{\real>0}(r_{\add}(1)))}|^{\mu}\sqrt{2r_0^-(1)},\\
        &\mu\in[0,1]\},
    \end{align*}
    with orientation $-\frac{\partial}{\partial\mu}\wedge or(\tilde{\sigma}_a(\tau))$.
    It is well-defined since 
    \begin{align*}
        &|\mu W_{\Sigma}(z)+(1-\mu)t^{\lambda_j}z^{v_j}|\\
        \geq&|t^{\chi_j(r_{\add}(1))}|-\mu(1+\sum_{k\neq j}|t^{\chi_k(r_{\add}(1))}|)\\
        >&|t|^{\chi_j(r_{\text{add}}(1))}-1-(p-1)\max_{k\neq j}\{|t|^{c_k}\}\\
        >&0.
    \end{align*}
    When $\mu=0$, $v=\sqrt{2r^-_0(1)}$, which is a component of $\partial\Gamma^-$. When $\mu=1$, we have that $u=\exp(\sqrt{-1}\arg(z^{v_j}))\frac{|W_{\Sigma}(s_{\real>0}(r_{\add}(1)))|}{\sqrt{2r_0^-(1)}}=\exp(\sqrt{-1}\arg(z^{v_j}))\sqrt{2r^+_0(1)}$, which is a component of $\partial\Gamma^{+,\text{part}2}$. So $\Gamma_{\text{out}}$ glues $\Gamma^{+,\text{part}2}$ and $\Gamma^-$.
    
    \item For $\Gamma_{\text{in}}$, it is given as 
    \begin{align*}
        \Gamma_{\text{in}}:=\{(u,v,z)\in Y^t\ |\ &z\in\tilde{\sigma}_a(0),\\
        &u=\exp(\sqrt{-1}\arg(\mu W_{\Sigma}(z)+(1-\mu)))|\frac{W_{\Sigma}(z)}{W_{\Sigma}(s_{\real>0}(r_{\add}(0)))}|^{\mu}\sqrt{2r_0^+(0)}, \mu\in[0,1]\},
    \end{align*}
    with orientation $-\frac{\partial}{\partial\mu}\wedge or(\tilde{\sigma}_a(0))$.
    It is well-defined since
    \begin{align*}
        &|\mu W_{\Sigma}(z)+(1-\mu))|\\
        \geq&1-\mu(|t^{\chi_j(r_{\add}(0))}|+\sum_{k\neq j}|t^{\chi_k(r_{\add}(0))}|)\\
        >&1-|t|^{\chi_j(r_{\text{add}}(0))}-(p-1)\max_{k\neq j}\{|t|^{c_k}\}\\
        >&0.
    \end{align*}
    When $\mu=0$, we have that $u=\sqrt{2r^+_0(0)}$, which is a component of $\partial\Gamma^{+,\text{part} 1}$. When $\mu=1$, we have that $v=\frac{|W_{\Sigma}(s_{\real>0}(r_{\add}(0)))|}{\sqrt{2r_0^+(0)}}=\sqrt{2r^{-}_0(0)}$, which is a component of $\partial\Gamma^-$. So $\Gamma_{\text{in}}$ glues $\Gamma^{+,\text{part}1}$ and $\Gamma^-$.
    \begin{figure}[htbp!]
        \centering
        \includegraphics[scale=0.7]{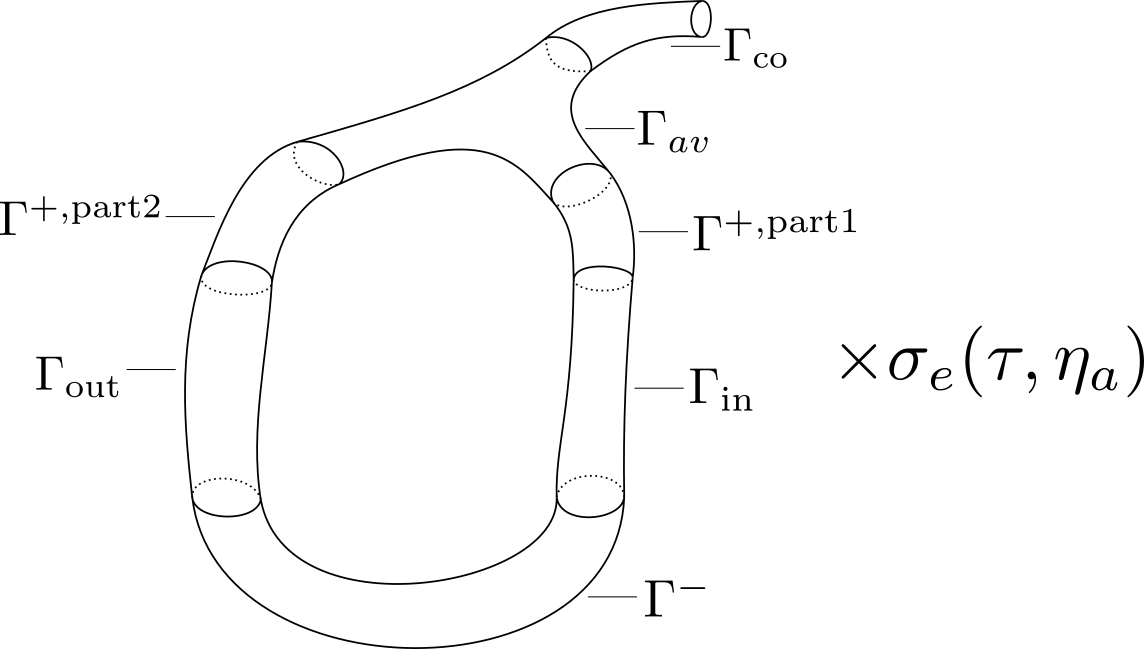}
        \caption{Gluing of $\Gamma^+$ and $\Gamma^-$}
        \label{fig_gluing_of_gamma+_gamma-}
    \end{figure}
    \item For $\Gamma_{\text{co}}$, it is given by
    \begin{align*}
        \Gamma_{\text{co}}:=\{(u,v,z)\in Y^t\ |\ &z\in\sigma_e(s_{\real>0}(r_{\add}(1/2))),\\
        &
        u=\big(\mu|W_{\Sigma}(z)|^{1/2}+((1-\mu)\sqrt{2r^+(1/2)})\big)e^{\sqrt{-1}\theta_0}, \theta_0\in[0,2\pi], \mu\in[0,1]\},
    \end{align*}    
    with orientation $\frac{\partial}{\partial\mu}\wedge\frac{\partial}{\partial\theta_0}\wedge or(\sigma_e(s_{\real>0}(r_{\add}(1/2))))$.
    When $\mu=0$, we have that $u=\sqrt{2r^+(1/2)}e^{\sqrt{-1}\theta_0}$, which is a component of $\partial\Gamma_{av}$. When $\mu=1$, we have that $u=|W_{\Sigma}(z)|^{1/2}e^{\sqrt{-1}\theta_0}$, which is a component of $\partial\Gamma_a$. So it glues $\Gamma_a$ and $\Gamma_{av}$.
\end{itemize}

We now give the integrals of $\Omega$ on the pieces. We parameterize $\tilde{\sigma}_a$ as $\tilde{\sigma}_a=\sigma_e+\eta_a\gamma_a$, $\eta_a\in[0,1]$. Thus $or(\tilde{\sigma}_a)=\gamma_a\wedge or(\sigma_e)$. We denote $\tilde{\sigma}_a(s_{\real>0}(r_{\add}(\tau)))$ and $\sigma_e(e^{2\pi\sqrt{-1}\eta_a\gamma_a}\cdot s_{\real>0}(r_{\add}(\tau)))$ by $\tilde{\sigma}_a(\tau)$ and $\sigma_e(\tau,\eta_a)$ respectively. Then 

\begin{align*}
    \int_{\Gamma^{+,\text{part}1}}\Omega=& (\sqrt{-1})^n \int_{\tau\in[0,\frac{1}{2}]}\int_{[\tilde{\sigma}_a(\tau)]}\frac{\partial\log(\sqrt{2r_0^+(\tau)})}{\partial \tau}d\tau d\theta_1\cdots d\theta_n\\
    =&(2\pi\sqrt{-1})^nw(e)\big(\log(\sqrt{2r^+_0(1/2)}-\log(\sqrt{2r^+_0(0)}))\big).
\end{align*}

\begin{align*}
    \int_{\Gamma_{av}}\Omega=&(\sqrt{-1})^{n+1}\int_{s\in[0,1]}\int_{[\tilde{\sigma}_a(1/2)]}\langle v_j,\theta\rangle ds d\theta_1\cdots d\theta_n\\
        =&(\sqrt{-1})^{n+1}\int_{s\in[0,1]}\int_{\eta_a\in[0,1]}\int_{[\sigma_e(1/2,\eta_a)]}2\pi\langle v_j,\gamma_a\rangle\eta_a\sum_{i=1}^n (-1)^{i-1}\gamma_{a,i} dsd\eta_ad\theta_1\cdots \widehat{d\theta_i}\cdots d\theta_n\\
        =&\frac{1}{2}2\pi(\sqrt{-1})^{n+1}\int_{[\sigma_e(1/2,\eta_a)]}\sum_{i=1}^n (-1)^{i-1}\gamma_{a,i}d\theta_1\cdots \widehat{d\theta_i}\cdots d\theta_n\\
        =&\frac{1}{2}2\pi(\sqrt{-1})^{n+1}w(e)\langle\gamma_a,v_j\rangle\\
        =&\frac{1}{2}(2\pi\sqrt{-1})^{n+1}w(e).
\end{align*}

\begin{align*}
    \int_{\Gamma^{+,\text{part}2}}\Omega=&\int_{\tau\in[\frac{1}{2},1]}\int_{\eta_a\in[0,1]}\int_{[\sigma_e(\tau,\eta_a)]}(\frac{\partial\log(\sqrt{2r_0^+(\tau)})}{\partial\tau}d\tau+2\pi\sqrt{-1}d\eta_a)\\
    &(\log t r_{\add,1}'(\tau)d\tau+\sqrt{-1}d\theta_1)\cdots(\log t r_{\add,n}'(\tau)d\tau+\sqrt{-1}d\theta_n)\\
    =&(\sqrt{-1})^n\big(\int_{\tau\in[\frac{1}{2},1]}\int_{[\tilde{\sigma}_a(\tau)]}\frac{\partial\log(\sqrt{2r_0^+(\tau)})}{\partial \tau}d\tau d\theta_1\cdots d\theta_n\\
    &+2\pi\sum\limits_{i=1}^{n}\int_{\tau\in[\frac{1}{2},1]}\int_{\eta_a\in[0,1]}\int_{[\sigma_e(\tau,\eta_a)]}(-1)^{i}\log t r'_{\add,i}(\tau)d\tau d\eta_ad\theta_1\cdots \widehat{d\theta_i}\cdots d\theta_n\big)\\
    =&(2\pi\sqrt{-1})^nw(e)\big(\log(\sqrt{2r^+_0(1)})-\log(\sqrt{2r^+_0(1/2)})-\log t \langle r_{\add}(1)-r_{\add}(1/2),v_j\rangle\big).
\end{align*}

\begin{align*}
    \int_{\Gamma^{-}}\Omega=&\int_{\Gamma^{-}}-d\log vd\log z_1\cdots d\log z_n\\
    =&(\sqrt{-1})^n \int_{\tau\in[0,1]}\int_{[-\tilde{\sigma}_a(\tau)]}-\frac{\partial\log(\sqrt{2r_0^-(\tau)})}{\partial \tau}d\tau d\theta_1\cdots d\theta_n\\
    =&(2\pi\sqrt{-1})^nw(e)\big(\log(\sqrt{2r^-_0(1)})-\log(\sqrt{2r^-_0(0)})\big).
\end{align*}

\begin{align*}
    \int_{\Gamma_{\text{out}}}\Omega=&(\sqrt{-1})^n \int_{\mu\in[0,1]}\int_{-[\tilde{\sigma}_a(1)]}-d\log vd\theta_1d\theta_2\cdots d\theta_n\\
    =&(\sqrt{-1})^{n}\int_{[\tilde{\sigma}_a(1)]}\log((t^{\lambda_j}z^{v_j})^{-1}W_{\Sigma}(z))d\theta_1d\theta_2\cdots d\theta_n\\
    &+(\sqrt{-1})^n\int_{[\tilde{\sigma}_a(1)]}\big(\log|t^{\lambda_j}z^{v_j}|-\log|W_{\Sigma}(s_{\real>0}(r_{\add}(1)))|\big)d\theta_1d\theta_2\cdots d\theta_n\\
    =&(\sqrt{-1})^{n}\int_{[\tilde{\sigma}_a(1)]}\log((t^{\lambda_j}z^{v_j})^{-1}W_{\Sigma}(z))d\theta_1d\theta_2\cdots d\theta_n\\
    &+(2\pi\sqrt{-1})^nw(e)(\log t^{\lambda_j}+\log t \langle r_{\add}(1),v_j\rangle -\log\sqrt{2r_0^+(1)}-\log\sqrt{2r_0^-(1)}.
\end{align*}
    
\begin{align*}
    \int_{\Gamma_{\text{in}}}\Omega=&(\sqrt{-1})^n\int_{\mu\in[0,1]}\int_{-[\tilde{\sigma}_a(0)]}d\log ud\theta_1d\theta_2\cdots d\theta_n\\
    =&(\sqrt{-1})^{n}\int_{[\tilde{\sigma}_a(0)]}-\log(W_{\Sigma}(z))d\theta_1d\theta_2\cdots d\theta_n+(\sqrt{-1})^{n}\int_{[\tilde{\sigma}_a(0)]}\log|W_{\Sigma}(s_{\real>0}(r_{\add}(0)))|d\theta_1d\theta_2\cdots d\theta_n\\
    =&(\sqrt{-1})^{n}\int_{[\tilde{\sigma}_a(0)]}-\log(W_{\Sigma}(z))d\theta_1d\theta_2\cdots d\theta_n\\
    &+(2\pi\sqrt{-1})^nw(e)(\log(\sqrt{2r^-_0(0)})+\log(\sqrt{2r^+_0(0)}).
\end{align*}

\begin{align*}
    \int_{\Gamma_{\text{co}}}\Omega=&0.
\end{align*}
As a result, the integral $\int_{\Gamma_{\add}}\Omega$ is the sum of the integrals on all the pieces and it is

\begin{align}
\label{eq_integral_on_Gamma_add}
    \int_{\Gamma_{\add}}\Omega=&(2\pi\sqrt{-1})^nw(e)\big(\langle\log t\cdot r_{\add}(\frac{1}{2}),v_j\rangle+\log t^{\lambda_j}\big)+\frac{1}{2}(2\pi\sqrt{-1})^{n+1}w(e)\\
    &-(\sqrt{-1})^{n}\big(\int_{[\tilde{\sigma}_a(0)]}\log(W_{\Sigma}(z))d\theta_1d\theta_2\cdots d\theta_n-\int_{[\tilde{\sigma}_a(1)]}\log((t^{\lambda_j}z^{v_j})^{-1}W_{\Sigma}(z))d\theta_1d\theta_2\cdots d\theta_n\big).\nonumber
\end{align}

\begin{remark}
\label{remark_relation_RS_formula}
This is an explicit example of the computation of period integrals as in \cite{Ruddat-Siebert_2019} for the case of the relative loop around the focus-focus singularity. The tropical $1$-cycle represented in the $(u,z)$-cluster chart and $(v,z)$-cluster chart are shown in Figure \ref{fig_lifting_of_Gamma+} and Figure\ref{fig_lifting_of_Gamma-} respectively, with the blue arrows representing the chosen sections of the local system induced by the affine structure.  
\end{remark}

Now we set $\Gamma_{a,\text{total}}=\Gamma_a\cup\Gamma_{\add}$, then in order to get $\int_{\Gamma_a}\Omega$, we also need to evaluate $\int_{\Gamma_{a,\text{total}}}\Omega$. To do it, we isotope it to a \textit{singular fiber} of $Y^t$. We need the following lemmas to give the definition of singular fiber.

\begin{lemma}
\label{lemma_sing_fiber_1}
For $c_1,c_2\in\R$ with $c_1\geq0$, there exists a constant $C(c_1,c_2)>0$, such that
$$
   \frac{|r^{c_1}e^{\sqrt{-1}c_2\alpha}-1|}{|re^{\sqrt{-1}\alpha}-1|}<C(c_1,c_2)
$$
for $(r,\alpha)\in([0,1]\times\R/2\pi\Z)\backslash(1,[0])$.
\end{lemma}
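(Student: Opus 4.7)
The plan is to combine compactness of the parameter domain with a local analysis at the unique point where the denominator vanishes. Note that on the compact set $K = [0,1] \times \mathbb{R}/2\pi\mathbb{Z}$, the denominator $|re^{\sqrt{-1}\alpha} - 1|$ vanishes precisely at $(r,\alpha) = (1,[0])$. Hence the ratio is continuous, and therefore bounded, on any closed subset of $K$ disjoint from a neighborhood of $(1,[0])$. The entire problem reduces to showing that the ratio stays bounded as $(r,\alpha) \to (1,[0])$.

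For the local analysis, write $r = 1 + s$ with $s$ small and keep $\alpha$ small. First-order Taylor expansions give
\[
    re^{\sqrt{-1}\alpha} - 1 = s + \sqrt{-1}\,\alpha + O(s^2 + \alpha^2),
    \qquad
    r^{c_1}e^{\sqrt{-1}c_2\alpha} - 1 = c_1 s + \sqrt{-1}\,c_2 \alpha + O(s^2 + \alpha^2).
\]
The leading-order ratio $|c_1 s + \sqrt{-1}\,c_2 \alpha| / |s + \sqrt{-1}\,\alpha|$ is bounded by $\max(|c_1|, |c_2|)$, since $c_1^2 s^2 + c_2^2 \alpha^2 \leq \max(c_1^2, c_2^2)(s^2 + \alpha^2)$. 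The higher-order corrections, after division by $|s + \sqrt{-1}\,\alpha|$, contribute at most $O(|s| + |\alpha|)$ and are therefore bounded on a sufficiently small neighborhood of $(1,[0])$. Taking $C(c_1,c_2)$ to be the maximum of this local bound and the continuity bound on the compact complement yields the result.

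The one mildly delicate point is the boundary behavior at $r = 0$ when $c_1 = 0$, where $r^{c_1}$ is ambiguous; we interpret it as the continuous extension $r^0 \equiv 1$, so the numerator is bounded by $2$ everywhere while the denominator stays bounded below away from $(1,[0])$. The main obstacle is really just the delicate cancellation between numerator and denominator near $(1,[0])$, which the Taylor expansion resolves cleanly because both vanish to first order with gradients that are comparable up to the factor $\max(|c_1|,|c_2|)$.
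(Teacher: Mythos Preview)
Your proof is correct. Both you and the paper reduce to a local analysis near $(1,[0])$ via compactness of $[0,1]\times\R/2\pi\Z$, but the local arguments differ. The paper writes
\[
\frac{r^{c_1}e^{\sqrt{-1}c_2\alpha}-1}{re^{\sqrt{-1}\alpha}-1}
= r^{c_1}e^{\sqrt{-1}c_1\alpha}\cdot\frac{e^{\sqrt{-1}(c_2-c_1)\alpha}-1}{e^{\sqrt{-1}\alpha}-1}\cdot\frac{e^{\sqrt{-1}\alpha}-1}{re^{\sqrt{-1}\alpha}-1}
+\frac{r^{c_1}e^{\sqrt{-1}c_1\alpha}-1}{re^{\sqrt{-1}\alpha}-1},
\]
then observes that the two outer factors of the first summand and the second summand have limits $c_2-c_1$ and $c_1$ at $(1,0)$ (the latter because $r^{c_1}e^{\sqrt{-1}c_1\alpha}=(re^{\sqrt{-1}\alpha})^{c_1}$), while the middle factor $\bigl|\tfrac{e^{\sqrt{-1}\alpha}-1}{re^{\sqrt{-1}\alpha}-1}\bigr|$ is bounded by $\sqrt{2}$ on a sector. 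Your Taylor expansion is more direct and arguably cleaner: both numerator and denominator vanish to first order with linear parts $c_1 s+\sqrt{-1}c_2\alpha$ and $s+\sqrt{-1}\alpha$, giving the bound $\max(|c_1|,|c_2|)$ immediately. One small point of care: when you say the higher-order corrections ``after division by $|s+\sqrt{-1}\alpha|$'' are $O(|s|+|\alpha|)$, you should also note that the $O(s^2+\alpha^2)$ term in the \emph{denominator} only changes $|D|$ by a factor in $[\tfrac12,\tfrac32]$ near $(1,[0])$, so dividing by the true denominator rather than its linear part costs at most a factor of $2$. This is routine but worth stating.
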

\begin{proof}
    It is sufficient to show that there is a neighbourhood $V$ of $(1,[0])$ such that $\frac{|r^{c_1}e^{\sqrt{-1}c_2\alpha}-1|}{|re^{\sqrt{-1}\alpha}-1|}$ is bounded in $V\backslash(1,[0])$. To see this, note that
    \begin{align*}
        \frac{r^{c_1}e^{\sqrt{-1}c_2\alpha}-1}{re^{\sqrt{-1}\alpha}-1}     =r^{c_1}e^{\sqrt{-1}c_1}\frac{e^{\sqrt{-1}(c_2-c_1)\alpha}-1}{e^{\sqrt{-1}\alpha}-1}\frac{e^{\sqrt{-1}\alpha}-1}{re^{\sqrt{-1}\alpha}-1}+\frac{r^{c_1}e^{\sqrt{-1}c_1\alpha}-1}{re^{\sqrt{-1}\alpha}-1}.
    \end{align*}
    Now since $\lim\limits_{(r,\alpha)\rightarrow(1,0)}r^{c_1}e^{\sqrt{-1}c_1}\frac{e^{\sqrt{-1}(c_2-c_1)\alpha}-1}{e^{\sqrt{-1}\alpha}-1}=(c_2-c_1)$, $\lim\limits_{(r,\alpha)\rightarrow(1,0)}\frac{r^{c_1}e^{\sqrt{-1}c_1\alpha}-1}{re^{\sqrt{-1}\alpha}-1}=c_1$, and $|\frac{e^{\sqrt{-1}\alpha}-1}{re^{\sqrt{-1}\alpha}-1}|<\sqrt{2}$ for $0<r<1, \alpha\in(-\frac{1}{2}\pi,\frac{1}{2}\pi)$, such $V$ exists and thus the constant $C(c_1,c_2)$ exists.
\end{proof}

\begin{lemma}
\label{lemma_solution_of_W}
For sufficiently small $|t|$, suppose $x$ is a point in $U_j\cap F_{j,\trop}$ and set $\beta_{x,\text{amoeba}}=(x+\R\cdot v_j)\cap U_j\cap\mathscr{A}_t(W_{\Sigma,j})$, then for a point $\eta=(\eta_1,\eta_2,\cdots,\eta_{n-1})\in[0,2\pi]^{n-1}$, there exists a unique $\tau\in\R$ and $\alpha\in\R/2\pi\Z$ such that
$$
    x+\tau v_j\in\beta_{x,\text{amoeba}}
$$
and 
$$
W_{\Sigma,j}\big(\exp\big(\log t (x+\tau v_j)+\sqrt{-1}(\alpha \gamma_a+\sum_{i=1}^{n-1}\eta_i\xi_a^{\perp,i})\big)\big)=0.
$$
\end{lemma}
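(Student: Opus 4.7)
The plan is to rewrite $W_{\Sigma,j}(z)=0$ as a small perturbation of the simple-model equation $1+t^{\lambda_j}z^{v_j}=0$ and combine a Rouch\'{e} argument near the simple-model zero with a uniform lower bound on the complement. First, parametrize $z(\tau,\alpha):=\exp\!\bigl(\log t\cdot(x+\tau v_j)+\sqrt{-1}(\alpha\gamma_a+\sum_{i=1}^{n-1}\eta_i\xi_a^{\perp,i})\bigr)$ with $\eta$ fixed. Using $\langle v_j,\gamma_a\rangle=1$, $\langle v_j,\xi_a^{\perp,i}\rangle=0$, and $\chi_j(x)=0$ (since $x\in F_{j,\trop}$), a direct computation (taking $t>0$ real) gives $t^{\lambda_j}z^{v_j}=|t|^{\tau|v_j|^2}e^{\sqrt{-1}\alpha}$. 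Write $F(\tau,\alpha):=W_{\Sigma,j}(z(\tau,\alpha))=F_0+h$ with $F_0:=1+|t|^{\tau|v_j|^2}e^{\sqrt{-1}\alpha}$ and $h:=-\rho_{j,t}(z)\sum_{k\neq j}t^{\lambda_k}z^{v_k}$. On $U_j$, the bound $\chi_k(x+\tau v_j)\geq c_k>0$ for $k\neq j$ gives $|h|\leq(p-1)\max_k|t|^{c_k}\to 0$ as $|t|\to 0^+$, uniformly in $(\alpha,\eta)$.

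Next, $F_0$ has a unique zero at $(\tau,\alpha)=(0,\pi)$, where its real Jacobian is the diagonal matrix $\operatorname{diag}(-|v_j|^2\log|t|,\,-1)$, hence invertible for $|t|<1$. A quantitative implicit function / Rouch\'{e} argument on a small neighborhood $V_\delta$ of $(0,\pi)$ produces a unique zero $(\tau^*,\alpha^*)$ of $F$ in $V_\delta$, provided $|t|$ is small enough that $|F_0|>|h|$ holds on $\partial V_\delta$. For global uniqueness I bound $|F_0|$ below on the complement of $V_\delta$ inside $\{(\tau,\alpha):x+\tau v_j\in U_j\}\times\R/2\pi\Z$: when $|t|^{\tau|v_j|^2}$ is far from $1$, $|F_0|\geq\bigl||t|^{\tau|v_j|^2}-1\bigr|$ is bounded below; when $|t|^{\tau|v_j|^2}\approx 1$ but $|\alpha-\pi|\geq\delta_2$, $|F_0|\gtrsim 2\sin(\delta_2/2)$ up to small corrections, with Lemma \ref{lemma_sing_fiber_1} controlling the relevant ratios uniformly. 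Combined with $|h|=O(|t|^c)$ for $c=\min_k c_k>0$, this gives $|F|\geq|F_0|-|h|>0$ outside $V_\delta$ for small $|t|$. The unique solution then satisfies $\tau^*\approx 0$, so $x+\tau^*v_j\in U_j$, and $F(\tau^*,\alpha^*)=0$ automatically places $x+\tau^*v_j\in\mathscr{A}_t(W_{\Sigma,j})\cap U_j\cap(x+\R v_j)=\beta_{x,\text{amoeba}}$.

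The main difficulty is calibrating $V_\delta$ together with the smallness threshold on $|t|$ so that the local Rouch\'{e} step and the global lower bound coexist. Because the Jacobian grows like $|\log|t||$ in the $\tau$-direction, $V_\delta$ is naturally taken to shrink like $1/|\log|t||$ in $\tau$ while staying of fixed size in $\alpha$, and verifying that $|F_0|$ on $\partial V_\delta$ dominates $|h|=O(|t|^c)$ relies on the fact that $|t|^c$ decays faster than any inverse power of $|\log|t||$. Once that quantitative bookkeeping is carried out, existence, uniqueness, and the amoeba condition follow simultaneously.
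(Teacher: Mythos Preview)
Your outline is essentially correct but takes a genuinely different route from the paper. You treat $W_{\Sigma,j}(z(\tau,\alpha))$ as a perturbation $F_0+h$ of the simple model $F_0=1+|t|^{\tau|v_j|^2}e^{\sqrt{-1}\alpha}$, locate its unique zero near $(0,\pi)$ by a degree argument, and rule out other zeros by the lower bound $|F_0|>|h|$ on the complement of $V_\delta$. Two small corrections: the remainder should be $h=(1-\rho_{j,t}(z))\sum_{k\neq j}t^{\lambda_k}z^{v_k}$, not $-\rho_{j,t}(z)\sum_{k\neq j}(\cdots)$; and since $F$ is not holomorphic in a single complex variable, ``Rouch\'e'' should be read as a Brouwer--degree/homotopy argument, with local uniqueness inside $V_\delta$ coming from nonsingularity of the real Jacobian (your $|Dh|\ll|DF_0|$ check). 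The paper proceeds differently: it proves \emph{global injectivity} of $(\tau,\alpha)\mapsto W_{\Sigma,j}(z(\tau,\alpha))$ by comparing the dominant difference $|t^{\chi_j(\cdot)}e^{\sqrt{-1}\alpha_1}-t^{\chi_j(\cdot)}e^{\sqrt{-1}\alpha_2}|$ against each $k\neq j$ difference, and this is exactly where Lemma~\ref{lemma_sing_fiber_1} is used, bounding the ratios $\bigl|r^{c_1}e^{\sqrt{-1}c_2\alpha}-1\bigr|/\bigl|re^{\sqrt{-1}\alpha}-1\bigr|$ uniformly (your invocation of that lemma in the lower-bound step is unnecessary, since you already have $|h|=O(|t|^{c})$). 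Existence in the paper then follows from a connectedness argument: the set $\Theta\subset[0,2\pi]^{n-1}$ of $\eta$'s admitting a solution is nonempty, closed, and open (implicit function theorem, whose Jacobian hypothesis is supplied by the injectivity estimate), hence all of $[0,2\pi]^{n-1}$. Your approach is more hands-on and yields existence and uniqueness for each fixed $\eta$ at once, at the cost of the $t$-dependent calibration of $V_\delta$; the paper's approach avoids that scaling by doing the difference estimate globally and deferring existence to a soft topological step.
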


\begin{proof}
    Let us first show the uniqueness. Let us use $z(\tau,\alpha)$ to denote $\exp\big(\log t (x+\tau v_j)+\sqrt{-1}(\alpha \gamma_a+\sum_{i=1}^{n-1}\eta_i\xi_a^{\perp,i})\big)$, then it is sufficient to show that for a fixed point $\eta=(\eta_1,\eta_2,\dots,\eta_{n-1})$, $W_{\Sigma,j}(z(\tau_1,\alpha_1))-W_{\Sigma,j}(z(\tau_2,\alpha_2))\neq0$ whenever $(\tau_1,\alpha_1)\neq(\tau_2,\alpha_2)$. Note that 
    \begin{align*}
        &W_{\Sigma,j}(z(\tau_1,\alpha_1))-W_{\Sigma,j}(z(\tau_2,\alpha_2))\\
        =&(t^{\chi_j(x+\tau_1v_j)}e^{\sqrt{-1}\alpha_1}-t^{\chi_j(x+\tau_2v_j)}e^{\sqrt{-1}\alpha_2})\\
        &+\sum_{k\neq j}e^{\sqrt{-1}\sum\limits_{i=1}^{n-1}\eta_i\langle\xi_a^{\perp,i},v_k\rangle}\big(\rho_{j,t}(z(\tau_1,\alpha_1))t^{\chi_k(x+\tau_1v_j)}e^{\sqrt{-1}\alpha_1\langle \gamma_a,v_k\rangle}-\rho_{j,t}(z(\tau_2,\alpha_2))t^{\chi_k(x+\tau_2v_j)}e^{\sqrt{-1}\alpha_2\langle \gamma_a,v_k\rangle}\big).
    \end{align*}
    Since $\rho_{j,t}$ is constant on $\beta_{x,\text{amoeba}}$ according to Appendix \ref{appx_proof_of_defomation_lemmma} and $|\rho_{j,t}|\leq 1$, it is sufficient to show that 
    \begin{align*}
        &\frac{1}{p-1}|t^{\chi_j(x+\tau_1v_j)}e^{\sqrt{-1}\alpha_1}-t^{\chi_j(x+\tau_2v_j)}e^{\sqrt{-1}\alpha_2}|\\
        &>|t^{\chi_k(x+\tau_1v_j)}e^{\sqrt{-1}\alpha_1\langle \gamma_a,v_k\rangle}-t^{\chi_k(x+\tau_2v_j)}e^{\sqrt{-1}\alpha_2\langle \gamma_a,v_k\rangle}|
    \end{align*}
    for $k\neq j$.
    Now if $\tau_1=\tau_2$, we need to show that 
    \begin{align*}
        \frac{1}{p-1}|t^{\chi_j(x+\tau_1 v_j)-\chi_k(x+\tau_1 v_j)}|>|\frac{e^{\sqrt{-1}\langle \gamma_a,v_k\rangle(\alpha_1-\alpha_2)}-1}{e^{\sqrt{-1}(\alpha_1-\alpha_2)}-1}|,
    \end{align*}
    if $\tau_1\neq\tau_2$, say $\tau_1>\tau_2$, then if $\langle v_j,v_k\rangle\geq0$, we need to show that
    \begin{align*}
        \frac{1}{p-1}|t^{\chi_j(x+\tau_2 v_j)-\chi_k(x+\tau_2 v_j)}|>|\frac{t^{\langle v_j,v_k\rangle(\tau_1-\tau_2)}e^{\sqrt{-1}\langle \gamma_a,v_k\rangle(\alpha_1-\alpha_2)}-1}{t^{\langle v_j,v_j\rangle(\tau_1-\tau_2)}e^{\sqrt{-1}(\alpha_1-\alpha_2)}-1}|,
    \end{align*}
    if $\langle v_j,v_k\rangle\leq0$, we need to show that
    \begin{align*}
        \frac{1}{p-1}|t^{\chi_j(x+\tau_2 v_j)-\chi_k(x+\tau_1 v_j)}|>|\frac{t^{-\langle v_j,v_k\rangle(\tau_1-\tau_2)}e^{\sqrt{-1}\langle -\gamma_a,v_k\rangle(\alpha_1-\alpha_2)}-1}{t^{\langle v_j,v_j\rangle(\tau_1-\tau_2)}e^{\sqrt{-1}(\alpha_1-\alpha_2)}-1}|.
    \end{align*}
    Note that $\chi_j(x+\tau_2 v_j)-\chi_k(x+\tau_1 v_j)<c_j-\min_{k\neq j}\{c_k\}<0$ since $\beta_{x,\text{amoeba}}\subset U_j$, so according to Lemma \ref{lemma_sing_fiber_1}, there is a constant $C>0$ which is independent of $\tau,\alpha$ such that the right hand sides of the inequalities are all smaller than $C$ when $t<1$. Thus we can choose sufficiently small $|t|$ so that all the inequalities are satisfied, which gives the uniqueness.\\
    Now for the existence, set
    $$
    F(\tau,\alpha,\eta_1,\eta_2,\dots,\eta_{n-1})=W_{\Sigma,j}\big(\exp\big(\log t (x+\tau v_j)+\sqrt{-1}(\alpha \gamma_a+\sum_{i=1}^{n-1}\eta_i\xi_a^{\perp,i})\big)\big)
    $$
    and 
    $$
    \Theta=\{(\eta_1,\eta_2,\cdots,\eta_{n-1})\in[0,2\pi]^{n-1}\ |\ \exists\ \tau, \alpha \text{ such that }F(\tau,\alpha,\eta_1,\eta_2,\dots,\eta_{n-1})=0\}.
    $$
    Then $\Theta\neq\emptyset$ since $\beta_{x,\text{amoeba}}\neq\emptyset$. We have that $\Theta$ is closed since $\beta_{x,\text{amoeba}}$ is bounded. Since 
    $$
    \det\begin{pmatrix}
    \frac{\partial Re(F)}{\partial\tau} & \frac{\partial Re(F)}{\partial\alpha} \\
    \frac{\partial Im(F)}{\partial\tau} & \frac{\partial Im(F)}{\partial\alpha}
    \end{pmatrix}\neq0
    $$
    for any $(\tau,\alpha,\eta_1,\dots,\eta_{n-1})$,
    we also have that $\Theta$ is open according to implicit function theorem. Thus $\Theta=[0,2\pi]^{n-1}$, which leads to the existence.
\end{proof}
We denote the unique $\tau$ and $\alpha$ by $\tau_x(\eta)$ and $\alpha_x(\eta)$. Note that when $x=a$, we have $\tau_x(\eta)\equiv 0$ and $\alpha_x(\eta)\equiv\pi$.

\begin{definition}
The \textit{singular fiber} of $Y_{j,\text{simple}}^t$ at a point $x\in U_j\cap F_{j,\trop}$ with respect to the edge $e$ with end $a$ is the $(n+1)-$cycle
\begin{align*}
    SF_{x,e}:=\{&(z,u,v)\in(\C^*)^n\times\C^2\ |\\
    &z=\exp\big(\log t (x+\tau_x(\eta) v_j)+\sqrt{-1}((\alpha_x(\eta)+\eta_a)\gamma_a+\sum_{i=1}^{n-1}\eta_i\xi_a^{\perp,i})\big),\\
    &u=|W_{\Sigma,j}(z)|^{1/2}e^{\sqrt{-1}\theta_0},v=W_{\Sigma,j}(z)|W_{\Sigma,j}(z)|^{-1/2}e^{-\sqrt{-1}\theta_0},\\ &\eta\in[0,2\pi]^{n-1},\eta_a\in[0,2\pi],\theta_0\in[0,2\pi]\}.
\end{align*}
\label{def_singularfiber}
\end{definition}
Note that it is well-defined since for $(u,v,z)\in Y_{j,\simple}^t$ with $\Log_t(z)\in U_j$, $|u|<\sqrt{2R''},|v|<\sqrt{2R''}$ and thus $\tau(u,v)=0$.
Figure \ref{fig_the_singular_fiber} shows what a singular fiber looks like. We give the integral of $\Omega$ on the singular fiber.

\begin{figure}[htbp!]
    \centering
    \includegraphics[scale=0.5]{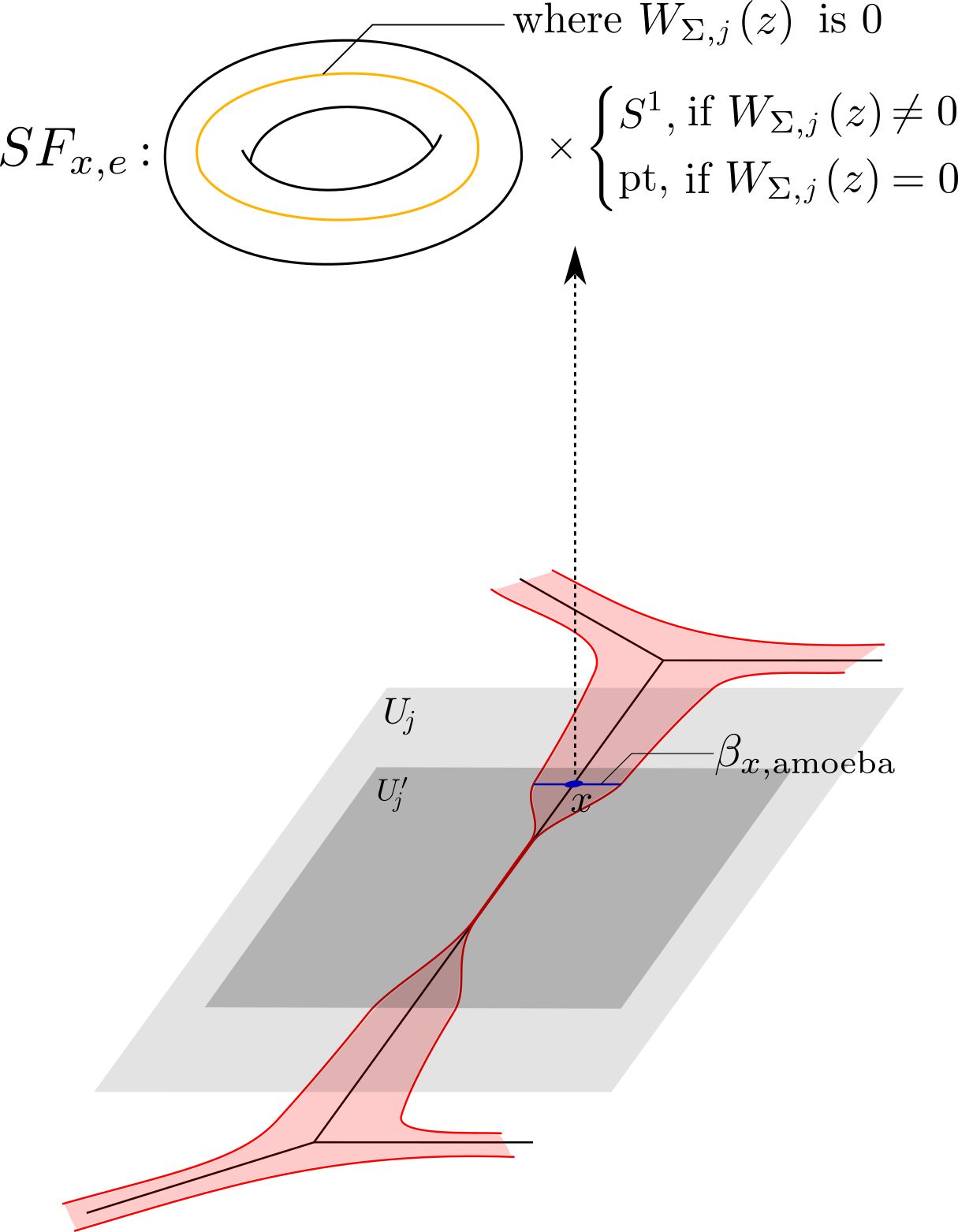}
    \caption{The singular fiber $SF_{x,e}$}
    \label{fig_the_singular_fiber}
\end{figure}

\begin{proposition}
\label{prop_integral_on_singularfiber}
    The integral of $\Omega$ on $SF_{x,e}$ is equal to $(2\pi\sqrt{-1})^{n+1}w(e)$.
\end{proposition}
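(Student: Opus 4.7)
I plan to parametrize $SF_{x,e}$ directly by its defining coordinates $(\theta_0,\eta_a,\eta_1,\ldots,\eta_{n-1})\in[0,2\pi]^{n+1}$ and pull back the form $\Omega=d\log u\wedge d\log z_1\wedge\cdots\wedge d\log z_n$ under the defining map. On the fiber one has $u=|W_{\Sigma,j}(z)|^{1/2}e^{\sqrt{-1}\theta_0}$, so $d\log u=\tfrac{1}{2}d\log|W_{\Sigma,j}(z)|+\sqrt{-1}d\theta_0$. Since $z$ depends only on $(\eta,\eta_a)$, both $d\log|W_{\Sigma,j}(z)|$ and each $d\log z_i$ lie in the $n$-dimensional span of $\{d\eta_a,d\eta_1,\ldots,d\eta_{n-1}\}$, so wedging $n+1$ such $1$-forms is automatically zero. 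Thus only the $\sqrt{-1}d\theta_0$ piece of $d\log u$ survives, and integrating $d\theta_0$ over $[0,2\pi]$ gives
\[
\int_{SF_{x,e}}\Omega=2\pi\sqrt{-1}\int_{[0,2\pi]^n}d\log z_1\wedge\cdots\wedge d\log z_n.
\]

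Next I would expand
\[
d\log z_i=\sqrt{-1}\gamma_{a,i}\,d\eta_a+\sum_{k=1}^{n-1}\bigl[(\log t)v_{j,i}\partial_k\tau_x+\sqrt{-1}\gamma_{a,i}\partial_k\alpha_x+\sqrt{-1}\xi_{a,i}^{\perp,k}\bigr]d\eta_k,
\]
so that $d\log z_1\wedge\cdots\wedge d\log z_n=\det(N)\,d\eta_a\wedge d\eta_1\wedge\cdots\wedge d\eta_{n-1}$, where the first row of $N$ is $\sqrt{-1}\gamma_a^T$ and the $(k{+}1)$-st row equals $(\log t\,\partial_k\tau_x)v_j^T+\sqrt{-1}(\partial_k\alpha_x)\gamma_a^T+\sqrt{-1}(\xi_a^{\perp,k})^T$. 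Subtracting $\partial_k\alpha_x$ times the first row from the $(k{+}1)$-st kills the $\gamma_a$ contribution in each lower row, and expanding multilinearly each lower row then contributes either $\sqrt{-1}\xi_a^{\perp,k}$ or $(\log t\,\partial_k\tau_x)v_j$; any term with two or more factors of $v_j$ vanishes by linear dependence.

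The leading term, where no lower row picks up $v_j$, equals $(\sqrt{-1})^n\det(\gamma_a,\xi_a^{\perp,1},\ldots,\xi_a^{\perp,n-1})$. Because $v_j$ is primitive, the vector $\gamma_j$ of Section 3.2 pairs with $v_j$ to $\pm 1$ and $\{\xi_j^{\perp,1},\ldots,\xi_j^{\perp,n-1}\}$ is a $\Z$-basis of $v_j^\perp\cap\Z^n$. Expanding $\gamma_a$ and each $\xi_a^{\perp,k}$ in the basis $\{\xi_j^{\perp,1},\ldots,\xi_j^{\perp,n-1},\gamma_j\}$ and computing by cofactor expansion along the $\gamma_j$-coordinate gives $|\det(\gamma_a,\xi_a^{\perp,1},\ldots,\xi_a^{\perp,n-1})|=|\langle\gamma_a,v_j\rangle|\cdot\vol(\Psi_j(\sigma_e))=w(e)$. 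Each of the remaining ``one $v_j$'' terms carries a factor of $\partial_k\tau_x$; Lemma \ref{lemma_solution_of_W} and its implicit function theorem argument show that $\tau_x(\eta)$ is a smooth single-valued function on the torus $T^{n-1}$, so $\int_0^{2\pi}\partial_k\tau_x\,d\eta_k=0$ and these corrections die after integration.

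Putting this together yields
\[
\int_{SF_{x,e}}\Omega=2\pi\sqrt{-1}\cdot(\sqrt{-1})^n w(e)\cdot(2\pi)^{n}=(2\pi\sqrt{-1})^{n+1}w(e),
\]
as claimed. The main obstacle I expect is the orientation bookkeeping needed to upgrade the absolute-value identity $|\det(\gamma_a,\xi_a^{\perp,1},\ldots,\xi_a^{\perp,n-1})|=w(e)$ to a signed equality — this requires fixing the orientation of $SF_{x,e}$ consistently with the orientation conventions used for $\Gamma_e$, $\Gamma_v$, $\Gamma_a$, and $\Gamma_\add$ so that the forthcoming identification (\ref{eq_integral_on_Gamma_a_idea}) carries the correct signs. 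By contrast, the analytic step (vanishing of the subleading terms) is essentially routine once single-valuedness of $\tau_x$ on $T^{n-1}$ is recorded.
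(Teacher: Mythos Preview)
Your argument is correct and follows the same route as the paper: parametrize $SF_{x,e}$ by $(\theta_0,\eta_a,\eta)\in[0,2\pi]^{n+1}$, pull back $\Omega$, and reduce to the determinant $\det(\gamma_a,\xi_a^{\perp,1},\ldots,\xi_a^{\perp,n-1})=\vol(\tilde\sigma_a)=w(e)$. You are in fact more careful than the paper, whose first displayed line simply omits the $\log t\,\partial_k\tau_x\,v_j$ contributions (and the $\sqrt{-1}$ factors) without comment; your row-reduction to kill the $\alpha_x$ terms and your use of the periodicity of $\tau_x$ on $T^{n-1}$ to annihilate the subleading $v_j$-terms supply exactly the steps the paper suppresses, and your flagged orientation issue is the only residual bookkeeping.
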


\begin{proof}
    Note that $SF_{x,e}$ is parametrized by $(\theta_0,\eta_a,\eta)$ as in Definition \ref{def_singularfiber}. So we have that 
    \begin{align*}
        &\int_{SF_{x,e}}\Omega\\
        =&\int_{(\theta_0,\eta_a,\eta)\in[0,2\pi]^{n+1}}d\theta_0\big(\gamma_{a,1}d(\alpha(\eta)+\eta_a)+\sum_{i=1}^{n-1}\xi_{a,1}^{\perp,i}d\eta_k\big)\cdots\big(\gamma_{a,n}d(\alpha(\eta)+\eta_a)+\sum_{i=1}^{n-1}\xi_{a,n}^{\perp,i}d\eta_k\big)\\
        =&\int_{(\theta_0,\eta_a,\eta)\in[0,2\pi]^{n+1}}\det
        \begin{bmatrix}
        \vdots&\vdots& &\vdots\\ \gamma_a&\xi_{a}^{\perp,1}&\cdots&\xi_{a}^{\perp,n-1}\\\vdots&\vdots& &\vdots
        \end{bmatrix}
        d\theta_0d\eta_ad\eta\\
        =&(2\pi\sqrt{-1})^{n+1}w(e).
    \end{align*}
\end{proof}
Now we isotope $\Gamma_{a,\text{total}}$ to a singular fiber within $Y_{j}^t$ for an end $a\in U_j\cap F_{j,\trop}$.

\begin{proposition}
\label{prop_homotopy_to_singular_fiber}
    Suppose $a$ is an end of an edge $e$ of $\beta_{\trop,C}$ with $a\in U_j\cap F_{j,\trop}$, then there is a continuous map 
    $$
    H:\Gamma_{a,\text{total}}\times[0,1]\rightarrow Y_j^t,
    $$
    such that
    $H(\Gamma_{a,\text{total}}\times\{0\})=\Gamma_{a,\text{total}}$
    and $H(\Gamma_{a,\text{total}}\times\{1\})=SF_{\tilde{a},e}$ for a point $\tilde{a}\in F_{j,\trop}\cap U_j\backslash U_j'$.
\end{proposition}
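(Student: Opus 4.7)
The plan is to construct the homotopy inside the simple model. Because $\Phi_j$ is a diffeomorphism between $Y_j^t$ and $Y^t_{j,\text{simple}}$, it suffices to build a continuous map $\tilde{H}:\Phi_j(\Gamma_{a,\text{total}})\times[0,1]\to Y^t_{j,\text{simple}}$ taking $\Phi_j(\Gamma_{a,\text{total}})$ to $SF_{\tilde{a},e}$ and then set $H=\Phi_j^{-1}\circ\tilde{H}$. In the simple model the defining equation reduces to $uv=W_{\Sigma,j}(z)$ on $\mu_0^{-1}([0,R'']^2\times(\log t\cdot U_j''))$, so every piece of $\Gamma_{a,\text{total}}$ admits an explicit formula in $(u,v,z)$, as does $SF_{\tilde{a},e}$; the homotopy will be built by interpolating the parameters defining these two cycles.

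I would carry this out in two stages. \emph{Stage 1:} contract the parameter path $r_{\add}(\tau)$ to the constant path at $\tilde{a}$ via $r_{\add}^s(\tau):=(1-s)r_{\add}(\tau)+s\tilde{a}$, $s\in[0,1]$, for a chosen $\tilde{a}\in F_{j,\trop}\cap U_j\backslash U_j'$. This simultaneously shrinks the extents of $\Gamma^{+,\text{part}1}$, $\Gamma^{+,\text{part}2}$, $\Gamma^-$, $\Gamma_{\text{in}}$, $\Gamma_{\text{out}}$ onto the fiber over $\tilde{a}$, while the boundary identifications between pieces remain consistent at every $s$ because they all reference the common path data. \emph{Stage 2:} interpolate the radial values $\sqrt{2r_0^{\pm}(\tau)}$ to $|W_{\Sigma,j}(z)|^{1/2}$, so that in the limit $|u|=|v|=|W_{\Sigma,j}(z)|^{1/2}$ throughout, which is precisely the locus used by $SF_{\tilde{a},e}$. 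After both stages the gluing pieces $\Gamma_{\text{in}}$, $\Gamma_{\text{out}}$, $\Gamma_{\text{co}}$ collapse to degenerate cycles and may be absorbed by reparameterization. The surviving $(n+1)$-cycle, parameterized by $(\theta_0,\eta_a,\eta)$, matches $SF_{\tilde{a},e}$ because the Minkowski summand $\eta_a\gamma_a$ in $\tilde{\sigma}_a$ supplies the $\eta_a$-coordinate of Definition~\ref{def_singularfiber}, the $|u|=|v|$ equator in $\Gamma_{\text{co}}$ and $\Gamma_{av}$ supplies the $\theta_0$-circle, and Lemma~\ref{lemma_solution_of_W} provides the unique $(\tau_{\tilde{a}}(\eta),\alpha_{\tilde{a}}(\eta))$ that makes the $z$-factor land on the zero set of $W_{\Sigma,j}$.

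The main obstacle I anticipate is verifying that the homotopy remains inside $Y_j^t$ throughout. The radial interpolation in Stage 2 could push $|u|$ or $|v|$ outside $[0,R]$, and Stage 1 could transit the $r_{\add}^s$-values through $U_j'$ where $\Phi_j$ differs from the identity. The first issue is controlled by the same lower bounds on $|\mu W_\Sigma+(1-\mu)t^{\lambda_j}z^{v_j}|$ and $|\mu W_\Sigma+(1-\mu)|$ used to make $\Gamma_{\text{in}}$ and $\Gamma_{\text{out}}$ well-defined, which persist along the interpolation provided the interpolation is arranged to preserve these bounds monotonically; the second is handled by noting that both $\tilde{a}$ and the original $r_{\add}$ lie in $U_j\backslash U_j'$, so the linear contraction can be taken to remain inside $U_j$, which is what is required for membership in $Y_j^t$. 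Once these checks are in place, the composite $H=\Phi_j^{-1}\circ\tilde{H}$ is the desired continuous homotopy.
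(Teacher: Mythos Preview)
Your overall strategy—push through $\Phi_j$ to the simple model, contract the base data, and normalize the radii—is the right architecture, and it matches the paper's. But the order of your two stages is reversed, and this creates a genuine gap.

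In Stage~1 you linearly contract $r_{\add}$ to a point $\tilde a\in F_{j,\trop}$. The pieces $\Gamma_{\text{in}}$ and $\Gamma_{\text{out}}$ are only well-defined because their $\arg(\cdot)$ factors never vanish: for $\Gamma_{\text{in}}$ one needs $\mu W_\Sigma(z)+(1-\mu)\neq 0$, which was guaranteed by the inequality $1>|t|^{\chi_j(r_{\add}(0))}+(p-1)\max_{k\neq j}|t|^{c_k}$. As $r_{\add}^s(0)\to\tilde a$ you have $\chi_j(r_{\add}^s(0))\to 0$, hence $|t|^{\chi_j(r_{\add}^s(0))}\to 1$, and the inequality fails for $s$ near $1$ regardless of how the interpolation is ``arranged monotonically''. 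The analogous bound for $\Gamma_{\text{out}}$ fails for the same reason. So your intermediate cycles are not defined, and the homotopy breaks before Stage~2 begins. A second, smaller omission: $\Gamma_{a,\text{total}}$ also contains $\Gamma_a$, which lives over the path $r_a$ (not $r_{\add}$), and your Stage~1 does not move it at all.

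The paper's fix is to swap the order. First it interpolates all radii to $\sqrt{|W_{\Sigma,j}(z)|}$ via the map $K(x,z,\zeta)=\zeta\sqrt{|W_{\Sigma,j}(z)|}+(1-\zeta)x$ (its $H^{1,\text{part}1}$); after this every piece sits on $|u|=|v|$ and the subsequent contraction $H^{1,\text{part}2}$ only has to transport the argument data $(\arg u,\arg v,\arg z)$, so the delicate nonvanishing bounds for $\Gamma_{\text{in}}$, $\Gamma_{\text{out}}$ are no longer needed. The contraction is then performed on $r_a\cup r_{\add}$ simultaneously, targeting the end $a$ (which lies in $U_j''$, where the simple model is literally $uv=1+t^{\lambda_j}z^{v_j}$) to obtain $SF_{a,e}$. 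Only afterwards is the singular fiber slid along $F_{j,\trop}$ from $a$ to $\tilde a$ by a separate homotopy $H^2$, using the uniqueness in Lemma~\ref{lemma_solution_of_W} to keep it inside $Y^t_{j,\text{simple}}$. If you reorder your two stages, treat $r_a$ together with $r_{\add}$, and insert this final slide, your sketch becomes correct.
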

    
\begin{proof}
The idea is that we isotope $\Phi_j(\Gamma_{a,\total})$ to $SF_{a,e}$ within $Y_{j,\simple}^t$ through 
$$
    H^1:\Phi_j(\Gamma_{a,\total})\times[0,1]\rightarrow Y_{j,\simple}^t,
$$
then isotope $SF_{a,e}$ to $SF_{\tilde{a},e}$ within $Y_{j,\simple}^t$ through 
$$
    H^2:SF_{a,e}\times[0,1]\rightarrow Y_{j,\simple}^t.
$$
Then $H$ is the pullback of $H_1$ and $H_2$ along $\Phi_j$, i.e.,
\begin{equation*}
    H((u,v,z),\zeta)=
    \begin{cases}
    \Phi_j^{-1}H^1(\Phi_j(u,v,z),2\zeta), & \zeta\in[0,1/2]\\
    \Phi_j^{-1}H^2(H^1(\Phi_j(u,v,z),1),2\zeta-1), & \zeta\in[1/2,1].
    \end{cases}
\end{equation*}
Now we come to construct $H^1$ and $H^2$. We will guarantee that for any point $(u,v,z)$ in the images of $H^1$ and $H^2$, either $\Log_t(z)\in U_j\backslash U_j'$ or $|u|<\sqrt{2R''}$ and $|v|<\sqrt{2R''}$, thus we always have $uv=W_{\Sigma,j}(z)$.\\
For $H^1$, it consists of two parts
\begin{equation*}
    H^1((u,v,z),\zeta)=
    \begin{cases}
    H^{1,\text{part}1}((u,v,z),2\zeta), & \zeta\in[0,1/2]\\
    H^{1,\text{part}2}(H^{1,\text{part}1}((u,v,z),1),2\zeta-1), & \zeta\in[1/2,1].
    \end{cases}
\end{equation*}
For $H^{1,\text{part}1}$, let us set 
$$
    K(x,z,\zeta)=\zeta\sqrt{|W_{\Sigma,j}(z)|}+(1-\zeta)x
$$ 
for $x\in\R_{\geq0}$, then $H^{1,\text{part}1}:\Phi_j(\Gamma_{a,\total})\times[0,1]\rightarrow Y_{j,\simple}^t$ is given as 
\begin{align*}
    &H^{1,\text{part}1}((u,v,z),\zeta)\\
    =&\begin{cases}
        (e^{-\sqrt{-1}\arg(v)}\frac{W_{\Sigma,j}(z)}{K(|v|,z,\zeta)},e^{\sqrt{-1}\arg(v)}K(|v|,z,\zeta),z) & (u,v,z)\in \Gamma^{-},\\
        (e^{\sqrt{-1}\arg(u)}K(|u|,z,\zeta)^{\mu(u,v,z)}(\frac{|W_{\Sigma,j}(z)|}{K(|v|,z,\zeta)})^{1-\mu(u,v,z)},\\
        e^{\sqrt{-1}\arg(v)}K(|v|,z,\zeta)^{1-\mu(u,v,z)}(\frac{|W_{\Sigma,j}(z)|}{K(|u|,z,\zeta)})^{\mu(u,v,z)},z) & (u,v,z)\in\Gamma_{\text{out}},\\
        (e^{\sqrt{-1}\arg(u)}K(|u|,z,\zeta)^{1-\mu(u,v,z)}(\frac{|W_{\Sigma,j}(z)|}{K(|v|,z,\zeta)})^{\mu(u,v,z)},\\
        e^{\sqrt{-1}\arg(v)}K(|v|,z,\zeta)^{\mu(u,v,z)}(\frac{|W_{\Sigma,j}(z)|}{K(|u|,z,\zeta)})^{1-\mu(u,v,z)},z) & (u,v,z)\in\Gamma_{\text{in}},\\
        (e^{\sqrt{-1}\arg(u)}K(|u|,z,\zeta),\frac{e^{-\sqrt{-1}\arg(u)}W_{\Sigma,j}(z)}{K(|u|,z,\zeta)},z), & \text{Otherwise},
    \end{cases}
\end{align*}
where $\mu(u,v,z)$ is determined through $|v|=|\frac{W_{\Sigma}(z)}{W_{\Sigma}(s_{\real>0}(r_{\add}(1)))}|^{\mu}\sqrt{2r_0^-(1)}$ and $|u|=|\frac{W_{\Sigma}(z)}{W_{\Sigma}(s_{\real>0}(r_{\add}(0)))}|^{\mu}\sqrt{2r_0^+(0)}$ for $(u,v,z)\in\Gamma_{\text{out}},\Gamma_{\text{in}}$ respectively. Note that $|u|=|v|$ on $H^{1,\text{part}1}(\Phi_j(\Gamma_{a,\total}),1)$.\\
For $H^{1,\text{part}2}$, we choose a continuous map
\begin{align*}
    \psi:r_a\cup r_\add\times[0,1]\rightarrow&\R^n\\
    (x,\zeta)\mapsto&\psi(x,\zeta)
\end{align*}
such that 
\begin{align*}
    &\psi(x,0)=x, \psi(x,1)=a,\\
    &\psi(r_a(\tau),\zeta)=r_a(\tau+(1-\tau)\zeta),\\
    &\psi(r_\add(0),\zeta), \psi(r_\add(1),\zeta)\notin\mathscr{A}_t(W_{\Sigma,j}) \text{ for } \zeta\neq1.
\end{align*}
Let us set $z(\zeta)=e^{\sqrt{-1}\arg(z)}t^{\psi(\Log_t(z),\zeta)}$, then $H^{1,\text{part}2}:H^{1,\text{part}1}(\Phi_j(\Gamma_{a,\total}),1)\times[0,1]\rightarrow Y_{j,\simple}^t$ is given as 
\begin{equation*}
    H^{1,\text{part}2}((u,v,z),\zeta)=(e^{\sqrt{-1}\arg(u)}\sqrt{|W_{\Sigma,j}(z(\zeta))|},e^{\sqrt{-1}\arg(v)}\sqrt{|W_{\Sigma,j}(z(\zeta))|},z(\zeta)).
\end{equation*}
\begin{figure}[htbp!]
    \centering
    \hspace*{-2cm}\includegraphics[scale=0.6]{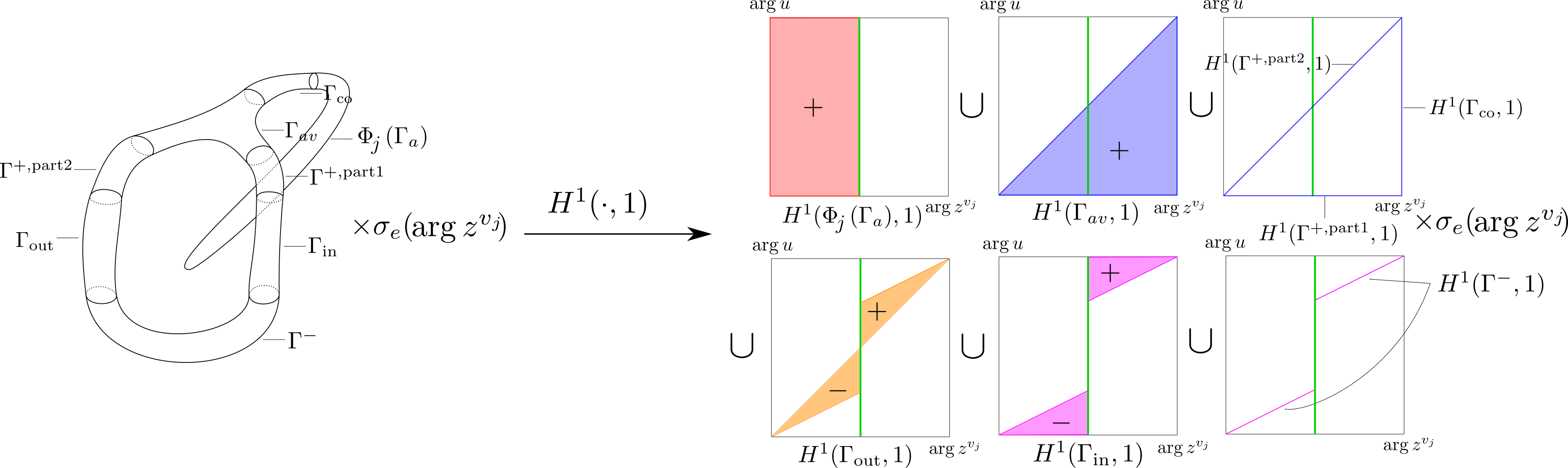}
    \caption{Homotopy to the singular fiber}
    \label{fig_homotopy_to_the_singular_fiber}
\end{figure}
Figure \ref{fig_homotopy_to_the_singular_fiber} shows the process of $H^1$, with $\sigma_e(\arg z^{v_j})=\sigma_e(e^{\sqrt{-1}\arg(z^{v_j})v_j}\cdot s_{\real>0}(\Log_t(z)))$. Its right side shows the pieces of the singular fiber $SF_{a,e}$ to which the pieces of $\Gamma_{a,\total}$ are homotopic to. The green part is where $u=v=0$ and the signs on the regions indicate the orientation of the pieces.\\
Now for $H^2$, we choose a path
\begin{align*}
    r_{SF}:[0,1]\rightarrow & U_j\cap F_{j,\trop}\\
    \zeta \mapsto & r_{SF}(\zeta)
\end{align*}
such that $r_{SF}(0)=a$ and $r_{SF}(1)=\tilde{a}$. Let us set $z_{SF}(\zeta)=\exp\big(\log t (r_{SF}(\zeta)+\tau_{r_{SF}(\zeta)}(\eta(z)) v_j)+\sqrt{-1}((\alpha_{r_{SF}(\zeta)}(\eta(z))+\eta_a(z))\gamma_a+\sum_{i=1}^{n-1}\eta_i(z)\xi_a^{\perp,i})\big)$,
where $\eta(z)=(\eta_1(z),\eta_2(z),\dots,\eta_{n-1}(z))$ and $\eta_a(z)$ are determined through $z=\exp\big(\log t (a+\tau_a(\eta) v_j)+\sqrt{-1}((\alpha_a(\eta)+\eta_a)\gamma_a+\sum_{i=1}^{n-1}\eta_i\xi_a^{\perp,i})\big)$. Then $H^2:SF_{a,e}\times[0,1]\rightarrow Y_{j,\simple}^t$ are given as 
\begin{equation*}
    H^{2}((u,v,z),\zeta)=(e^{\sqrt{-1}\arg(u)}\sqrt{|W_{\Sigma,j}(z_{SF}(\zeta))|},e^{\sqrt{-1}\arg(v)}\sqrt{|W_{\Sigma,j}(z_{SF}(\zeta))|},z_{SF}(\zeta)).
\end{equation*}

\begin{figure}[htbp!]
    \centering
    \includegraphics[scale=0.5]{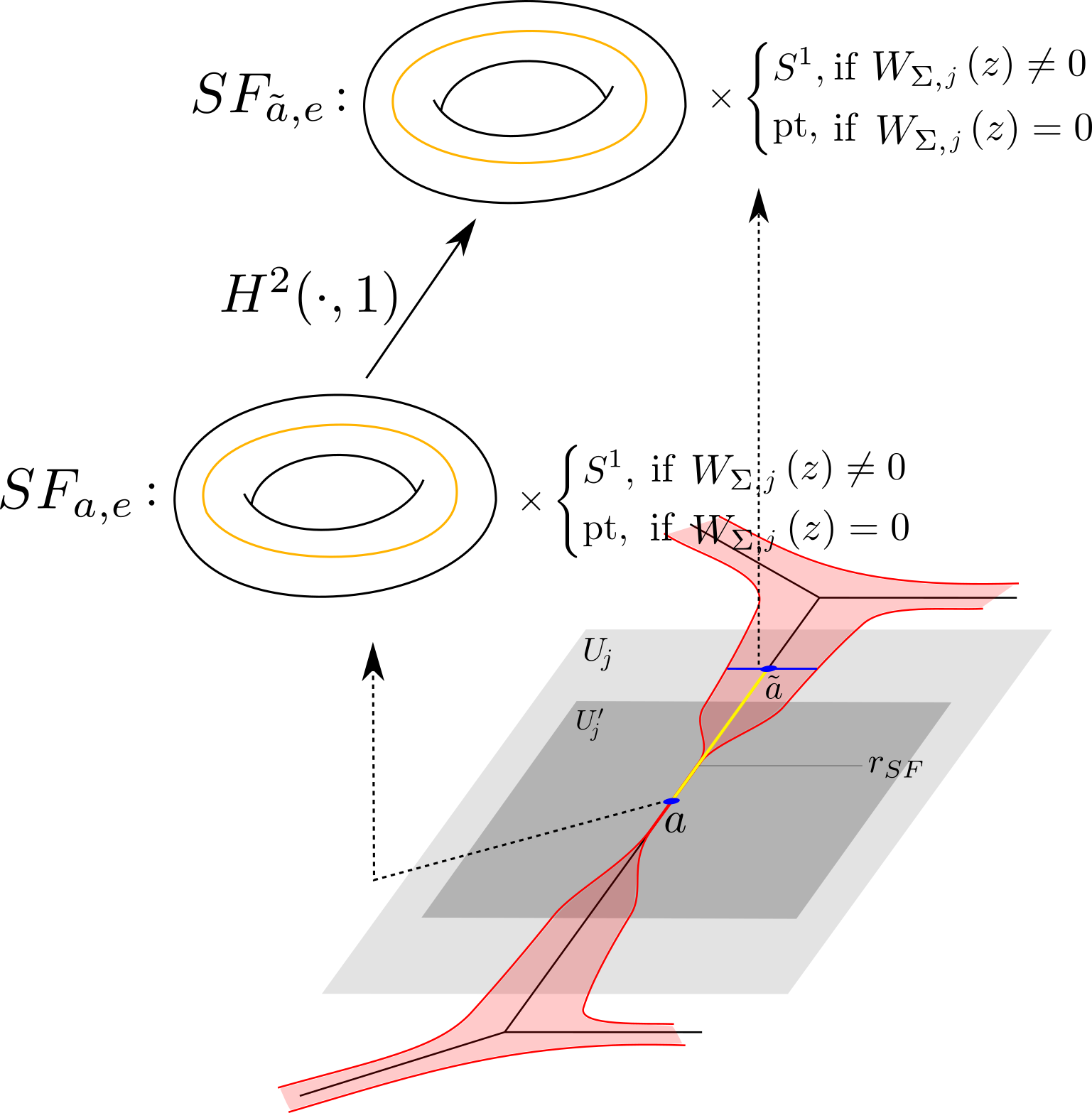}
    \caption{Homotopy between the singular fibers}
    \label{fig_Homotopy_between_the_singular_fibers}
\end{figure}

Figure \ref{fig_Homotopy_between_the_singular_fibers} shows the process of $H^2$.
\end{proof}

Now combining Proposition \ref{prop_homotopy_to_singular_fiber}, Proposition \ref{prop_integral_on_singularfiber} and (\ref{eq_integral_on_Gamma_add}), we have that
\begin{equation}
\label{eq_integral_on_Gamma_a}
    \begin{aligned}
    &\int_{\Gamma_a}\Omega\\
    =&\int_{\Gamma_{\text{total}}}\Omega-\int_{\Gamma_{\add}}\Omega\\ 
    =&\int_{SF_{\tilde{a},e}}\Omega-\int_{\Gamma_{\add}}\Omega\\ 
    =&-(2\pi\sqrt{-1})^nw(e)\big(\langle\log t\cdot r_{\add}(\frac{1}{2}),v_j\rangle+\log t^{\lambda_j}\big)+\frac{1}{2}(2\pi\sqrt{-1})^{n+1}w(e)\\ 
    &+(\sqrt{-1})^{n}\big(\int_{[\tilde{\sigma}_a(0)]}\log(W_{\Sigma}(z))d\theta_1d\theta_2\cdots d\theta_n-\int_{[\tilde{\sigma}_a(1)]}\log((t^{\lambda_j}z^{v_j})^{-1}W_{\Sigma}(z))d\theta_1d\theta_2\cdots d\theta_n\big)
\end{aligned}
\end{equation}
for an end $a\in F_{j,\trop}$.

We can now give the proof of Theorem \ref{thm_maintheorem_2}. We need the following lemma.
\begin{lemma}
\label{lemma_periodevaluation2}
Suppose $m(z_1,z_2,\dots,z_n)\in\C[z_1,z_2,\dots,z_n]$ is a monomial which is not a constant, then for any $(r_1,r_2,\cdots,r_n)\in\R^n$, we have that
$$
    \int_{[0,2\pi]^n}m(e^{r_1+\sqrt{-1}\theta_1},e^{r_1+\sqrt{-1}\theta_1},\dots,e^{r_n+\sqrt{-1}\theta_n})d\theta_1d\theta_2\cdots d\theta_n=0.
$$
\end{lemma}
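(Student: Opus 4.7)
The plan is to write a monomial $m(z_1,\dots,z_n) = c\, z_1^{a_1}\cdots z_n^{a_n}$ with $c \in \C$ and integers $a_1,\dots,a_n$ not all zero (using the hypothesis that $m$ is not a constant), then substitute $z_i = e^{r_i+\sqrt{-1}\theta_i}$ and factor the result into radial and angular pieces. The radial part $e^{\sum_i a_i r_i}$ pulls out of the integral as a constant, leaving
$$
    c\, e^{\sum_i a_i r_i}\int_{[0,2\pi]^n} e^{\sqrt{-1}(a_1\theta_1+\cdots+a_n\theta_n)}\,d\theta_1\cdots d\theta_n.
$$

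The remaining angular integral factors as a product $\prod_{i=1}^n \int_0^{2\pi} e^{\sqrt{-1}\,a_i\theta_i}\,d\theta_i$ by Fubini. Since $m$ is not constant, at least one exponent, say $a_{i_0}$, is nonzero; for that index the corresponding factor evaluates to zero by the standard computation $\int_0^{2\pi} e^{\sqrt{-1}\,a_{i_0}\theta_{i_0}}\,d\theta_{i_0} = 0$. Hence the entire product vanishes, which gives the claim. There is no real obstacle here; this is a routine orthogonality-of-characters computation on the torus $T^n$, whose only purpose is to be cited in the subsequent period evaluations (for instance, to discard the non-constant monomials appearing inside $\log(W_\Sigma(z))$ and $\log((t^{\lambda_j}z^{v_j})^{-1}W_\Sigma(z))$ in \eqref{eq_integral_on_Gamma_a} after a suitable power-series expansion).
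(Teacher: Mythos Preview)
Your argument is correct and is exactly the approach taken in the paper: the paper's one-line proof simply invokes the identity $\int_{[0,2\pi]}e^{r+\sqrt{-1}\theta}\,d\theta=0$, and you have unpacked this into the full factorization via Fubini with one vanishing angular factor.
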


\begin{proof}
    It follows from the fact that $\int_{[0,2\pi]}e^{r+\sqrt{-1}\theta}d\theta=0$ for any $r\in\R$.
\end{proof}

\begin{proof}[Proof of Theorem \ref{thm_maintheorem_2}]
    Summing up the integrals of $\Omega$ on all the pieces (\ref{eq_integral_on_Gamma_e}), (\ref{eq_integral_on_Gamma_v}) and (\ref{eq_integral_on_Gamma_a}), we have that 
    \begin{align*}
        \int_{L_{\trop,C}^t}\Omega=&(\sqrt{-1})^n\sum\limits_{j=1}^{p}\big(-(2\pi)^nE_j\log t^{\lambda_j}+\sum\limits_{a\in F_{j,\trop}}(\int_{[\tilde{\sigma}_a(0)]}\log( W_{\Sigma}(z))d\theta_1d\theta_2\cdots d\theta_n\\
        &-\int_{[\tilde{\sigma}_a(1)]}\log((t^{\lambda_j}z^{v_j})^{-1}W_{\Sigma}(z))d\theta_1d\theta_2\cdots d\theta_n)\big)+\frac{1}{2}(2\pi\sqrt{-1})^{n+1}(\sum\limits_{j=1}^pE_j+V),
    \end{align*}
    where $V=\sum\limits_{v\in\{\text{vertices of }\beta_{\trop,C}\}}\vol(\sigma_v)$ and $E_j=\sum\limits_{\sigma_e\subset F_j}\vol_{\text{int}}(\sigma_e)$. Replace $(t^{\lambda_1},t^{\lambda_2},\dots,t^{\lambda_p})$ back to $(1,1,\dots,1,t_1,t_2,\dots,t_{p-n})$ and compare it with (\ref{eq_result2_centralcharge_chernclass}), we see that it is sufficient to show that
    \begin{equation}
    \label{eq_main_thm2_formula_need_to_show}
    \begin{aligned}
        &\sum\limits_{j=1}^{p}\big((2\pi)^nE_j\log t_{j-n}\\
        &+\sum\limits_{a\in F_{j,\trop}}(-\int_{[\tilde{\sigma}_a(0)]}\log(W_{\Sigma}(z))d\theta_1d\theta_2\cdots d\theta_n+\int_{[\tilde{\sigma}_a(1)]}\log((t_{j-n}z^{v_j})^{-1}W_{\Sigma}(z))d\theta_1d\theta_2\cdots d\theta_n)\big)\\
        =&\sum_{s=1}^{p-n}(2\pi)^{n}E_{n+s}\log \check{t}_s(t),
    \end{aligned}
    \end{equation}
    where we set $t_{j-n}=1$ for $j=1,2,\dots,n$. \\
    For the left hand side of (\ref{eq_main_thm2_formula_need_to_show}), using the parametrizations
    \begin{align*}
        &\tilde{\sigma}_a(0)=\{\exp(\log t r_{\add}(0)+\sqrt{-1}(\eta_a\gamma_a+\sum\limits_{i=1}^{n-1}\eta_i\xi_a^{\perp,i})))\ |\ (\eta_a,\eta_1,\cdots,\eta_{n-1})\in[0,2\pi]^n\}\\
        &\tilde{\sigma}_a(1)=\{\exp(\log t r_{\add}(1)+\sqrt{-1}(\eta_a\gamma_a+\sum\limits_{i=1}^{n-1}\eta_i\xi_a^{\perp,i})))\ |\ (\eta_a,\eta_1,\cdots,\eta_{n-1})\in[0,2\pi]^n\}
    \end{align*}
    for an end $a$ on $F_{j,\trop}$ with $e$ the edge having $a$ as the end, we have that
    \begin{align*}
        &\int_{[\tilde{\sigma}_a(0)]}\log(W_{\Sigma}(z))d\theta_1d\theta_2\cdots d\theta_n-\int_{[\tilde{\sigma}_a(1)]}\log((t_{j-n}z^{v_j})^{-1}W_{\Sigma}(z))d\theta_1d\theta_2\cdots d\theta_n\\
        =&w(e)\int_{(\eta_a,\eta_1,\cdots,\eta_{n-1})\in[0,2\pi]^n}\log\big(W_{\Sigma}(\exp(\log t r_{\add}(0)+\sqrt{-1}(\eta_a\gamma_a+\sum\limits_{i=1}^{n-1}\eta_i\xi_a^{\perp,i})))\big)\\
        &-\log\big((t_{j-n}z^{v_j})^{-1}W_{\Sigma}(\exp(\log t r_{\add}(0)+\sqrt{-1}(\eta_a\gamma_a+\sum\limits_{i=1}^{n-1}\eta_i\xi_a^{\perp,i})))\big)d\eta_ad\eta_1 \cdots d\eta_{n-1}\\
        =&w(e)(2\pi)^n\big(C_{j,1}(t)-C_{j,2}(t)\big)
    \end{align*}
    according to Lemma \ref{lemma_periodevaluation2}, where $C_{j,1}(t)$ and $C_{j,2}(t)$ are the constant terms of $\log(W_{\Sigma}(z))$ and $\log((t_{j-n}z^{v_j})^{-1}W_{\Sigma}(z))$ when we regard them as power series in $z$. Now recall that $W_{\Sigma}(z)$ is given as
    $$
    W_{\Sigma}(z)=1+\sum_{i=1}^{n}z_i+\sum_{s=1}^{p-n}t_{s}z^{v_{n+s}},
    $$
    and thus 
    \begin{align*}
        &\log(W_{\Sigma}(z))=\sum_{k=1}^{\infty}\frac{(-1)^{k+1}}{k}(\sum_{i=1}^{n}z_i+\sum_{s=1}^{p-n}t_{s}z^{v_{n+s}})^k,\\
        &\log(W_{\Sigma}(z))=\sum_{k=1}^{\infty}\frac{(-1)^{k+1}}{k}(t_{j-n}^{-1}z^{-v_j}(1+\sum_{i=1}^{n}z_i+\sum_{s=1}^{p-n}t_{s}z^{v_{n+s}})-1)^k.
    \end{align*}
    So $C_{j,2}(t)=0$ according to Corollary \ref{cor_Fanocor1}. For $C_{j,1}(t)$, note that it is contributed from the term whenever $k=\sum\limits_{i=1}^{n}A_i+\sum\limits_{s=1}^{p-n}m_s$, with $A_i=\sum\limits_{s=1}^{p-n}-v_{n+s,i}m_s$ and all $A_i$'s and $m_s$'s are nonnegative. As a result,
    \begin{equation*}
    \begin{aligned}
        &C_{j,1}(t)\\
        =&\sum\limits_{A_j\geq0,m_s\geq 0} \frac{{(-1)}^{\sum\limits_{i=1}^{n}A_i+\sum\limits_{s=1}^{p-n}m_s+1}}{\sum\limits_{i=1}^{n}A_i+\sum\limits_{s=1}^{p-n}m_s}\binom{\sum\limits_{i=1}^{n}A_i+\sum\limits_{s=1}^{p-n}m_s}{A_1}\binom{\sum\limits_{i=1}^{n}A_i+\sum\limits_{s=1}^{p-n}m_s-A_1}{A_2}\cdots\binom{A_n+\sum\limits_{s=1}^{p-n}m_s}{A_n}\\
        &\binom{\sum\limits_{s=1}^{p-n}m_s}{m_1}\binom{\sum\limits_{s=1}^{p-n}m_s-m_1}{m_2}\cdots\binom{m_{p-n}}{m_{p-n}}t_1^{m_1}\cdots t_{p-n}^{m_{p-n}}\\
        =&\sum_{\mbox{\tiny$\begin{array}{cc}
        &\exists m_s>0,  \\
        &\sum\limits_{s=1}^{p-n}-v_{n+s,i}m_s\geq0,\forall 1\leq i\leq n 
        \end{array}$}}[(-1)^{1+\sum\limits_{s=1}^{p-n}(\sum\limits_{i=1}^nv_{n+s,i}-1)m_s}\\
        &(-1-\sum\limits_{s=1}^{p-n}(\sum\limits_{i=1}^nv_{n+s,i}-1)m_s)!
        \prod_{i=1}^n\frac{1}{(-\sum\limits_{s=1}^{p-n}v_{n+s,i}m_s)!}\prod_{s=1}^{p-n}\frac{1}{m_s!}t_1^{m_1}t_2^{m_2}\cdots t_{p-n}^{m_{p-n}}].
    \end{aligned}
    \end{equation*}
    Note that it is independent of $j$ and we denote it by $C_1(t)$.
    Thus
    \begin{equation}
    \begin{aligned}
        &\sum\limits_{j=1}^{p}\big((2\pi)^nE_j\log t_{j-n}\\
        &+\sum\limits_{a\in F_{j,\trop}}(-\int_{[\tilde{\sigma}_a(0)]}\log(W_{\Sigma}(z))d\theta_1d\theta_2\cdots d\theta_n+\int_{[\tilde{\sigma}_a(1)]}\log((t_{j-n}z^{v_j})^{-1}W_{\Sigma}(z))d\theta_1d\theta_2\cdots d\theta_n)\big)\\
        =&\sum\limits_{j=1}^pE_j((2\pi)^n\log t_{j-n}-C_1(t)).
    \end{aligned}
    \label{eq_LHS_simplified}
    \end{equation}
    
    For the right hand side of (\ref{eq_main_thm2_formula_need_to_show}), we have that
    \begin{align*}
        \log\check{t}_s(t)=&\frac{\partial w(t,\rho)}{\partial\rho_s}|_{\rho=0}\\
        =&\big(\log t_s-\Gamma'(1)\sum_{j=0}^pl^{(s)}_j+\\
        &\sum_{\mbox{\tiny$\begin{array}{cc}
        &\exists m_k>0,  \\
        \nonumber&\sum\limits_{k=1}^{p-n}l^{(k)}_im_k\geq0,\forall 1\leq i\leq n 
        \end{array}$}}[-\frac{l_0^{(s)}\Gamma'(1+\sum\limits_{k=1}^{p-n}l_0^{(k)}m_k)}{\Gamma^2(1+\sum\limits_{k=1}^{p-n}l_0^{(k)}m_k)}\prod_{j=1}^p\frac{1}{\Gamma(1+\sum\limits_{k=1}^{p-n}l^{(k)}_jm_k)}t_1^{m_1}t_2^{m_2}\cdots t_{p-n}^{m_{p-n}}]\big)\\
        =&\big(\log t_s+\\
        &(\sum\limits_{i=1}^nv_{n+s,i}-1)\sum_{\mbox{\tiny$\begin{array}{cc}
        &\exists m_k>0,  \\
        \nonumber&\sum\limits_{k=1}^{p-n}l^{(k)}_im_k\geq0,\forall 1\leq i\leq n 
        \end{array}$}}[(-1)^{1+\sum\limits_{k=1}^{p-n}(\sum\limits_{i=1}^nv_{n+k,i}-1)m_k}(-1-\sum\limits_{k=1}^{p-n}(\sum\limits_{i=1}^nv_{n+k,i}-1)m_k)!\\
        &\prod_{i=1}^n\frac{1}{(-\sum\limits_{k=1}^{p-n}v_{n+k,i}m_k)!}\prod_{k=1}^{p-n}\frac{1}{m_k!}t_1^{m_1}t_2^{m_2}\cdots t_{p-n}^{m_{p-n}}]\big).
    \end{align*}
    The first equation is because that $\sum_{i=1}^{n}v_{n+s,i}-1<0$ according to Corollary \ref{cor_Fanocor2}. The second equation is because that $\sum\limits_{j=0}^pl_j^{(s)}=0$ and $\frac{\Gamma'(b)}{\Gamma^2(b)}=(-1)^{b+1}(-b)!$ for any $b\in \mathbb{Z}_{\leq0}$. Note that the term 
    \begin{equation*}
        \begin{aligned}
        P(t):=&\sum_{\mbox{\tiny$\begin{array}{cc}
        &\exists m_k>0,  \\
        &\sum\limits_{k=1}^{p-n}l^{(k)}_im_k\geq0,\forall 1\leq i\leq n 
        \end{array}$}}[(-1)^{1+\sum\limits_{k=1}^{p-n}(\sum\limits_{i=1}^nv_{n+k,i}-1)m_k}(-1-\sum\limits_{k=1}^{p-n}(\sum\limits_{i=1}^nv_{n+k,i}-1)m_k)!\\
        &\prod_{i=1}^n\frac{1}{(-\sum\limits_{k=1}^{p-n}v_{n+k,i}m_k)!}\prod_{k=1}^{p-n}\frac{1}{m_k!}t_1^{m_1}t_2^{m_2}\cdots t_{p-n}^{m_{p-n}}]
        \end{aligned}
    \end{equation*}
    is independent of the choice of $s\in\{1,2,\dots,p-n\}$ and is equal to $C_1(t)$. So we have that
    \begin{equation}
        \log\check{t}_s(t)=\log t_s+(\sum\limits_{i=1}^nv_{n+s,i}-1)P(t)=\log t_s+(\sum\limits_{i=1}^nv_{n+s,i}-1)C_1(t).
    \label{eq_mir_map_explicit_form}
    \end{equation}

    Thus
    \begin{equation}
        \sum_{s=1}^{p-n}(2\pi)^{n}E_{n+s}\log \check{t}_s(t)=\sum_{s=1}^{p-n}(2\pi)^{n}E_{n+s}(\log t_s+(\sum\limits_{i=1}^nv_{n+s,i}-1)C_1(t)).
    \label{eq_RHS_simplified}
    \end{equation}
    
    Compare it to (\ref{eq_LHS_simplified}) and what is left to check is 
    $$
        -\sum\limits_{j=1}^pE_j=\sum\limits_{s=1}^{p-n}E_{n+s}(\sum\limits_{i=1}^nv_{n+s,i}-1).
    $$
    Note that $E_j=[D_j][Q_1][Q_2]\cdots[Q_{n-1}]$, so it is sufficient to check that
    $$
        -\sum\limits_{j=1}^p[D_j]=\sum\limits_{s=1}^{p-n}[D_{n+s}](\sum\limits_{i=1}^nv_{n+s,i}-1),
    $$
    which is true since 
    \begin{equation*}
        \sum\limits_{s=1}^{p-n}\sum\limits_{i=1}^nv_{n+s,i}[D_{n+s}]=\sum\limits_{k=1}^n\sum\limits_{i=1}^nv_{k,i}[D_k]=\sum_{k=1}^n[D_k].
    \end{equation*}
\end{proof}
Thus we show that the central charges $Z_t(E_C)$ and $C_t(L_{\trop,C}^t)$ are equal to each other.

\section{Relation to the Gross-Siebert Model of Local Mirror Symmetry}
\label{section4}
The same method of the evaluation of central charges through tropical geometry can be applied to the Gross-Siebert model of local mirror symmetry as in \cite{Gross-Siebert_2014}. The model is given as follows. The canonical bundle $X$ can be put into a one higher dimensional toric variety, and there is a toric degeneration $\mathcal{X}$ within it from $X$ to the toric boundary. Then the dual intersection complex $(B,\mathscr{P},\Delta)$ corresponding to $\mathcal{X}$ and a proper choice of multivalued piecewise linear function give rise to a dual toric degeneration $\mathcal{Y}$, which is regarded as the Gross-Siebert mirror of $X$. We do not give the details of the construction, but focus on a family of open subsets of $\mathcal{Y}$, which is given as 
\begin{equation}
\label{eq_GS_model}
    Y_{GS}^{\check{t}}=\{(\check{u},\check{v},\check{z}_1,\check{z}_2,\dots,\check{z}_n )\in \mathbb{C}^2\times(\mathbb{C}^*)^n\ |\ \check{u}\check{v}=W_{\Sigma,GS}(\check{z})\},
\end{equation}
with $W_{\Sigma,GS}(\check{z})=1+\sum_{i=1}^n\check{z}_i+\sum_{s=1}^{p-n}\check{t}_s\check{z}^{v_{n+s}}+h(\check{t})$, where $h(\check{t})\in\C\llbracket \check{t} \rrbracket $ is to guarantee that $W_{\Sigma,GS}$ satisfies the normalization condition, i.e., $\log(W_{\Sigma,GS}(\check{z}))$ has no $(\check{t})^m$-terms for $m\neq0$ when we regard it as a power series in $\check{z}$. We will later show that $h(\check{t})$ is analytic in a neighbourhood of $\check{t}=0$. Thus we can apply the method in Section \ref{section3.3} to get a piecewise Lagrangian closed submanifold $L_{\trop,C}^{\check{t}}$ in $Y_{GS}^{\check{t}}$ from $\beta_{\trop,C}$. The evaluation of the central charge $C_{\check{t}}(L_{\trop,C}^{\check{t}})$ is similar to $C_t(L_{\trop,C}^t)$, with the only difference that 
$$
    \int_{[\tilde{\sigma}_a(0)]}\log(W_{\Sigma,GS}(\check{z}))d\theta_1\dots d\theta_n=0.
$$
Thus we get Theorem \ref{thm_mainthm_GS} proved.

Note that the Gross-Siebert model (\ref{eq_GS_model}) is a modification of the Hori-Vafa mirror $Y^t$ (\ref{eq_local_mirror_symmetry}) with the correction term $h(\check{t})$. It turns out that these two models are equivalent under the mirror map
\begin{equation}
\label{eq_mirror_map}
    \check{t}_s(t)=\exp(\frac{\partial}{\partial\rho_s}|_{\rho=0}w(t,\rho)),
\end{equation}
where $w(t,\rho)$ is the formal solution to the Picard-Fuchs equation (\ref{eq_solution_to_P-F}).

\begin{proof}[Proof of Theorem \ref{thm_central_charge_match_mirror_map}]
Note that according to (\ref{eq_mir_map_explicit_form}), we have that 
$$
    \check{t}_s(t)=e^{(\sum\limits_{i=1}^nv_{n+s,i}-1)P(t)}t_s.
$$
So 
\begin{align}
\label{eq_two_model_equiv}
    &1+\sum_{i=1}^n \check{z}_i+\sum_{s=1}^{p-n}\check{t}_s(t)\check{z}^{v_{n+s}}+h(\check{t}(t))\\ \nonumber
    =&1+h(\check{t}(t))+\sum_{i=1}^n \check{z}_i+\sum_{s=1}^{p-n}e^{-P(t)}t_s(e^{P(t)}\check{z})^{v_{n+s}}\\ \nonumber
    =&e^{-P(t)}\big((1+h(\check{t}(t)))e^{P(t)}+\sum_{i=1}^n e^{P(t)}\check{z}_i+\sum_{s=1}^{p-n}t_s(e^{P(t)}\check{z})^{v_{n+s}}\big). \nonumber
\end{align}
Taking logarithm on both sides of (\ref{eq_two_model_equiv}) and compare their constant terms, we have that
$$
    0=-P(t)+\check{C}(t),
$$
where $\check{C}(t)$ is the constant term of $\log\big((1+h(\check{t}(t)))e^{P(t)}+\sum_{i=1}^n e^{P(t)}\check{z}_i+\sum_{s=1}^{p-n}t_s(e^{P(t)}\check{z})^{v_{n+s}}\big)$ when we expand it with respect to $\check{z}$. Then we have that $(1+h(\check{t}(t)))e^{P(t)}=1$ since the constant term of $\log\big(1+\sum_{i=1}^n e^{P(t)}\check{z}_i+\sum_{s=1}^{p-n}t_s(e^{P(t)}\check{z})^{v_{n+s}}\big)$ is equal to the constant term of $\log(1+W_{\Sigma}(z))$, which is $P(t)$. Furthermore, since $P(t)$ is analytic in a neighbourhood of $t=0$ and the inverse mirror map $t(\check{t})$ is analytic in a neighbourhood of $\check{t}=0$, we have that $h(\check{t})=e^{-P(t(\check{t}))}-1$ is analytic in a neighbourhood of $\check{t}=0$. Thus $Y^{\check{t}}_{GS}$ is a family of analytic submanifolds. Now, if we set
$$
    z_i=e^{P(t)}\check{z}_i,
$$
we have that 
$$
W_{\Sigma,GS}(\check{t}(t),\check{z})=e^{-P(t)}W_{\Sigma}(t,z).
$$ Thus the diffeomorphism $\Psi_{GS\rightarrow HV}:Y^{\check{t}}_{GS}\rightarrow Y^t$ is given by 
$$
\Psi_{GS\rightarrow HV}(\check{u},\check{v},\check{z}_1,\cdots,\check{z}_n)=(e^{-\frac{1}{2}P(t)}\check{u},e^{-\frac{1}{2}P(t)}\check{v},e^{P(t)}\check{z}_1,e^{P(t)}\check{z}_2,\cdots,e^{P(t)}\check{z}_n).
$$ Then $\Psi_{GS\rightarrow HV}^*(\Omega)=d\log\check{u}d\log\check{z}_1\dots d\log\check{z}_n$ and we have that 
$$
    \int_{\Psi_{GS\rightarrow HV}^{-1}(L_{\beta_{\trop,C}}^t)}d\log\check{u}d\log\check{z}_1\dots d\log\check{z}_n=C_t(L_{\beta_{\trop,C}}^t)(t).
$$ It is also equal to $C_{\check{t}}(L_{\beta_{\trop,C}}^{\check{t}})(\check{t}(t))$ since $\log(\check{t}_s(t))=\log t_s+(\sum\limits_{i=1}^nv_{n+s,i}-1)P(t)$.
\end{proof}

\begin{remark}
\label{remark_instanton_open_GW}
\begin{itemize}
    \item The term $h(\check{t})$ can be interpreted using the open Gromov-Witten invariants of $X$ as in \cite{C-L-L_2012}. It is given by
    $$
        h(\check{t})=\sum_{\alpha\neq0}n_{\beta_0+\alpha}\exp(-\int_{\alpha}\sum_{s=1}^{p-n}(\log \check{t}_{s})[\tilde{D}_{n+s}]),
    $$
    where the sum is over $\alpha\in H_2(X,\Z)\backslash 0$ which can be represented by rational curves, $\beta_0\in H_2(X,L)$ corresponds to $\tilde{v}_0$ as in \cite{Cho-Oh_2006}, and $n_{\beta_0+\alpha}$ is the open Gromov-Witten invariants counting the number of pseudo-holomorphic disks in class $\beta_0+\alpha$. They use this to give the formula of the inverse mirror map $t(\check{t})$. These open Gromov-Witten invariants are also related to closed Gromov-Witten invariants when $X_\Sigma$ is a smooth Fano toric surface, as mentioned in Section 2 of \cite{G-R-Z_2022}. In \cite{Gross-Siebert_2014}, Gross and Siebert interpret $h(\check{t})$ as the counting of tropical trees.
    \item The equivalence of the two models of local mirror symmetry is also discussed in \cite{Lau_2014}.
\end{itemize}
\end{remark}

\appendix
\section{Proof of Lemma \ref{lemma_defomation_near_amoeba}}
\label{appx_proof_of_defomation_lemmma}
To prove Lemma \ref{lemma_defomation_near_amoeba}, we need the following lemmas.

\begin{lemma}
\label{lemma_appxlemma1}
Suppose $Y_\mu,\mu\in[0,1]$ is a smooth family of symplectic submanifolds in $(\C^2\times(\C^*)^n,\omega_0)$ and $Y_\mu$ is independent of $\mu$ away from a compact set of $\C^2\times(\C^*)^n$, then there exists a symplectomorphism $\Phi:Y_0\rightarrow Y_1$.
\end{lemma}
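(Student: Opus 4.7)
The plan is a Moser-type argument, with a little extra care to ensure the needed primitives are compactly supported. First, since $Y_\mu$ agrees with $Y_0$ outside a fixed compact set and depends smoothly on $\mu$, the isotopy extension theorem yields a smooth compactly supported ambient isotopy $\Phi_\mu : \C^2\times(\C^*)^n \to \C^2\times(\C^*)^n$ with $\Phi_0=\id$ and $\Phi_\mu(Y_0)=Y_\mu$; let $V_\mu$ be its generating (compactly supported) time-dependent vector field.

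Since $\omega_0$ is closed, Cartan's formula gives
\begin{equation*}
    \Phi_\mu^*\omega_0 - \omega_0 = d\beta_\mu, \qquad \beta_\mu := \int_0^\mu \Phi_s^*(\iota_{V_s}\omega_0)\, ds,
\end{equation*}
with $\beta_\mu$ compactly supported on the ambient manifold because $V_s$ is. Setting $\omega_\mu' := (\Phi_\mu|_{Y_0})^*(\omega_0|_{Y_\mu})$ then produces a smooth family of symplectic forms on $Y_0$ satisfying $\omega_\mu' - \omega_0|_{Y_0} = d(\beta_\mu|_{Y_0})$ and hence $\dot\omega_\mu' = d\dot\alpha_\mu$, where $\dot\alpha_\mu := \dot\beta_\mu|_{Y_0}$ has compact support in $Y_0$.

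Now one applies Moser's trick on $Y_0$: define a compactly supported time-dependent vector field $X_\mu$ on $Y_0$ by $\iota_{X_\mu}\omega_\mu' = -\dot\alpha_\mu$ (possible by non-degeneracy of $\omega_\mu'$). Its flow $\phi_\mu$ exists for all $\mu\in[0,1]$ by compact support of $X_\mu$, and the standard computation gives $\phi_\mu^*\omega_\mu' = \omega_0|_{Y_0}$. Then $\Phi := \Phi_1 \circ \phi_1 : Y_0 \to Y_1$ satisfies $\Phi^*(\omega_0|_{Y_1}) = \omega_0|_{Y_0}$, which is the desired symplectomorphism. The main technical point is the very first step: one uses that a smooth family of submanifolds is locally trivializable in a tubular neighbourhood of $Y_0$ (via the implicit function theorem), and the $\mu$-independence of $Y_\mu$ outside a compact set allows these local trivializations to be patched by a partition of unity on $[0,1]$ into a genuinely compactly supported ambient isotopy.
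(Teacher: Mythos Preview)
Your argument is correct and follows the same two-step plan as the paper: trivialize the family by diffeomorphisms, then run Moser on $Y_0$. The implementation differs in one respect. The paper trivializes intrinsically, by forming the total space $\tilde Y\subset(\C^2\times(\C^*)^n)\times[0,1]$, applying Ehresmann's lemma to the projection $\tilde Y\to[0,1]$ over a compact piece, and extending by the identity; it then only records that the two pulled-back symplectic forms on $Y_0$ are cohomologous and agree off a compact set before invoking Moser. You instead extend to a compactly supported \emph{ambient} isotopy $\Phi_\mu$ of $\C^2\times(\C^*)^n$; the payoff is that Cartan's formula applied to the compactly supported generating field $V_\mu$ hands you a compactly supported primitive $\beta_\mu$ on the nose, so the compactly supported Moser step is fully justified without any further discussion of whether the exact difference admits a compactly supported primitive. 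Both routes are standard; yours is slightly more explicit about the noncompact Moser step.
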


\begin{proof}
Consider the submanifold
$$
    \tilde{Y}:=\{(u,v,z_1,\cdots,z_n,\mu)\in\C^2\times(\C^*)^n\times[0,1]\ |\ (u,v,z_1,\cdots,z_n)\in Y_\mu\}
$$
and let $p:\tilde{Y}\rightarrow[0,1]$ be its projection to the component $[0,1]$. The smoothness of $Y_{\mu}$ guarantees that $p$ is a submersion. Now let us choose a compact subset $K$ of $\C^2\times(\C^*)^n$ such that $Y_\mu$ are independent of $\mu$ on $K^c$ and $Y_\mu$ intersect $\partial K$ transversely, then we can apply Ehresmann's lemma to $p|_{\tilde{Y}\cap (K\times[0,1])}$ and get a diffeomorphism
$$
    i_K:\tilde{Y}\cap (K\times[0,1])\rightarrow(Y_0\cap K)\times[0,1],
$$
such that $i_K|_{(Y_\mu\cap K)\times\{\mu\}}:(Y_\mu\cap K)\times\{\mu\}\rightarrow (Y_0\cap K)\times\{\mu\}$ is a diffeomorphism and $i_K|_{(Y_\mu\cap\partial K)\times\{\mu\}}$ is an identity for any $\mu\in[0,1]$. Thus we get a trivialization of $\tilde{Y}$ by extending $\id_{Y_\mu\cap K^c\times[0,1]}$ with $i_K$, i.e., a diffeormorphism
$$
    i: \tilde{Y}\rightarrow Y_0\times[0,1]
$$
such that $i|_{Y_\mu\times\{\mu\}}:Y_\mu\times\{\mu\}\rightarrow Y_0\times\{\mu\}$ is a diffeomorphism for any $\mu\in[0,1]$. We write $i|_{Y_\mu\times\{\mu\}}$ as a diffeomorphism from $Y_\mu$ to $Y_0$ and write it as $\varphi_\mu:Y_\mu\rightarrow Y_0$. Now we know that $\varphi_\mu^*(\omega_0|_{Y_\mu})$ and $\omega_0|_{Y_0}$ have the same cohomology class in $H^2(Y_0,\C)$ by Poincare's Lemma. Furthermore, $\varphi_\mu^*(\omega_0|_{Y_\mu})$ is equal to $\omega_0|_{Y_0}$ away from a compact subset of $Y_0$, so we can apply Moser's argument to these two symplectic forms on $Y_0$ and get a symplectomorphism $\check{\varphi}_\mu:(Y_0,\varphi_\mu^*(\omega_0|_{Y_\mu}))\rightarrow (Y_0,\omega_0|_{Y_0})$. Then $\Phi:=\check{\varphi}_1\circ\varphi_1:Y_1\rightarrow Y_0$ is the syplectomorphism we want.
\end{proof}

Now we come to give a proof of Lemma \ref{lemma_defomation_near_amoeba}

\begin{proof}[Proof of Lemma \ref{lemma_defomation_near_amoeba}]
We need to construct a family of smooth symplectic submanifolds $Y_\mu$ in $\C^2\times(\C^*)^n$ which satisfies the conditions in Lemma \ref{lemma_appxlemma1} such that $Y_0=Y_j^t$ and $Y_1=Y^t_{j,\simple}$. We first choose $|t|$ small enough and an open set
$$
    V_j:=\{x\in\R^n\ |\ |\chi_j(x)|<b_j, \chi_k(x)>b_k>b_j\text{ for }k\neq j\},
$$
such that $V_j$ contains an open subset of $F_{j,\trop}$ and for any $\rho\in[0,1]$ and $x\in\mathscr{A}_t(W_{\Sigma}(z)-\rho\sum_{k\neq j}t^{\lambda_k}z^{v_k})\cap V_j\cap F_{j,\trop}$, $(x+\R\cdot v_j)\cap V_j$ is the connected component of $x$ in $(x+\R\cdot v_j)\cap \mathscr{A}_t(W_{\Sigma}(z)-\rho\sum_{k\neq j}t^{\lambda_k}z^{v_k})$. Then let $U_j''\subset\subset U_j'\subset\subset U_j$ be the open sets 
\begin{equation}
\label{eq_U_j}
\begin{aligned}
    &U_j:=\{x\in\R^n\ |\ |\chi_j(x)|<c_j, \chi_k(x)>c_k>c_j\text{ for }k\neq j\}\text{ with }c_j>b_j, c_k>b_k,\\
    &U_j':=\{x\in\R^n\ |\ |\chi_j(x)|<c_j', \chi_k(x)>c_k'>c_j'\text{ for }k\neq j\}\text{ with }b_j<c_j'<c_j, c_k'>c_k,\\
    &U_j'':=\{x\in\R^n\ |\ |\chi_j(x)|<c_j'', \chi_k(x)>c_k''>c_j''\text{ for }k\neq j\}\text{ with }b_j<c_j''<c_j', c_k''>c_k'.
\end{aligned}
\end{equation}
Now let us set
\begin{align*}
    \mathcal{R}:=&\{(u,v)\in\C^2\ |\ |u|<\sqrt{2R}, |v|<\sqrt{2R}\},\\
    \mathcal{R}':=&\{(u,v)\in\C^2\ |\ |u|<\sqrt{2R'}, |v|<\sqrt{2R'}\},\\
    \mathcal{R}'':=&\{(u,v)\in\C^2\ |\ |u|<\sqrt{2R''}, |v|<\sqrt{2R''}\},\\
    \mathcal{U}_j:=&\Log^{-1}(U_j),\, \mathcal{U}_j':=\Log^{-1}(U_j'),\,
    \mathcal{U}_j'':=\Log^{-1}(U_j''),
\end{align*}
and let $\tau:\mathcal{R}\rightarrow\R$, $\rho_j:\mathcal{U}_j\rightarrow\R$ be two smooth functions such that
$$
    \tau(u,v)
    \left\{
    \begin{array}{cc}
        =1 & \text{ if } (u,v)\in\mathcal{R}'' \\
        \in[0,1] & \text{ if } (u,v)\in\mathcal{R}'\backslash\mathcal{R}''\\
        =0 & \text{ if } (u,v)\in\mathcal{R}\backslash\mathcal{R}'\\
    \end{array}
    \right.,
$$
$$
    \rho_{j}(z)
    \left\{
    \begin{array}{cc}
        =1 & \text{ if } z\in\mathcal{U}_j'' \\
        \in[0,1] & \text{ if } z\in\mathcal{U}_j'\backslash\mathcal{U}_j''\\
        =0 & \text{ if } z\in\mathcal{U}_j\backslash\mathcal{U}_j'
    \end{array}
    \right.,
$$
and
$$
    \rho_{j,t}(z)=\rho_j(z^{1/\log t}),|d\tau|<C,|d\rho_j|<C
$$
for some constant $C$. We further require that for $x
\in U_j\cap F_{j,\trop}$, $\rho_j|_{\Log^{-1}((x+\R\cdot v_j)\cap (|\chi_j(x)|<c_j''))}$ is constant.\\
Now we set 
$$
    Y_\mu:=\{(u,v,z)\in\C^2\times(\C^*)^n\ |\ uv=1+t^{\lambda_j}z^{v_j}+(1-\mu\tau(u,v)\rho_{j,t}(z))\sum\limits_{k\neq j}t^{\lambda_k}z^{v_k},(u,v)\in\mathcal{R},\Log_t(z)\in U_j\}
$$
for $\mu\in[0,1]$. Then 
$$
    Y_0=Y^t\cap(\mathcal{R}\times\Log_t^{-1}(U_j))
$$ 
and
$$
    Y_1=(uv=W_{\Sigma,j}(u,v,z))\cap(\mathcal{R}\times\Log_t^{-1}(U_j)),
$$
with 
\begin{equation}
\label{eq_W_Sigma,j}
W_{\Sigma,j}(u,v,z)=W_{\Sigma}(z)-\tau(u,v)\rho_{j,t}(z)\sum_{k\neq j}t^{\lambda_k}z^{v_k}.
\end{equation} 
Then we can see that $Y_0=Y_j^t$. Let us set $Y_{j,\simple}^t=Y_1$, then 
$$
    Y^t_{j,\text{simple}}\cap(\mathcal{R}''\times\Log_t^{-1}(U_j''))=(uv=1+t^{\lambda_j}z^{v_j})\cap(\mathcal{R}''\times\Log_t^{-1}(U_j''))
$$
since $\tau(u,v)\rho_{j,t}(z)=1$ for any $(u,v,z)\in\mathcal{R}''\times\Log_t^{-1}(U_j'')$. Note that $\Log_t(\{z\,|\,(0,0,z)\in Y^t_{j,\text{simple}}\})=\Log_t(\{z\,|\,W_{\Sigma,j}(0,0,z)=0\})$, which is contained in $|\chi_j(x)|<c_j''$ since $\rho_{j,t}(z)\in[0,1]$ and $c_j''>b_j$. So for a point $x\in U_j\cap F_{j,\trop}$,
\begin{align*}
&(x+\R\cdot v_j)\cap\Log_t(\{z\,|\,(0,0,z)\in Y^t_{j,\text{simple}}\})\\
=&(x+\R\cdot v_j)\cap\Log_t(\{z\,|\,W_{\Sigma}(z)-\rho_{j,t}(z)\sum_{k\neq j}t^{\lambda_k}z^{v_k}=0\})\cap(|\chi_j(x)|<c_j'').
\end{align*}
Note that $\rho_j|_{\Log^{-1}((x+\R\cdot v_j)\cap (|\chi_j(x)|<c_j''))}$ is constant, so it is connected due to the choice of $V_j$.\\
Now we want to construct $\Phi_j$ using Lemma \ref{lemma_appxlemma1}. we need to check that $Y_\mu$ is smooth, symplectic and independent of $\mu$ away from a compact subset in $\mathcal{R}\times\Log_t^{-1}(U_j)$.
Let $v_{j,1}\neq0$ without loss of generality, we choose $|t|$ small enough such that 
$$
    (p-1)t^{\min\limits_{k\neq j}\{c_k\}-c_j}<\min\{1,\frac{\sqrt{2}}{C},\frac{1}{2\max\limits_{k\neq j}\{|v_{k,1}v_{j,1}^{-1}|\}},\frac{|v_{j,1}|}{2Ce^{\max\limits_{x\in U_j}\{x_1\}}}\}.
$$\\
For smoothness, when $|u|\in(\sqrt{2R''},\sqrt{2R}]$, we have the $dv$-term to be 
$$
    udv+\mu\rho_{j,t}(z)(\sum\limits_{k\neq j}t^{\lambda_k}z^{v_k})\frac{\partial\tau(u,v)}{\partial v}dv\neq0
$$
since $|u|>\sqrt{2R''}$ and $|\mu\rho_{j,t}(z)(\sum\limits_{k\neq j}t^{\lambda_k}z^{v_k})\frac{\partial\tau(u,v)}{\partial v}|<(p-1)t^{\min\limits_{k\neq j}\{c_k\}}C<\sqrt{2}<\sqrt{2R''}$. When $(u,v)\in\mathcal{R}'$ and $u\neq0$, we have the $dv$-term to be 
$$
    udv+\mu\rho_{j,t}(z)(\sum\limits_{k\neq j}t^{\lambda_k}z^{v_k})\frac{\partial\tau(u,v)}{\partial v}dv\neq0
$$
since $\tau(u,v)\equiv1$ for $(u,v)\in\mathcal{R}'$ and $u\neq0$.
When $u=v=0$, we have the $dz_1$-term paired with the tangent vector $v_{j,1}^{-1}(t^{-\lambda_j}z^{-v_j})z_1\frac{\partial}{\partial z_1}$ to be 
\begin{align*}
    &\big(\frac{\partial(t^{\lambda_j}z^{v_j})}{\partial z_1}-\mu\frac{\partial\rho_{j,t}(z)}{\partial z_1}(\sum\limits_{k\neq j}t^{\lambda_k}z^{v_k})+(1-\mu\rho_{j,t}(z))\frac{\partial (\sum\limits_{k\neq j}t^{\lambda_k}z^{v_k})}{\partial z_1}\big)dz_1(v_{j,1}^{-1}(t^{-\lambda_j}z^{-v_j})z_1\frac{\partial}{\partial z_1})\\
    =&1+(1-\mu\rho_{j,t}(z))\sum\limits_{k\neq j}v_{j,1}^{-1}\big(v_{k,1}(t^{\lambda_k-\lambda_j}z^{v_k-v_j})\big)-\mu\frac{1}{\log t}\frac{\partial\rho_{j}(z^{1/\log t})}{\partial z_1}z_1^{1/\log t}v_{j,1}^{-1}\big(\sum\limits_{k\neq j}t^{\lambda_k-\lambda_j}z^{v_k-v_j})\\
    \neq&0
\end{align*}
since 
$$
    |(1-\mu\rho_{j,t}(z))\sum\limits_{k\neq j}v_{j,1}^{-1}\big(v_{k,1}(t^{\lambda_k-\lambda_j}z^{v_k-v_j})\big)|<(p-1)\max\limits_{k\neq j}\{|v_{k,1}v_{j,1}^{-1}|\}t^{\min\limits_{k\neq j}\{c_k\}-c_j}<\frac{1}{2}
$$
and 
$$
    |\mu\frac{1}{\log t}\frac{\partial\rho_{j}(z^{1/\log t})}{\partial z_1}z_1^{1/\log t}v_{j,1}^{-1}\big(\sum\limits_{k\neq j}t^{\lambda_k-\lambda_j}z^{v_k-v_j})|<C(p-1)e^{\max\limits_{x\in U_j}\{x_1\}}|v_{j,1}|^{-1}t^{\min\limits_{k\neq j}\{c_k\}-c_j}<\frac{1}{2}.
$$\\
For symplecticity, when $\tau(u,v)\rho_{j,t}(z)$ is constant in a neighbourhood of a point, $Y_\mu$ is symplectic at that point since $Y_\mu$ is determined by a holomorphic function in a neighbourhood of that point. When $\tau(u,v)\rho_{j,t}(z)$ is non-constant in a neighbourhood of the point $(u,v,z)$, i.e., $(u,v)\in\mathcal{R}'\backslash\mathcal{R}''$ or $(u,v)\in\mathcal{R}''$ and $z\in\Log_t^{-1}(U_j'\backslash U_j'')$, it is sufficient to check that the norm of $vdu+udv-d(W_\Sigma(z))$ dominates the norm of $\mu d(\tau(u,v)\rho_{j,t}(z)(\sum_{k\neq j}t^{\lambda_k}z^{v_k}))$ with the norms of $du,dv,d\log z_1,\dots,d\log z_n$ taken to be $1$. When $(u,v)\in\mathcal{R}'\backslash\mathcal{R}''$,
$$
    \parallel vdu+udv-dW_\Sigma(z)\parallel \text{at least has }O(1)-\text{order},
$$
while
\begin{align*}
    &\parallel-\mu d(\tau(u,v)\rho_{j,t}(z)(\sum\limits_{k\neq j}t^{\lambda_k}z^{v_k}))\parallel\\
    =&\mu\parallel\rho_{j,t}(z)(\sum\limits_{k\neq j}t^{\lambda_k}z^{v_k})d\tau(u,v)+\tau(u,v)(\sum\limits_{k\neq j}t^{\lambda_k}z^{v_k})d\rho_{j,t}(z)+\tau(u,v)\rho_{j,t}(z)d(\sum\limits_{k\neq j}t^{\lambda_k}z^{v_k})\parallel\\
    &\text{at most has }O(t^{\min\limits_{k\neq j}\{c_k\}})-\text{order}.
\end{align*}
When $(u,v)\in\mathcal{R}''$ and $z\in\Log_t^{-1}(U_j'\backslash U_j'')$, we have that
$$
    \parallel vdu+udv-dW_\Sigma(z)\parallel \text{at least has }O(t^{c_j})-\text{order},
$$
since $||(v_{j,1}t^{\lambda_j}z^{v_j}+\sum\limits_{k\neq j}v_{k,1}t^{\lambda_k}z^{v_k})d\log z_1||$ at least has $O(t^{c_j})$-order,
while
\begin{align*}
    &\parallel-\mu d(\tau(u,v)\rho_{j,t}(z)(\sum\limits_{k\neq j}t^{\lambda_k}z^{v_k}))\parallel\\
    =&\mu\parallel\rho_{j,t}(\sum\limits_{k\neq j}t^{\lambda_k}z^{v_k})d\tau(u,v)+\tau(u,v)(\sum\limits_{k\neq j}t^{\lambda_k}z^{v_k})d\rho_{j,t}(z)+\tau(u,v)\rho_{j,t}(z)d(\sum\limits_{k\neq j}t^{\lambda_k}z^{v_k})\parallel\\
    &\text{at most has }O(t^{\min\limits_{k\neq j}\{c_k\}})-\text{order}.
\end{align*}
So $Y_\mu$ are symplectic submanifolds. We also have that $Y_\mu$ are independent of $\mu$ away from a compact subset since $\tau(u,v)\rho_{j,t}(z)=0$ on $\mathcal{R}\times\Log_t^{-1}(U_j)\backslash\mathcal{R}'\times\Log_t^{-1}(U_j’)$. Thus $Y_\mu$ satisfy the conditions of Lemma \ref{lemma_appxlemma1} and we get the symplectomorphism $\Phi_j$.
\end{proof}

\bibliographystyle{alphaurl}
\bibliography{The_Gamma_Conj_for_Tropical_Curves}

\begin{thebibliography}{CKYZ99}

\bibitem[AGIS20]{A-G-I-S_2018}
Mohammed Abouzaid, Sheel Ganatra, Hiroshi Iritani, and Nick Sheridan.
\newblock The gamma and strominger--yau--zaslow conjectures: a tropical
  approach to periods.
\newblock {\em Geometry \& Topology}, 24(5):2547--2602, 2020.

\bibitem[Ber75]{Bernstein_1975}
David~N Bernstein.
\newblock The number of roots of a system of equations.
\newblock {\em Functional Analysis and its applications}, 9(3):183--185, 1975.

\bibitem[CKYZ99]{C-K-Y-Z_1999}
T.-M. Chiang, A.~Klemm, S.-T. Yau, and E.~Zaslow.
\newblock Local mirror symmetry: Calculations and interpretations.
\newblock {\em Advances in Theoretical and Mathematical Physics},
  3(3):495–565, 1999.
\newblock URL: \url{http://dx.doi.org/10.4310/ATMP.1999.v3.n3.a3}, \href
  {https://doi.org/10.4310/atmp.1999.v3.n3.a3}
  {\path{doi:10.4310/atmp.1999.v3.n3.a3}}.

\bibitem[CLL12]{C-L-L_2012}
Kwokwai Chan, Siu-Cheong Lau, and Naichung~Conan Leung.
\newblock Syz mirror symmetry for toric calabi-yau manifolds.
\newblock {\em Journal of Differential Geometry}, 90(2):177--250, 2012.

\bibitem[CLS11]{Cox_2011}
David~A. Cox, John~B. Little, and Henry~K. Schenck.
\newblock {\em Toric varieties}.
\newblock American Mathematical Soc., 2011.

\bibitem[CO06]{Cho-Oh_2006}
Cheol-Hyun Cho and Yong-Geun Oh.
\newblock Floer cohomology and disc instantons of lagrangian torus fibers in
  fano toric manifolds.
\newblock {\em Asian Journal of Mathematics}, 10(4):773--814, 2006.

\bibitem[Ewa96]{Ewald_1996}
G{\"u}nter Ewald.
\newblock {\em Combinatorial convexity and algebraic geometry}, volume 168.
\newblock Springer Science \& Business Media, 1996.

\bibitem[Fan18]{Fang_2018}
Bohan Fang.
\newblock Central charges of t-dual branes for toric varieties.
\newblock {\em Transactions of the American Mathematical Society}, page~1, Oct
  2018.
\newblock URL: \url{http://dx.doi.org/10.1090/tran/7734}, \href
  {https://doi.org/10.1090/tran/7734} {\path{doi:10.1090/tran/7734}}.

\bibitem[FZ19]{Fang-Zhou_2019}
Bohan Fang and Peng Zhou.
\newblock Gamma ii for toric varieties from integrals on t-dual branes and
  homological mirror symmetry, 2019.
\newblock \href {http://arxiv.org/abs/1903.05300} {\path{arXiv:1903.05300}}.

\bibitem[GGI16]{G-G-I_2016}
Sergey Galkin, Vasily Golyshev, and Hiroshi Iritani.
\newblock Gamma classes and quantum cohomology of fano manifolds: Gamma
  conjectures.
\newblock {\em Duke Math. J.}, 165(11):2005--2077, 08 2016.
\newblock \href {https://doi.org/10.1215/00127094-3476593}
  {\path{doi:10.1215/00127094-3476593}}.

\bibitem[GM18]{Gross-Matessi_2018}
Mark Gross and Diego Matessi.
\newblock On homological mirror symmetry of toric calabi--yau threefolds.
\newblock {\em Journal of Symplectic Geometry}, 16(5):1249--1349, 2018.

\bibitem[GRZ22]{G-R-Z_2022}
Tim Gr{\"a}fnitz, Helge Ruddat, and Eric Zaslow.
\newblock The proper landau-ginzburg potential is the open mirror map.
\newblock {\em arXiv preprint arXiv:2204.12249}, 2022.

\bibitem[GS14]{Gross-Siebert_2014}
Mark {Gross} and Bernd {Siebert}.
\newblock {Local mirror symmetry in the tropics}.
\newblock {\em arXiv e-prints}, page arXiv:1404.3585, April 2014.
\newblock \href {http://arxiv.org/abs/1404.3585} {\path{arXiv:1404.3585}}.

\bibitem[Hic21]{Hicks_2021}
Jeffrey Hicks.
\newblock Tropical lagrangians in toric del-pezzo surfaces.
\newblock {\em Selecta Mathematica}, 27(1):1--50, 2021.

\bibitem[HIV00]{H-I-V_2000}
Kentaro Hori, Amer Iqbal, and Cumrun Vafa.
\newblock D-branes and mirror symmetry.
\newblock {\em arXiv preprint hep-th/0005247}, 2000.

\bibitem[{Hos}04]{Hosono_2004}
Shinobu {Hosono}.
\newblock {Central charges, symplectic forms, and hypergeometric series in
  local mirror symmetry}.
\newblock {\em arXiv e-prints}, pages hep--th/0404043, April 2004.
\newblock \href {http://arxiv.org/abs/hep-th/0404043}
  {\path{arXiv:hep-th/0404043}}.

\bibitem[Iri09]{Iritani_2009}
Hiroshi Iritani.
\newblock An integral structure in quantum cohomology and mirror symmetry for
  toric orbifolds.
\newblock {\em Advances in Mathematics}, 222(3):1016 -- 1079, 2009.
\newblock URL:
  \url{http://www.sciencedirect.com/science/article/pii/S0001870809001595},
  \href {https://doi.org/https://doi.org/10.1016/j.aim.2009.05.016}
  {\path{doi:https://doi.org/10.1016/j.aim.2009.05.016}}.

\bibitem[Lau14]{Lau_2014}
Siu-Cheong Lau.
\newblock Gross-siebert's slab functions and open gw invariants for toric
  calabi-yau manifolds.
\newblock {\em arXiv preprint arXiv:1405.3863}, 2014.

\bibitem[{Mat}18]{Matessi_2018}
Diego {Matessi}.
\newblock {Lagrangian submanifolds from tropical hypersurfaces}.
\newblock {\em arXiv e-prints}, page arXiv:1804.01469, April 2018.
\newblock \href {http://arxiv.org/abs/1804.01469} {\path{arXiv:1804.01469}}.

\bibitem[Mik04]{Mikhalkin_2004}
Grigory Mikhalkin.
\newblock Decomposition into pairs-of-pants for complex algebraic
  hypersurfaces.
\newblock {\em Topology}, 43(5):1035--1065, 2004.

\bibitem[Mik19]{Mikhalkin_2019}
Grigory Mikhalkin.
\newblock Examples of tropical-to-lagrangian correspondence.
\newblock {\em European Journal of Mathematics}, 5(3):1033–1066, Feb 2019.
\newblock URL: \url{http://dx.doi.org/10.1007/s40879-019-00319-6}, \href
  {https://doi.org/10.1007/s40879-019-00319-6}
  {\path{doi:10.1007/s40879-019-00319-6}}.

\bibitem[MR20]{Mak-Ruddat_2020}
Cheuk~Yu Mak and Helge Ruddat.
\newblock Tropically constructed lagrangians in mirror quintic threefolds.
\newblock {\em Forum of Mathematics, Sigma}, 8:e58, 2020.
\newblock \href {https://doi.org/10.1017/fms.2020.54}
  {\path{doi:10.1017/fms.2020.54}}.

\bibitem[MS15]{M-S_2015}
Diane Maclagan and Bernd Sturmfels.
\newblock {\em Introduction to {T}ropical {G}eometry}, volume 161 of {\em
  Graduate Studies in Mathematics}.
\newblock American Mathematical Society, Providence, RI, 2015.

\bibitem[RS19]{Ruddat-Siebert_2019}
Helge {Ruddat} and Bernd {Siebert}.
\newblock {Period integrals from wall structures via tropical cycles, canonical
  coordinates in mirror symmetry and analyticity of toric degenerations}.
\newblock {\em arXiv e-prints}, page arXiv:1907.03794, July 2019.
\newblock \href {http://arxiv.org/abs/1907.03794} {\path{arXiv:1907.03794}}.

\bibitem[SYZ96]{S-Y-Z_1996}
Andrew {Strominger}, Shing-Tung {Yau}, and Eric {Zaslow}.
\newblock {Mirror symmetry is T-duality}.
\newblock {\em Nuclear Physics B}, 479(1):243--259, February 1996.
\newblock \href {http://arxiv.org/abs/hep-th/9606040}
  {\path{arXiv:hep-th/9606040}}, \href
  {https://doi.org/10.1016/0550-3213(96)00434-8}
  {\path{doi:10.1016/0550-3213(96)00434-8}}.

\end{thebibliography}

\end{document}